\theoremstyle{plain}
\newtheorem{teo}{Theorem}[section]
\newtheorem{cor}[teo]{Corollary}
\newtheorem{lemma}[teo]{Lemma}
\newtheorem{prop}[teo]{Proposition}
\theoremstyle{definition}
\newtheorem{df}[teo]{Definition}
\theoremstyle{remark}
\newtheorem{obs}[teo]{Remark}
\newtheorem{ejp}[teo]{Example}
\DeclareMathOperator{\diam}{diam}
\DeclareMathOperator{\clos}{clos}
\DeclareMathOperator{\dist}{dist} 
\newcommand{\reparam}{\mathcal{H}_0^+(\R)}
\newcommand{\fix}{\hbox{Sing}}
\newcommand{\sing}{\fix}
\newcommand{\R}    {\mathbb R}
\newcommand{\Q}    {\mathbb Q}
\newcommand{\Z}  {\mathbb Z}
\newcommand{\N}  {\mathbb N}
\renewcommand{\epsilon}{\varepsilon}
\begin{document}

\author[A. Artigue]{Alfonso Artigue}
\address{Regional Norte, Universidad de la Rep\'ublica, Gral. Rivera 1530, Salto, Uruguay}
\email{aartigue@fing.edu.uy}
\title[Expansive Flows of Surfaces]{Expansive Flows of Surfaces}
\date{\today}

\begin{abstract}
We prove that a flow on a compact surface is expansive if and only if the singularities are of saddle type and the union of their separatrices is dense.
Moreover we show that such flows are obtained by surgery on the suspension of minimal interval exchange maps.
\end{abstract}
\maketitle

\section*{Introduction}

\par Expansive homeomorphisms on surfaces are known to be conjugated to pseudo-Anosov maps \cites{H,L}. 
In particular the stable leaves form a minimal measured foliation.
Here we study flows on compact surfaces that are expansive in the sense of \cite{K}. We show a strong relation between
the orbit structure of expansive flows and the stable foliation of a pseudo-Anosov maps. 
In the lack of saddle connections we prove that expansive flows are suspensions of minimal interval exchange maps. Therefore
the orbits of said flows
make a minimal measured foliation.

\par The only published work on
the subject of expansive flows of surfaces, known to the author, is \cite{LG}. In our context their result means
that expansive flows on surfaces must present singular (i.e. equilibrium)
points. 

\par The main ideas of the present article arise from relating expansive dynamics,
polygonal billiards and singular flows. The expansive properties of polygonal
billiards were established in \cite{GKT}. In this article it is shown that excluding
periodic orbits the collision map is expansive and two close non-periodic points are separated
after visiting different sides of the polygon. The link between
flows with singularities and polygonal billiards is given in \cite{ZK}. 
In said work a flow on a compact surface is constructed from any given rational polygonal billiard and fixed direction.
The equilibrium points of the flow are singularities of 
saddle type associated with the corners of the polygon.

\par Our main result is Theorem \ref{charexpsup}. It states that a flow on a
compact surface, in the absence of singularities of saddle type of index 0, 
is expansive if and only if the singularities
are of saddle type and the union of their
separatrices is dense in the surface. Roughly speaking the proof is the following.
Take two points in different local orbits, 
consider a separatrix between them and follow them until the end of the separatrix. The separation 
occurs because the singularity is of saddle type of negative index. To prove the converse the key step is to show that
there are no wandering points. The rest of the argument is quite standard. 

\subsection*{Description of the content.}
\par Section 1. We introduce the definition of expansive flow and show 
the equivalence with the definition of $k^*$-expansive given in \cite{K}.
\par Section 2. We prove that expansive flows on surfaces do not have wandering points.
\par Section 3. We show that the singular points of expansive flows are of saddle type. 
Also we show how this kind of singularities helps to the expansive properties of the flow.
\par Section 4. We show that expansive flows are Cherry flows (in the sense of \cite{Gutierre}) 
and as a consequence we conclude that expansive flows are smoothable.
Previously we prove that expansive flows on surfaces do not have periodic orbits and have singular points, moreover
the union of their separatrices is dense in the surface.
\par Section 5. 
We show that expansiveness is invariant under adding and removing singularities of index 0.
\par Section 6. 
We give characterizations of expansive flows on surfaces. In the absence
of singularities of index 0 it is shown that expansiveness is equivalent with
(1) the singularities are of saddle type and the union of their separatrices is dense and
(2) there is a finite and positive number of singularities, $\Omega(\phi)=S$ and there are no periodic orbits.
Also we show that the surfaces that admits expansive flows are: 
the torus with $b$ boundaries, $h$ handles and $c$ cross-cups with $b+h+c>0$.
In particular the torus do not admit expansive flows.
\par Section 7. 
We show that the suspension of an interval exchange map $f$ is an 
expansive flow on a surface $S$ if and only if $S$ is not the torus and $f$ has not periodic points.
\par Section 8. In the discrete case it is known that the interval and the circle do not admit
expansive homeomorphisms (see \cite{JU}). Nevertheless it
is easy to prove that expansive flows on those spaces are the ones 
with a finite number of singular points. 
In particular this shows that expansive flows can posses saddle connections. 
In fact, Example \ref{bitoroexp} shows that there exist  
expansive flows with cycles of saddle connections that disconnect the surface. 
In Theorem \ref{structure} we give a procedure to construct 
every expansive flow on a compact surface, the procedure is: 
suspend some minimal interval exchange maps, add singularities of index 0, make cuts along saddle connections and glue them.
This procedure will give and expansive flow if and only if no connected component
of the surface obtained is the torus.
\par Section 9. As an application and a source of examples of expansive flows on surfaces we consider rational polygonal billiards.
As explained in \cite{ZK} such billiard flows can be seen on a compact surface and fixing a direction $v\in\R^2$ a 
flow is determined presenting singularities of saddle type in the corners. 
Theorem \ref{billar} states that the the associated flow is expansive if and only if 
there are no periodic orbits with 
initial direction $v$ in the billiard.

\section{Expansive flows}
In this section we introduce the definition of expansive flow.
Let $(X,\dist)$ be a compact metric space and
$\phi\colon \R\times X\to X$ be a continuous flow. 
A point $p\in X$ is said to be a \emph{singularity} of $\phi$ if $\phi_t(p)=p$ for all $t\in\R$. In other case it is called \emph{regular}.
The set of singularities is denoted by $\sing$.

\par It is easy to show that if $x$ is a regular point then 
for all $\delta>0$ there exist $s>0$ such that $y=\phi_s(x)\neq x$ and
$\dist(\phi_t(x),\phi_t(y))<\delta$ for all $t\in\R$.
Roughly speaking it means that 
two points in a small orbit segment contradict the expansiveness of the flow. So every definition of \emph{expansive flow} 
must consider this fact.
In \cite{BW} it 
is done in such a way that singular points must be isolated points of 
the space, therefore no manifold of positive dimension admits an expansive flows (in the sense of \cite{BW}) with singular points. 
In \cite{K} a definition is given in order to study expansive properties of the Lorenz attractor.
They used the term $k^*$-\emph{expansive}. 
Of course it is weaker than the definition in \cite{BW}, but just because it 
allows singular points. Our definition of expansive flow is equivalent with \cite{K} (as we show 
in Theorem \ref{kequiv}) but seems to be more insightful. 

\par First we define
another distance in $X$ that allow us to say in a formal way that two points are locally in the same orbit. Let
$$\dist_\phi(x,y)=\inf\{\diam(\phi_{[a,b]}(z)):z\in X, [a,b]\subset\R, x,y\in\phi_{[a,b]}(z)\}$$
if $y\in\phi_\R(x)$ and $\dist_\phi(x,y)=
\diam(X)$ if $y\notin\phi_\R(x)$.
Consider
$$\beta_0=\inf\{\diam(\phi_\R(x)):x\notin\sing\}.$$ 
Notice that $\dist_\phi(x,y)<\beta_0$ if and only if 
$x$ and $y$ are in an orbit segment of diameter less than $\beta_0$.
Let $\reparam$ be the set of increasing homeomorphisms $h\colon\R\to\R$
such that $h(0)=0$.

\begin{df}\label{expflow}
We say that $\phi$ is \emph{expansive} if for all $\beta>0$ there
exists an \emph{expansive constant} $\delta>0$ such that if
$\dist(\phi_{h(t)}(x),\phi_t(y))<\delta$ for all $t\in\R$ and some
$h\in\reparam$ then $\dist_\phi(x,y)<\beta$.
\end{df}

\par If an expansive flow presents a singularity $p$ that is not an isolated point of $X$, it is easy to see that
there exist $t_n\to\infty$ and $x_n\to p$, $x_n\neq p$, such that 
$\dist_\phi(x_n,\phi_{t_n}(x_n))\to 0$. That simple remark is the essential difference between our definition of
expansive flow and the one given in \cite{BW}.
\par In \cite{BW} (Theorem 3) they give four equivalent definitions of expansive flows. In fact it is not necessary
to assume that the flow do not present singular points in this Theorem, that is because each item implies that
the set of singular points is an isolated finite set (see \cite{Oka}). 
Our Definition \ref{expflow} 
above is the combination of items (ii) and (iii) 
of this Theorem 
suggested in \cite{BW} after its proof.
It may not be equivalent
in the presence of equilibrium points. In fact they are equivalent if and only if $\sing$ is an isolated set of the space.
In Theorem \ref{kequiv} we show that Definition \ref{expflow} coincides with the notion of $k^*$-\emph{expansive}
introduced in \cite{K}.

\begin{lemma}\label{lemakequiv}
Suppose that $\phi$ presents a finite number of singular points and
$\beta_0>0$. Then for all $\beta\in (0,\beta_0)$ there exist
$\delta>0$ such that if $\dist(\phi_{g(t)}(x),\phi_t(x))<\delta$
for all $t\in \R$ and a continuous function $g\colon\R\to\R$,
$g(0)=0$, then $\dist_\phi(\phi_{g(t)}(x),\phi_t(x)<\beta$ for all
$t\in\R$.
\end{lemma}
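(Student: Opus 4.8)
The plan is to argue by contradiction, after first reducing the statement to a purely topological separation property of the pseudo-distance $\dist_\phi$. The first step is the observation that, for fixed $x$ and a continuous $g$ with $g(0)=0$, the function $t\mapsto f(t):=\dist_\phi(\phi_{g(t)}(x),\phi_t(x))$ is continuous with $f(0)=0$: the two points always lie on the orbit of $x$, and on a single orbit $\dist_\phi$ is the diameter of the shortest connecting sub-arc, which depends continuously on its endpoints, so continuity of $g$ gives continuity of $f$. Hence, if the conclusion of the lemma failed we would have $f(t_1)\ge\beta$ for some $t_1$, and the intermediate value theorem would produce $t_2$ with $f(t_2)=\beta$; then the hypothesis $\dist(\phi_{g(t)}(x),\phi_t(x))<\delta$ evaluated at $t_2$ exhibits two points $y=\phi_{t_2}(x)$, $z=\phi_{g(t_2)}(x)$ on a common orbit with $\dist_\phi(y,z)=\beta$ and $\dist(y,z)<\delta$. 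Thus it is enough to prove that for every $\beta\in(0,\beta_0)$ there is $c>0$ with $\dist(y,z)\ge c$ whenever $y,z$ lie on a single orbit and $\dist_\phi(y,z)=\beta$, and then to put $\delta=c$.

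This separation statement I would prove by a second contradiction and compactness. Suppose $\dist(y_n,z_n)\to0$ while $\dist_\phi(y_n,z_n)=\beta$ with $y_n,z_n$ on one orbit; choose a parameter $\ell_n$ with $\phi_{\ell_n}(y_n)=z_n$ realizing $\dist_\phi(y_n,z_n)$, that is $\diam\phi_{[0,\ell_n]}(y_n)=\beta$ (the infimum is attained; after passing to a subsequence and reversing the flow, $\ell_n\ge0$), and also assume $y_n\to y$, so $z_n\to y$. If $(\ell_n)$ stays bounded, $\ell_n\to\ell_*$, then $\phi_{\ell_*}(y)=y$. When $\ell_*=0$, or $y\in\sing$, the segments $\phi_{[0,\ell_n]}(y_n)$ converge to a point and $\diam\phi_{[0,\ell_n]}(y_n)\to0\ne\beta$ by uniform continuity of the flow, a contradiction. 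When $\ell_*>0$ and $y$ is regular, $y$ is periodic of minimal period $T$ with $\ell_*$ a positive multiple of $T$, so $\phi_{[0,\ell_n]}(y_n)$ converges to the whole orbit of $y$ and $\diam\phi_{[0,\ell_n]}(y_n)\to\diam\phi_\R(y)\ge\beta_0>\beta$, again a contradiction.

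The remaining case $\ell_n\to\infty$ is the crux, and here the hypotheses that $\sing$ is finite and $\beta_0>0$ are used essentially. Let $K$ be a Hausdorff limit (along a subsequence) of the continua $\overline{\phi_{[0,\ell_n]}(y_n)}$; then $K$ is a continuum with $\diam K=\beta<\beta_0$. Since both endpoints $y_n$ and $z_n=\phi_{\ell_n}(y_n)$ converge to $y$, one checks that $\phi_\R(y)\subseteq K$, hence $\diam\phi_\R(y)<\beta_0$ and $y\in\sing$. Any $p\in K$ is a limit of points $\phi_{\tau_n}(y_n)$ with $\tau_n\in[0,\ell_n]$; if $\tau_n$ or $\ell_n-\tau_n$ has a bounded subsequence then $p=y\in\sing$, and otherwise $\tau_n\to\infty$ and $\ell_n-\tau_n\to\infty$, and the same reasoning gives $\phi_\R(p)\subseteq K$, so $p\in\sing$. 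Therefore $K\subseteq\sing$; but $K$ is connected and $\sing$ is finite, so $K$ is a single point, whence $\diam K=0\ne\beta$, a contradiction. I expect this long-time-shift case — roughly, that a long orbit segment whose endpoints are close must either have negligible diameter or diameter at least $\beta_0$ — to be the only delicate point; the two reductions preceding it are routine.
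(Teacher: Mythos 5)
Your proposal is correct and follows essentially the same route as the paper: reduce via the intermediate value theorem to points at exact $\dist_\phi$-distance $\beta$, split on whether the connecting times stay bounded, and in the unbounded case pass to a Hausdorff-limit continuum $K$ of diameter $\beta$ on which finiteness of $\sing$ and $\beta_0>0$ yield the contradiction. The only (cosmetic) difference is the endgame: the paper exhibits a regular point of the invariant infinite continuum $K$ whose orbit has diameter at most $\beta<\beta_0$, while you show every point of $K$ is singular so that $K$ degenerates to a point, contradicting $\diam K=\beta>0$.
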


\begin{proof}
By contradiction suppose that there exist $\beta\in(0,\beta_0)$,
$\delta_n\to 0$, $x_n$ and continuous functions $g_n\colon\R\to\R$ such that
\begin{equation}\label{ecuexp}
    \dist(\phi_{g_n(t)}(x_n),\phi_t(x_n))<\delta_n
\end{equation}
for all $t\in\R$, for all $n\in\N$, $g_n(0)=0$ and also suppose that there exists $t'_n$ such that
$$\dist_\phi(\phi_{g_n(t'_n)}(x_n),\phi_{t'_n}(x_n))>\beta$$. Since
$\phi_{g_n(0)}(x_n)=\phi_{0}(x_n)$ there exist $t_n\in(0,t'_n)$
such that
$$\dist_\phi(\phi_{g_n(t_n)}(x_n),\phi_{t_n}(x_n))=\beta$$
Let $a_n=\phi_{g_n(t_n)}(x_n)$ and $b_n=\phi_{t_n}(x_n)$. By
equation (\ref{ecuexp}) we have that $\dist(a_n,b_n)<\delta_n\to
0$. Then we can suppose that $a_n,b_n\to c$. Also assume that $\phi_{s_n}(a_n)=b_n$ with $s_n>0$ and
$\diam(\phi_{[0,s_n]}(a_n))=\beta$. If $\{s_n\}_{n\in\N}$ is
bounded it is easy to prove that $c$ is a periodic point and
$\diam\phi_\R(c)=\beta$, contradicting
$\beta<\beta_0$. If $\{s_n\}_{n\in\N}$ is not bounded, eventually taking a subsequence, we have 
that $s_n\to +\infty$. Also we can suppose that
$\phi_{[0,s_n]}(a_n)$ converges to a compact set $K\subset X$ considering the
Hausdorff distance\footnote{If $A,B\subset X$ are compact sets the
Hausdorff distance between $A$ and $B$ is
$d_H(A,B)=\max\{\sup_{a\in A}\inf_{b\in B} \dist(a,b),\sup_{b\in
B}\inf_{a\in A} \dist(a,b)\}$. } between compact subsets of
$X$. Since $a_n,\phi_{s_n}(a_n)\to c$, it is easy to see that $K$ is an invariant set.
Also we have that $\diam(K)=\beta$ and $K$ is connected, therefore $K$ is an infinite set.
By hypothesis, the flow
presents a finite number of singularities and then there is a regular point in
$K$ whose orbit has diameter less or equal than $\beta$,
again the contradiction is that $\beta<\beta_0$.
\end{proof}

\begin{teo}\label{kequiv}
The following statements are equivalent:
\begin{enumerate}
 \item $\phi$ is expansive,
\item $\phi$ is $k^*$-expansive, i.e. for all $\epsilon>0$
there exist an expansive constant $\delta>0$ such that if
$\dist(\phi_{h(t)}(x),\phi_t(y))<\delta$ for all $t\in\R$ and some
$h\in\reparam$ then there exist $s,t_0\in\R$ such that
$|s|<\epsilon$ and $\phi_{h(t_0)}(x)=\phi_{t_0+s}(y)$.

\end{enumerate}
  
\end{teo}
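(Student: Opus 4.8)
The plan is to prove the two implications after a preliminary remark that makes Lemma \ref{lemakequiv} applicable, namely that \emph{each} of (1) and (2) forces $\sing$ to be finite and $\beta_0>0$ (so we may assume $\diam(X)>0$, the opposite case being trivial). If $\sing$ had an accumulation point $p$, pick singular $p_n\to p$ with $p_n\neq p$ and feed the pair $(p_n,p)$ with $h=\mathrm{id}$ into Definition \ref{expflow} (constant for $\beta=\diam(X)/2$) or into (2) (constant for some small $\epsilon$): since $\dist(\phi_t(p_n),\phi_t(p))=\dist(p_n,p)\to 0$, for large $n$ we get either $\dist_\phi(p_n,p)<\diam(X)/2$ or $\phi_{h(t_0)}(p_n)=\phi_{t_0+s}(p)$, both of which say $p_n=p$. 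Likewise, if there were regular $z_n$ with $\diam\phi_\R(z_n)\to 0$, then after a subsequence the orbits $\phi_\R(z_n)$ are pairwise distinct (a fixed regular orbit has fixed positive diameter) and $z_n$ converges; for $n\neq m$ large the pair $(z_n,z_m)$ with $h=\mathrm{id}$ stays within $\diam\phi_\R(z_n)+\diam\phi_\R(z_m)+\dist(z_n,z_m)\to 0$, and the same two conclusions force $z_m\in\phi_\R(z_n)$, a contradiction.

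For $(2)\Rightarrow(1)$ fix $\beta>0$, which we may take $<\beta_0$, and let $\omega(r)=\sup\{\diam\phi_{[0,r]}(z):z\in X\}$, so $\omega(r)\to 0$. Let $\delta_1$ be the constant from Lemma \ref{lemakequiv} for the value $\beta/3$, choose $\epsilon$ with $\omega(\epsilon)<\min(\beta/3,\delta_1/2)$, let $\delta_2$ be the $k^*$-constant for this $\epsilon$, and put $\delta=\min(\delta_1/2,\delta_2)$. If $\dist(\phi_{h(t)}(x),\phi_t(y))<\delta$ for all $t$, then (2) gives $|s|<\epsilon$ and $t_0$ with $w:=\phi_{h(t_0)}(x)=\phi_{t_0+s}(y)$, so $x,y$ lie on the orbit of $w$. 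For all $t$, $\dist(\phi_{h(t)}(x),\phi_{t+s}(y))\leq\delta+\omega(|s|)<\delta_1$; writing $\phi_{h(t)}(x)=\phi_{h(t)-h(t_0)}(w)$, $\phi_{t+s}(y)=\phi_{t-t_0}(w)$ and setting $t=r+t_0$ this reads $\dist(\phi_{g(r)}(w),\phi_r(w))<\delta_1$ for all $r$, where $g(r)=h(r+t_0)-h(t_0)$ is continuous with $g(0)=0$. Lemma \ref{lemakequiv} then yields $\dist_\phi(\phi_{g(r)}(w),\phi_r(w))<\beta/3$ for all $r$; at $r=-t_0$, where $\phi_{g(-t_0)}(w)=\phi_{-h(t_0)}(w)=x$ and $\phi_{-t_0}(w)=\phi_s(y)$, this gives $\dist_\phi(x,\phi_s(y))<\beta/3$. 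Since also $\dist_\phi(\phi_s(y),y)\leq\omega(|s|)<\beta/3$, and since $\dist_\phi$ obeys the triangle inequality for points on a common orbit once the two radii add up to less than $\beta_0$ (two orbit arcs through a shared point concatenate to an arc of diameter at most their sum), we conclude $\dist_\phi(x,y)<2\beta/3<\beta$.

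For $(1)\Rightarrow(2)$ fix $\epsilon>0$. Apply expansiveness with a target $\beta<\beta_0$ (to be chosen small) and constant $\delta$: if $\dist(\phi_{h(t)}(x),\phi_t(y))<\delta$ for all $t$ then $\dist_\phi(x,y)<\beta$, so $y\in\phi_\R(x)$, and if $x\in\sing$ then $x=y$ and we are done with $s=0$. If $x$ is regular, write $y=\phi_\gamma(x)$ with connecting arc $\phi_{[0,\gamma]}(x)$ of diameter $<\beta$; a crossing with $|s|<\epsilon$ is exactly a $t_0$ with $h(t_0)-t_0\in(\gamma-\epsilon,\gamma+\epsilon)$. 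Since $h(0)-0=0$, such $t_0$ exists whenever $|\gamma|<\epsilon$, or more generally whenever $\gamma$ lies in the image of $t\mapsto h(t)-t$ (an interval containing $0$) enlarged by $\epsilon$; after a harmless time reversal making $\gamma>0$, the only obstruction is $\gamma\geq\epsilon$ together with $h(t)-t\leq\gamma-\epsilon$ for all $t$. Arguing by contradiction along $\delta=\delta_n\to 0$ one then gets regular $x_n$, reparametrizations $h_n$, and $\gamma_n\geq\epsilon$ with $h_n(t)-t\leq\gamma_n-\epsilon$ for all $t$ and $\dist(\phi_{h_n(t)}(x_n),\phi_{t+\gamma_n}(x_n))<\delta_n$; moreover $\dist_\phi(x_n,\phi_{\gamma_n}(x_n))=\diam\phi_{[0,\gamma_n]}(x_n)\to 0$, whence $x_n\to x^*$ with $x^*\in\sing$. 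Putting $g_n(t)=h_n(t-\gamma_n)$ gives $g_n(t)\leq t-\epsilon$ for all $t$ and $\dist(\phi_{g_n(t)}(x_n),\phi_t(x_n))<\delta_n$; iterating, $\dist(\phi_{g_n^{(m)}(t)}(x_n),\phi_t(x_n))<m\delta_n$ with $g_n^{(m)}(t)\leq t-m\epsilon$. Re-basing any such relation at a point of the orbit of $x_n$ and invoking expansiveness shows that, over arbitrarily long time intervals, the orbit of $x_n$ is confined to an arbitrarily small neighbourhood of $x^*$, once $n$ is large and $m\delta_n$ small. As $x_n$ is regular and $\diam\phi_\R(x_n)\geq\beta_0>0$, the orbit must leave such a neighbourhood; following it out and passing to a limit produces behaviour incompatible with expansiveness (after the limit, either a regular orbit of diameter less than $\beta_0$, or two distinct orbits shadowing one another for all time), which is the contradiction sought.

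The hard part is exactly this closing step of $(1)\Rightarrow(2)$: unlike in the discrete case, $\dist_\phi(x,y)<\beta$ does \emph{not} force $y=\phi_s(x)$ with $|s|$ small, because an orbit may spend arbitrarily long near a singularity while the arc it traces there has tiny diameter, so the reparametrization is free to ``lag'' by $\geq\epsilon$. One must therefore use the full $\delta$-closeness hypothesis rather than merely $\dist_\phi(x,y)<\beta$, and carefully bootstrap — using $\beta_0>0$ — the length of the time window over which the orbit of $x_n$ is trapped near $x^*$ until it contradicts the uniform lower bound $\beta_0$ on the orbit diameter of a regular point. By comparison, $(2)\Rightarrow(1)$ and the preliminary finiteness and positivity statements are comparatively routine once Lemma \ref{lemakequiv} is in hand.
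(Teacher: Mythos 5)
Your preliminary reductions (finiteness of $\sing$, $\beta_0>0$) and your proof of $(2)\Rightarrow(1)$ are correct and essentially the paper's argument: find the crossing point $w$ via the $k^*$-constant, convert the shadowing into a self-shadowing of the orbit of $w$, apply Lemma \ref{lemakequiv}, and patch the time shift $s$ with a modulus-of-continuity estimate. The problem is $(1)\Rightarrow(2)$. You correctly isolate the obstruction (the reparametrization may ``lag'' by at least $\epsilon$ while the orbit lingers near a singularity), but the closing contradiction is never actually derived. The claim that the iterates $g_n^{(m)}$ confine the orbit of $x_n$ to a small neighbourhood of $x^*$ over long time windows does not follow from $\dist(\phi_{g_n^{(m)}(t)}(x_n),\phi_t(x_n))<m\delta_n$ alone (that inequality only compares points of the same orbit at different times, and is useless once $m\delta_n$ is no longer small), and the two alleged limiting pathologies are not contradictions: a ``regular orbit of diameter less than $\beta_0$'' cannot be produced without first proving what you are trying to prove, and ``two distinct orbits shadowing one another for all time'' is perfectly consistent with Definition \ref{expflow}, which only concludes $\dist_\phi<\beta$ --- a conclusion your shadowing pairs already satisfy since they lie on a single orbit. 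So the step you yourself flag as ``the hard part'' is a genuine gap.

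The paper closes this gap without any contradiction argument, by re-basing away from the singular set. Expansiveness implies that for $\gamma$ smaller than an expansive constant, every regular orbit leaves $B_\gamma(\fix)$. A compactness argument then produces, for the given $\epsilon$, a $\beta>0$ such that any orbit arc of diameter less than $\beta$ whose base point lies outside $B_\gamma(\fix)$ has time-length less than $\epsilon$. Now take $\delta$ an expansive constant for this $\beta$, choose $t_0$ with $\phi_{t_0}(x)\notin B_\gamma(\fix)$, and apply expansiveness to the time-translated pair: the resulting arc joining $\phi_{t_0}(x)$ to $\phi_{h(t_0)}(y)$ has diameter less than $\beta$ and is based at a point far from $\fix$, hence has time-length less than $\epsilon$, which is exactly the conclusion of (2) at that $t_0$. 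This shows your ``obstruction case'' simply cannot occur for small $\delta$; you should replace your limiting argument by this re-basing step.
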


\begin{proof}

Notice that in both cases the number of singular points is finite and
$\beta_0>0$ if $X$ is not a finite set. If $X$ is a finite set the
proof is trivial.

\par ($1\Rightarrow 2$) Expansiveness easily implies that if $\gamma>0$ is smaller than an expansive constant then for
all $x\notin \sing$ there exists $t\in\R$ such that $\phi_t(x)\notin
B_\gamma(\fix)$.
\par Fix $\epsilon>0$. Now we will show that there exist $\beta>0$ such that if
$\dist(x,\fix)\geq \gamma$ and $\diam(\phi_{[0,s]}(x))<\beta$ then
$|s|<\epsilon$. By contradiction, suppose that there exist $x_n\notin
B_\gamma(\fix)$ and $t_n>\epsilon$ such that
$\diam(\phi_{[0,t_n]}(x_n))\to 0$. Eventually taking a subsequence we can
suppose that $x_n\to z$. Notice that $z$ is a regular point and then there exists $\epsilon'\in(0,\epsilon)$
such that $\phi_{\epsilon'}(z)\neq z$. By the continuity of $\phi$
we have that $\phi_{\epsilon'}(x_n)$ converges to
$\phi_{\epsilon'}(z)$. This is a contradiction because 
$\diam(\phi_{[0,t_n]}(x_n))\to 0$.

\par For a value $\beta$
as in the previous paragraph there exist an expansive
constant $\delta$, by hypothesis. We will show that $\delta$ is an expansive constant for the value
$\epsilon$ fixed before. Suppose that
$\dist(\phi_{h(t)}(y),\phi_t(x))<\delta$ for all $t\in\R$, some
$x,y\in X$ and $h\in\reparam$. Without loss of generality we can suppose that $x\notin\fix$. Let
$t_0\in\R$ be such that $\phi_{t_0}(x)\notin B_\gamma(\fix)$. 
If we consider $h'\in\reparam$ given by $h'(t)=h(t+t_0)-h(t_0)$ we have that
\begin{gather*}
\dist(\phi_{h'(t)}(\phi_{h(t_0)}(y)),
\phi_t(\phi_{t_0}(x)))=\\
\dist(\phi_{h(t+t_0)-h(t_0)}(\phi_{h(t_0)}(y)),\phi_t(\phi_{t_0}(x)))=\\
\dist(\phi_{h(t+t_0)}(y),\phi_{t+t_0}(x))<\delta
\end{gather*}
for all $t\in \R$. We conclude by hypothesis that
$\dist_\phi(\phi_{h(t_0)}(y),\phi_{t_0}(x))<\beta$. Since
$\phi_{t_0}(x)\notin B_\gamma(\fix)$, there exists
$s\in(-\epsilon,\epsilon)$ such that
$\phi_{h(t_0)}(y)=\phi_{t_0+s}(x)$ and the proof ends.

\par ($2\Rightarrow 1$) Given any $\beta>0$, without loss of generality we suppose $\beta\in
(0,\beta_0)$. We apply Lemma \ref{lemakequiv} to some value
$\beta'\in(0,\beta)$ and we have the associated value $\delta'>0$.
Let $\delta''\in(0,\delta')$. By the continuity of the flow there
exists $\epsilon>0$ such that
\begin{equation}\label{epsilon1}
\hbox{if } \diam(\phi_{[a,b]}(x))<\beta' \hbox{ then }
\diam(\phi_{[a-\epsilon,b+\epsilon]}(x))<\beta
\end{equation}
Assuming that $\epsilon$ and $\delta''$ are sufficiently small we can suppose that
\begin{equation}\label{epsilon2}
\hbox{if }\dist(x,y)<\delta''\hbox{ then
}\dist(\phi_s(x),y)<\delta' \hbox{ for all }s\in(-\epsilon,\epsilon)
\end{equation}
By hypothesis there exists an expansive constant $\delta'''$
associated with $\epsilon$. We will show that any positive value
$\delta<\min\{\delta',\delta'',\delta'''\}$ is an expansive constant for the value $\beta$ fixed before. Suppose that
\begin{equation}\label{ecuexp2}
    \dist(\phi_{h(t)}(x),\phi_t(y))<\delta,\hbox{ for all }t\in\R \hbox{ and some } h\in\reparam
\end{equation}
Then by hypothesis there exist $s,t_0\in\R$ such that
$\phi_{h(t_0)}(x)=\phi_{t_0+s}(y)$ and $|s|<\epsilon$. Define
$z=\phi_{h(t_0)}(x)$ and $g(t)=h(t_0+t)-h(t_0)$. By
(\ref{ecuexp2}) we have that
$\dist(\phi_{g(t)}(z),\phi_{t-s}(z))<\delta$ for all $t\in \R$,
that is because
\[
\begin{array}{l}
\phi_{g(t)}(z)=\phi_{h(t_0+t)-h(t_0)}(\phi_{h(t_0)}(x))=\phi_{h(t_0+t)}(x) \hbox{ and }\\
\phi_{t-s}(z)= \phi_{t-s}(\phi_{h(t_0)}(x))=\phi_{t+t_0}(\phi_{h(t_0)-s-t_0}(x))=\phi_{t+t_0}(y)
\end{array}
\]
Since $|s|<\epsilon$ if we consider (\ref{epsilon2}) we have that
$\dist(\phi_{g(t)}(z),\phi_t(z))<\delta'$ for all $t\in\R$, because
$\delta<\delta''$. Then applying Lemma \ref{lemakequiv} we have that
$$\dist_\phi(\phi_{g(-t_0)}(z),\phi_{-t_0}(z))<\beta'$$ This
points are $x$ and $\phi_s(y)$ respectively. Finally, by condition (\ref{epsilon1})
 we have that
$\dist_\phi(x,y)<\beta$.
\end{proof}

\begin{obs}\label{coroexpkomuro}
 Notice that in the previous proof (2$\Rightarrow$1) we have shown the following result: 
if $\phi$ has a finite number of singularities and $\beta_0>0$ then for all $\beta>0$ there
exist $\epsilon,\delta>0$ such that if $\dist(\phi_{h(t)}(x),\phi_t(y))<\delta$ for all $t\in\R$ with $x,y\in X$, 
 $h\in\reparam$ and $\phi_{h(t_0)}(x)=\phi_{t_0+s}(y)$ for some $t_0\in\R$ and $s\in(-\epsilon,\epsilon)$ then $\dist_\phi(x,y)<\beta$.
\end{obs}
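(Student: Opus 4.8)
The plan is to observe that the implication $(2\Rightarrow 1)$ just proved uses the hypothesis that $\phi$ is $k^*$-expansive at only one point: it serves solely to pass from an orbit satisfying $\dist(\phi_{h(t)}(x),\phi_t(y))<\delta$ for all $t$ to a pair $t_0,s\in\R$ with $|s|<\epsilon$ and $\phi_{h(t_0)}(x)=\phi_{t_0+s}(y)$. In the present statement such a pair is handed to us as part of the hypothesis, so I would simply replay that proof with the step producing $t_0,s$ deleted; the constant $\delta'''$ coming from $k^*$-expansiveness disappears as well.

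Concretely, given $\beta>0$ one may assume $\beta\in(0,\beta_0)$ (a larger $\beta$ only weakens the conclusion, and a finite $X$ is trivial). Fix $\beta'\in(0,\beta)$, let $\delta'$ be the constant of Lemma \ref{lemakequiv} attached to $\beta'$, fix $\delta''\in(0,\delta')$, and choose $\epsilon>0$ (shrinking $\delta''$ if needed) so that (\ref{epsilon1}) and (\ref{epsilon2}) hold; put $\delta=\min\{\delta',\delta''\}$. Suppose $\dist(\phi_{h(t)}(x),\phi_t(y))<\delta$ for all $t\in\R$ and $\phi_{h(t_0)}(x)=\phi_{t_0+s}(y)$ with $|s|<\epsilon$. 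Set $z=\phi_{h(t_0)}(x)$ and $g(t)=h(t_0+t)-h(t_0)$, which is continuous with $g(0)=0$. Exactly as in the proof of $(2\Rightarrow 1)$ one has $\phi_{g(t)}(z)=\phi_{h(t_0+t)}(x)$ and $\phi_{t-s}(z)=\phi_{t+t_0}(y)$, so $\dist(\phi_{g(t)}(z),\phi_{t-s}(z))<\delta\le\delta''$ for all $t$; applying (\ref{epsilon2}) with the shift $s$ (legitimate since $|s|<\epsilon$) gives $\dist(\phi_{g(t)}(z),\phi_t(z))<\delta'$ for all $t$, hence Lemma \ref{lemakequiv} yields $\dist_\phi(\phi_{g(t)}(z),\phi_t(z))<\beta'$ for all $t$. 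Evaluating at $t=-t_0$ and using $\phi_{g(-t_0)}(z)=x$ and $\phi_{-t_0}(z)=\phi_s(y)$ gives $\dist_\phi(x,\phi_s(y))<\beta'$; since $|s|<\epsilon$, condition (\ref{epsilon1}) then widens the common orbit segment of $x$ and $\phi_s(y)$ to one of diameter $<\beta$ containing $x$ and $y$, so $\dist_\phi(x,y)<\beta$.

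There is no real obstacle here. The only thing to be careful about is to notice that, once $t_0$ and $s$ are given, the $(2\Rightarrow 1)$ argument uses nothing about $\phi$ beyond $\delta\le\delta''$, the two continuity conditions (\ref{epsilon1}) and (\ref{epsilon2}), and Lemma \ref{lemakequiv}; the degenerate case $x\in\sing$ is trivial, since then $\phi_{h(t_0)}(x)=x=\phi_{t_0+s}(y)$ forces $y=x$.
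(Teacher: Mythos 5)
Your proposal is correct and follows the paper's intent exactly: the remark is precisely the observation that the $(2\Rightarrow 1)$ argument only invokes $k^*$-expansiveness (via $\delta'''$) to manufacture the pair $t_0,s$, and once that pair is hypothesized the remaining chain through (\ref{epsilon2}), Lemma \ref{lemakequiv} and (\ref{epsilon1}) yields $\dist_\phi(x,y)<\beta$. Your replay, including dropping $\delta'''$ from the minimum and the harmless aside about $x\in\sing$, matches the paper's reasoning.
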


\par The equivalence of the definitions of expansive flow considered in \cite{BW} 
and $k^*-$expansive given in \cite{K}, in the lack of singular points, was shown in \cite{Oka}.

\section{Wandering points}

In this section we shall prove that expansive flows on surfaces do not present wandering points. 
We consider a continuous flow $\phi$ acting on a compact surface $S$.
First we will recall some basic tools.

\par Consider an embedded segment $l\subset S$. We say that $l$ is a \emph{local cross section} 
of time $\tau>0$ for the flow if $\phi$ maps $[-\tau,\tau]\times l$ homeomorphically onto 
$\phi_{[-\tau,\tau]}(l)$. If $a,b\colon l\to(0,\tau)$ are continuous then the set 
$U=\{\phi_t(x): x\in l\hbox{ and } t\in (-a(x),b(x))\}$ is called a \emph{flow box}. 
In \cite{W} it is shown that every regular point belongs to a local cross section. 

\begin{lemma}\label{cubrimiento} If the flow $\phi$ presents a finite number of singularities
then $S\setminus \sing =\cup_{i=1}^{+\infty}U_i$ where:
\begin{itemize}
\item each $U_i$ is a flow box,
\item each compact subset of $S\setminus\sing$ is contained in finitely many flow boxes $U_i$,
\item if $i\neq j$ then $U_i\cap U_j\subset\partial U_i\cap \partial U_j$
\end{itemize}
\end{lemma}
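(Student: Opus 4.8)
The plan is to build the cover in two steps: first a countable, locally finite cover of $\Omega:=S\setminus\sing$ by flow boxes, which already gives the first two items, and then a refinement of it, obtained by cutting $\Omega$ along a suitable $1$-complex, whose members have pairwise disjoint interiors, which gives the third.

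For the first step, note that $\sing$ is finite, hence closed, so $\Omega$ is an open subset of the compact metric space $S$; thus $\Omega$ is locally compact, Hausdorff and second countable, and admits an exhaustion $\Omega=\bigcup_{n\ge 1}C_n$ by compact sets with $C_n\subset\interior C_{n+1}$ (for instance $C_n=\{x\in S:\dist(x,\sing)\ge 1/n\}$, with $C_n=\emptyset$ for $n\le 0$; if $\sing=\emptyset$ simply take $C_n=S$). By \cite{W} every regular point lies on a local cross section, hence has an open neighborhood contained in a flow box, and by shrinking, this neighborhood can be placed inside any prescribed open set containing the point. For each $n$ the shell $C_n\setminus\interior C_{n-1}$ is compact and contained in the open set $\interior C_{n+1}\setminus C_{n-2}$, so it can be covered by finitely many flow boxes lying inside $\interior C_{n+1}\setminus C_{n-2}$. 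The union over all $n$ of these finite families is a countable cover of $\Omega$ by flow boxes; it is locally finite, because each point of $\Omega$ lies in some $\interior C_{m+1}\setminus C_{m-2}$ and this open set is disjoint from every flow box coming from a stage $n$ with $|n-m|\ge 3$. In particular each compact subset of $\Omega$, being covered by finitely many of the sets $\interior C_{m+1}\setminus C_{m-2}$, meets only finitely many flow boxes, which settles the first two items.

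For the third item I would cut $\Omega$ along a $1$-complex built from the cross sections. Let $l_1,l_2,\dots$ be the cross sections of the flow boxes above, slightly enlarged so that every point of $\Omega$ is of the form $\phi_s(x)$ with $x$ an interior point of some $l_i$ and $|s|$ bounded; then $\{l_i\}$ is locally finite and every orbit meets $\bigcup_i l_i$ with bounded return gaps. After putting the $l_i$ in general position, form the locally finite complex $\Gamma$ consisting of all the $l_i$, the orbit arcs through the endpoints of the $l_i$, short orbit arcs through the mutual intersection points of the $l_i$, and one extra transversal inside each cylinder of periodic orbits not already cut. Then $\Gamma$ is a locally finite union of compact arcs, hence closed in $\Omega$ and with empty interior, and the closures in $S$ of the connected components of $\Omega\setminus\Gamma$ are flow boxes: near any regular point $\Gamma$ looks like a finite grid, the arcs coming from the $l_i$ transverse to the flow and the orbit arcs tangent to it; flowing forward or backward from an interior point of a component one reaches $\bigcup_i l_i$ in bounded, continuously varying time; and no component wraps onto itself, because inside a flow box the flow-direction coordinate is strictly monotone along orbits. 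Hence each such component is homeomorphic to $J\times(\alpha,\beta)$ for a compact subsegment $J$ of some $l_i$ and continuous $\alpha,\beta$, i.e.\ a flow box. Distinct components have disjoint interiors, their closures cover $\Omega$ and meet one another only inside $\Gamma$ — which is contained in the union of their boundaries — and local finiteness is inherited from $\{l_i\}$. Relabelling these components as $U_1,U_2,\dots$ completes the argument.

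The bulk of the work, and the main obstacle, is this last step. The delicate points are putting the cross sections into general position (which needs care because $\phi$ is only continuous, so two cross sections can a priori meet in a complicated set and transversality is not a differential notion) and, above all, verifying that cutting along $\Gamma$ yields genuine flow boxes in the sense of the definition rather than annular or ``too long'' pieces; this is exactly where one must make the transversal family rich enough to cross every periodic orbit, and every cylinder of periodic orbits, using that no periodic orbit is contained in a flow box together with the bounded return time to $\bigcup_i l_i$. The first two items, by contrast, are the routine exhaustion-by-compacts argument, and ``each compact set meets finitely many $U_i$'' is merely local finiteness.
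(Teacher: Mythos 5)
Your proposal and the paper diverge completely in form: the paper's entire proof of this lemma is the single line ``See Proposition 4.3 of \cite{Gutierrez}'', so what you have written is essentially a from-scratch reconstruction of that cited result rather than an alternative to an argument the paper actually gives. Your strategy --- exhaustion of $S\setminus\sing$ by compact shells to get a countable, locally finite cover by flow boxes (items one and two), followed by cutting along a locally finite $1$-complex $\Gamma$ built from cross sections and orbit arcs to make the pieces meet only along boundaries (item three) --- is the standard route and is in the spirit of Gutierrez's construction. What your version buys is self-containedness and a clear separation of the easy part (local finiteness is indeed routine) from the hard part (the disjoint-interior refinement); what the citation buys the paper is not having to fight the genuinely delicate points you yourself flag.

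Those delicate points are where your sketch is thinnest, and you should not present them as afterthoughts. First, ``general position'' is not available for a merely continuous flow: two local cross sections can intersect in a closed set with empty interior but uncountably many components, so the usual fix is not perturbation but an inductive trimming --- add the cross sections one at a time, shortening each new one so that it meets the previously built complex only at endpoints lying on orbit arcs of $\Gamma$; as written, ``after putting the $l_i$ in general position'' is an unjustified step. Second, the extra transversals ``inside each cylinder of periodic orbits'' are unnecessary: since every point of $S\setminus\sing$ already lies in some flow box of your first family, every orbit (periodic or not) crosses some $l_i$, hence no component of the complement of $\Gamma$ can contain a closed orbit; the real thing to verify is that each component has a \emph{single} entry arc and a \emph{single} exit arc (so that it is a flow box over one subsegment $J$ of one $l_i$, with continuous entry and exit times), and this is exactly what the orbit arcs through endpoints and intersection points of the $l_i$ are for. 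With those two points made precise the argument closes, and it would constitute an honest proof of what the paper only cites.
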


\begin{proof}
 See Proposition 4.3 of \cite{Gutierrez}.
\end{proof}

As usual we define the $\omega$-limit set of a point
$x$ as the $\omega(x)=\{a\in S:\exists
t_n\to+\infty/\phi_{t_n}(x)\to a\}$.

\begin{lemma}\label{peixoto2}

Let $l=[a,b]$ and $l'$ be two compact local cross sections and
$\tau\colon [a,b)\to\R$ be a continuous function such that
$\phi_{\tau(x)}(x)\in l'$ for all $x\in[a,b)$ and $\lim_{x\to b}
\tau(x)=+\infty$. Then $\omega(b)\subset\sing$.
\end{lemma}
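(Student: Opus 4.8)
The approach is a proof by contradiction, along the classical lines of Peixoto's lemma. Suppose $\omega(b)\not\subset\sing$; then there is a regular point $p\in\omega(b)$. By \cite{W}, $p$ lies in a local cross section, and shrinking it I fix a compact local cross section $\Sigma$ with $p$ in its relative interior, of small diameter, together with its flow box $V=\phi_{(-\rho,\rho)}(\Sigma)$ for a small $\rho>0$. Since $p\in\omega(b)$, the forward orbit of $b$ meets $V$ at arbitrarily large times, hence it crosses $\Sigma$ at an increasing sequence of times $u_1<u_2<\cdots\to+\infty$; write $c_n=\phi_{u_n}(b)\in\Sigma$, and note that some subsequence of $(c_n)$ converges to $p$.

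The first step is the monotonicity of successive crossings: for a flow on a surface the sequence of successive intersections of an orbit with a local cross section is monotone along that section. This is the usual Jordan-curve argument applied to the closed curve formed by the orbit arc $\phi_{[u_n,u_{n+1}]}(b)$ together with the sub-arc of $\Sigma$ joining $c_n$ to $c_{n+1}$ (working inside a simply connected neighborhood, or passing to the orientation double cover, to deal with non-orientability). Consequently $(c_n)$ is monotone on $\Sigma$, and since it has a subsequence converging to $p$ the whole sequence converges to $p$. If the orbit of $b$ were periodic then $\omega(b)$ would be that periodic orbit and, since orbits near a periodic one have bounded return times to a transverse section, the hypothesis $\lim_{x\to b}\tau(x)=+\infty$ would fail; so I may assume the orbit of $b$ is injective.

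Next comes the comparison with nearby points of $l=[a,b]$. Fix $n$. By uniform continuity of $\phi$ on the compact set $[0,u_n]\times S$ there is $\eta_n>0$ such that, for $x\in l$ with $\dist(x,b)<\eta_n$, one has $\dist(\phi_t(x),\phi_t(b))<\rho/2$ for all $t\in[0,u_n]$; in particular $\phi_{u_k}(x)\in V$ for every $k\le n$, so the orbit of $x$ crosses $\Sigma$ at least $n$ times, at points near $c_1,\dots,c_n$, before time $u_n+\rho$. Since $\tau(x)\to+\infty$ as $x\to b$, for $x$ close enough to $b$ we also have $\tau(x)>u_n+\rho$. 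Hence, for each $n$ and every $x\in l$ sufficiently close to $b$, the transition arc $\phi_{[0,\tau(x)]}(x)$ joining $l$ to $l'$ crosses $\Sigma$ at least $n$ times near $p$, and these crossings are monotone along $\Sigma$ by the first step.

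The remaining step, which I expect to be the main obstacle, is to extract a contradiction from this. The idea is a Poincar\'e--Bendixson trapping argument: once the orbit of such an $x$ has crossed $\Sigma$ enough times near $p$, the monotonicity of the crossings together with the fixed direction in which the flow meets $\Sigma$ confines the orbit to a region bounded by one of its own arcs and a sub-arc of $\Sigma$, from which it can no longer escape to reach $l'$, contradicting $\phi_{\tau(x)}(x)\in l'$. The delicate points are keeping track, uniformly in $x$ as $x\to b$, of which side of these Jordan curves the flow points to, and using the configuration of $l$, $l'$ and the transition arcs to exclude the alternative in which the orbit merely spirals onto a periodic orbit. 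Everything preceding this endgame is routine.
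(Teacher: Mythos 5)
There is a genuine gap, and it sits exactly where you flagged it: the ``endgame'' you describe is not a detail to be filled in but the whole content of the lemma, and the route you propose for it does not work. The trapping idea requires that the closed curve formed by an orbit arc together with a sub-arc of $\Sigma$ bounds a disc; on the surfaces relevant to this paper (genus $\geq 2$, or non-orientable of high genus --- the only ones carrying expansive flows) such a curve need not separate the surface, so nothing is confined. For the same reason your ``first step'' is false as stated: successive crossings of a long orbit with a short transversal arc are \emph{not} monotone on a higher-genus surface (already for the irrational flow on the torus the returns to a sub-arc of a transversal circle are governed by an induced rotation and are far from monotone); the Jordan-curve argument cannot be localized to a simply connected neighborhood because the orbit arc between consecutive crossings is long. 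More fundamentally, no contradiction can be extracted from the behavior of a single orbit: for each fixed $x\neq b$ the orbit really does cross $\Sigma$ many times near $p$ and then really does reach $l'$ at time $\tau(x)$ --- that is perfectly consistent. The contradiction has to come from comparing different $x$'s.

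That comparison is what the paper does, and it is both shorter and more elementary than what you attempt. Choose the auxiliary compact section $j$ through the regular point $y\in\omega(b)$ \emph{disjoint from $l\cup l'$}, and let $N(x)$ be the number of times the transition arc $\phi_{[0,\tau(x)]}(x)$ meets $j$. Because the endpoints of the time interval land on $l$ and $l'$ (hence off $j$) and the crossings of $j$ are transverse, $N$ is an integer-valued function that is locally constant except at the finitely many parameters where a crossing falls on an endpoint of $j$; hence $N$ is bounded on $[a,b)$. Your third paragraph already proves, in effect, that $N(x)\to\infty$ as $x\to b$ (the orbit of $b$ meets $j$ infinitely often since $y\in\omega(b)$, and $\tau(x)\to\infty$), so the boundedness of $N$ is the missing punchline. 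No monotonicity, no trapping, and no discussion of periodic orbits is needed.
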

\begin{proof}
By contradiction suppose that there exist a regular point $y\in\omega(b)$. 
We can assume that $y\notin l\cup l'$. 
Consider a compact local cross section $j$ such that $y\in j$ and since $l$ and $l'$ are compact we can suppose that 
$j\cap l=j\cap l'=\emptyset$.
For all $x\in [a,b)$ consider the set $T_x=\{t\in
[0,\tau(x)]:\phi_t(x)\in j\}$ and define $N(x)\in\Z$ as
the number of points in $T_x$.
The continuity of $\phi$ implies that $N$ is continuous at $x$ if 
the points in $\phi_{T_x}(x)$ are not in the boundary of the interval $j$.
Therefore $N$ has at most two points of discontinuity and then $N$ is bounded.
On the other hand there is an infinite number of values of $t>0$ such that 
$\phi_t(b)\in
j$ because $y\in\omega(b)$. 
We also have that $\tau(x)\to +\infty$ as $x\to b$, thus $\lim_{x\to b} N(x)=\infty$, which is an absurd.
\end{proof}

 If $p$ is a singularity
and $x$ is a regular point such that $\phi_t(x)\to p$ as $t\to +\infty$ (resp. $t\to -\infty$)
then the orbit of $x$ is said to be a \emph{stable} (resp. \emph{unstable}) \emph{separatrix} of $p$.
We say that a point $x\in S$ is \emph{stable} if for all
$\epsilon>0$ there exist $\delta>0$ such that if $y\in B_\delta (x)$
then $\dist(\phi_t(y),\phi_t(x))<\epsilon$ for all $t>0$. A point $x\in S$
is \emph{asymptotically stable} if it is stable and there exists $r>0$ such that if $y\in B_r(x)$ then 
$\dist(\phi_t(y),\phi_t(x))\to 0$ 
as $t\to +\infty$.

\begin{lemma}\label{muchassep}
If a singularity of $\phi$
presents an infinite number of separatrices then at least
one of them is asymptotically stable.
\end{lemma}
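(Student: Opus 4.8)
The plan is to reduce to the case where $p$ has infinitely many \emph{stable} separatrices (if infinitely many are unstable, reverse the flow and prove the analogous statement), and then to exhibit an asymptotically stable one by a ``funnel'' argument around $p$. First I would fix a small closed disk $D\ni p$ containing no other singularity, with $\bar D$ an embedded $2$-disk. Each stable separatrix $\gamma_i$ is the orbit of a regular point whose forward orbit eventually enters $\interior D$ and converges to $p$; since $\phi_t(x)\to p\in\interior D$, once it is close enough to $p$ its whole forward tail stays in $\interior D$, so $\gamma_i$ has a last point $q_i\in\partial D$, with $\orb^+(q_i)\setminus\{q_i\}\subset\interior D$ and $\phi_t(q_i)\to p$ as $t\to+\infty$. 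The $q_i$ are pairwise distinct, so by compactness of the circle $\partial D$ they accumulate at a point $q^*\in\partial D$, which is regular since $q^*\neq p$.

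The core step is a \emph{funnel observation}. If $c_1,c_2$ lie on stable separatrices with $\orb^+(c_j)\setminus\{c_j\}\subset\interior D$ and $\phi_t(c_j)\to p$, and $J$ is one of the arcs of $\partial D$ joining $c_1$ to $c_2$, then $C:=\overline{\orb^+(c_1)}\cup\overline{\orb^+(c_2)}\cup J$ is a Jordan curve --- each $\overline{\orb^+(c_j)}=\orb^+(c_j)\cup\{p\}$ is a simple arc since the orbit is not periodic, the two arcs meet $\partial D$ only at $c_1,c_2$ and meet each other only at $p$ --- and, being contained in $\bar D$, it bounds an embedded disk $R\subset\bar D$. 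I claim $R$ is positively invariant with every orbit of $\interior R$ converging to $p$: an orbit in $R$ cannot cross the two orbit arcs on $\partial R$, and along $J$ the flow is directed into $R$ because the two bounding orbits run into the common corner $p$ --- I would check this at the endpoints $c_1,c_2$ (where the orbits enter $\interior D$) and propagate it along $J$ by transversality, replacing if necessary the arcs of $\partial D$ by small local cross sections through $c_1,c_2$ so that $J$ is transverse to $\phi$. Hence an orbit of $\interior R$ stays in $\bar R$, and its $\omega$-limit is a connected invariant subset of $\bar R$ whose only possible singularity is $p$; by Poincar\'e--Bendixson it is $\{p\}$, a periodic orbit, or a polycycle through $p$.

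With the funnel observation in hand I would conclude as follows. The $q_i$ accumulate at $q^*$ from at least one side (if from both, what follows applies to each side and we are done at once). Fixing $q_{i_1},q_{i_2},\dots\to q^*$ on that side, consecutive ones yield funnels $R_k$ that pile up against $\overline{\orb^+(q^*)}$, so that side of the orbit of $q^*$ lies in the basin of $p$; moreover $\orb^+(q^*)\subset\bar D$, being a limit of the boundary arcs $\orb^+(q_{i_k})\subset\bar D$, so $\omega(q^*)=\{p\}$ and $\gamma^*:=\orb(q^*)$ is a stable separatrix of $p$. For the other side of $\gamma^*$, the maximal arc $I\subset\partial D$ issuing from $q^*$ on that side and containing no $q_i$ in its interior ends at a point $d\in\overline{\{q_i\}}$ which is either some $q_j$ or is accumulated by $\{q_i\}$ from the side of $d$ away from $I$; in either case there is a stable separatrix with entry point arbitrarily close to $d$, and the funnel bounded by $\overline{\orb^+(q^*)}$, that separatrix, and the arc from $q^*$ to its entry point (which contains all of $I$) covers a one-sided neighbourhood of $\gamma^*$ on the remaining side. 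Thus a full neighbourhood $B_r(q^*)$ lies in the basin of $p$: every orbit in it converges to $p$, hence, since $\phi_t(q^*)\to p$ as well, $\dist(\phi_t(y),\phi_t(q^*))\to0$; together with the stability of $q^*$ furnished by the trapping property of the funnels, this makes $\gamma^*$ asymptotically stable.

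The hard part will be the funnel observation: that the flow enters the funnel across the transverse arc (which rests on the ``common corner'' remark and a transversality propagation), and, above all, that neither a periodic orbit nor a homoclinic polycycle occurs as the $\omega$-limit of an orbit inside a funnel. A periodic orbit in $\interior R$ would bound a subdisk of $R$ free of singularities, contradicting the Poincar\'e index theorem; the polycyclic case needs more work, and I expect it to be treated by induction on nested homoclinic loops, which either reduces to the case $\{p\}$ or locates an asymptotically stable homoclinic loop --- so the conclusion holds in every case.
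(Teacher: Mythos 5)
The paper's own ``proof'' of this lemma is only a pointer to Hartman's sector analysis, so there is no in-text argument to compare yours with; what matters is whether your funnel construction stands on its own, and it does not yet. The load-bearing step is the \emph{funnel observation}, and it is false as stated: an arc $J$ of $\partial D$ is not a cross section, and the direction in which the flow crosses $J$ is not determined by the behaviour at the two corners. For the standard linear saddle, take $c_1,c_2$ to be the points where the two stable separatrices meet $\partial D$ and $J$ the half-circle containing the exit point of an unstable separatrix: then $R$ is a half-disc, the flow crosses $J$ outward near that exit point, and no orbit of $\interior R$ other than the two boundary ones converges to $p$. Putting separate small cross sections through $c_1$ and $c_2$ controls only the ends of $J$. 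In your application you would therefore have to exclude exit points of unstable separatrices (equivalently, hyperbolic sectors) interleaved with the $q_{i_k}$ and accumulating at $q^*$ --- and that exclusion is precisely the content of the sector analysis the lemma encapsulates; it appears nowhere in your argument. A workable repair is to take a single local cross section $\Sigma$ through $q^*$, replace the $q_i$ by the crossings of their separatrices with $\Sigma$, and use that a connected cross section is crossed in one direction only, which your corner argument then identifies as ``into $R$''; but that is not what you wrote.

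Two further points. Even with a genuinely forward-invariant funnel, Poincar\'e--Bendixson leaves the alternative that $\omega$-limit sets inside it are polycycles of homoclinic loops at $p$, in which case the bounding separatrices are \emph{not} asymptotically stable; you explicitly defer this case, but together with the transversality issue it is the heart of the lemma, not an afterthought. Also, ``asymptotically stable'' in this paper requires Lyapunov stability, and a funnel whose mouth on $\Sigma$ is short need not be small --- the bounding orbits may travel through all of $\bar D$ before converging --- so stability does not follow from trapping alone. Finally, your reduction by time reversal produces a separatrix that is asymptotically stable for the \emph{reversed} flow, which is not literally the stated conclusion (though it is the form in which the paper actually applies the lemma, so this last point is more a defect of the statement than of your proof).
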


\begin{proof}
It follows by the arguments in \cite{Hartman} (pag. 161).
\end{proof}

\par We say that $x\in S$ is a \emph{wandering} point if there exist a neighborhood $U$ of $x$ and
 $\tau>0$ such that $\phi_t(U)\cap
 U=\emptyset$ for all $t>\tau$. We denote by $\Omega(\phi)$ the set of non-wandering
 points.

\begin{prop}\label{wanderings}
 If $\phi$ is expansive then $\Omega(\phi)=S$.
\end{prop}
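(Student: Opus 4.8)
The plan is to argue by contradiction: assume $\phi$ is expansive yet possesses a wandering point, and derive a configuration forbidden by Definition \ref{expflow}. Recall first from the proof of Theorem \ref{kequiv} that an expansive flow on an infinite compact space — hence on the compact surface $S$ — has finitely many singularities and $\beta_0>0$; fix once and for all some $\beta\in(0,\beta_0)$ together with an associated expansive constant $\delta>0$. Every singularity is trivially non-wandering, so a wandering point $q$ is regular. Since the set of wandering points is open, we may take $q$ with a compact local cross section $l$ through its interior such that the flow box $U$ over $l$ satisfies $\phi_t(U)\cap U=\emptyset$ for all $|t|>\tau$ — the wandering condition being symmetric under $t\mapsto-t$ — and such that the first return time of the orbit of $q$ to $l$, if it exists, exceeds $\tau$ (possible because a wandering point is not periodic). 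Thus the orbit of $q$ meets $U$, hence $l$, only on a bounded time interval, and is neither positively nor negatively recurrent.

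Next I would study the first-return map $R$ to $l$, whose domain is open in $l$ and omits $q$. If $q$ is a boundary point of that domain, a standard compactness argument shows that the return time of $x$ blows up as $x\to q$ inside the domain (a bounded return time would send $q$ back to $l$); after shrinking $l$ this gives a subinterval on whose closure the hypotheses of Lemma \ref{peixoto2} hold, so its far endpoint $b$ satisfies $\omega(b)\subset\sing$. As $\omega$-limit sets of points under a flow are connected and $\sing$ is finite, $\omega(b)=\{p\}$ for a single singularity $p$, i.e. the forward orbit of $b$ is a stable separatrix of $p$, and the nearby orbits pass ever closer to $p$ and then return to $l$. Iterating $R$ and recording the itineraries among the finitely many separatrices of the finitely many singularities and the finitely many flow boxes of Lemma \ref{cubrimiento}, one of two things happens: either the bookkeeping forces some singularity to carry infinitely many separatrices — and then Lemma \ref{muchassep} produces an asymptotically stable one — or the itinerary must eventually repeat, yielding a periodic orbit or a cycle of saddle connections which, because $q$ wanders and so this set is approached only from a definite side, attracts a neighborhood of the orbit of $q$ on that side and repels on the other. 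If instead a whole one-sided neighborhood of $q$ in $l$ consists of orbits that never return to $l$, I would rerun the argument with $\alpha$-limits in place of $\omega$-limits, or with the cross section placed at a regular point of $\omega(q)$ (and symmetrically for $\alpha(q)$).

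The final step is to extract from each of these outcomes a pair of distinct orbits whose trajectories remain within $\delta$ for every $t\in\R$: then, taking one point $x$ on the first and $y$ on the second, we get $\dist_\phi(x,y)=\diam(S)\ge\beta$ while $\dist(\phi_t(x),\phi_t(y))<\delta$ for all $t$, contradicting the choice of $\delta$ for $\beta<\beta_0$. Such a pair is produced, according to the case, from two orbits confined to an elliptic sector at $p$ (so both $\alpha$- and $\omega$-asymptotic to $p$), or from two orbits running between the same repelling and attracting pieces — their closeness for large $|t|$ coming from the convergence to those pieces and for bounded $t$ from choosing them close on $l$. I expect the real difficulty to be exactly this dichotomy-plus-extraction: making ``wandering implies a recurrent configuration'' precise on an arbitrary compact surface, where orientability, positive genus and boundary are all allowed so that naive Poincar\'e--Bendixson is unavailable, and, in the branch coming from Lemma \ref{muchassep}, passing from mere asymptotic stability of a separatrix to a pair of orbits that stay $\delta$-close in \emph{both} time directions, which needs the local sectorial picture near $p$ rather than one-sided stability alone. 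The opening reductions — wandering point regular, symmetry of the wandering condition, finiteness of $\sing$ — are routine.
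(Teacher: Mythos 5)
Your opening reductions and your overall plan (contradiction; a cross section $l$ that the flow never brings back to itself; a dichotomy hinging on stable separatrices and Lemma \ref{muchassep}; Lemma \ref{peixoto2} to force $\omega$-limits into $\sing$ when return times blow up) match the paper's, but the decisive step is missing. In the branch where no asymptotically stable separatrix is available you assert that ``the itinerary must eventually repeat, yielding a periodic orbit or a cycle of saddle connections'' attracting one side of the orbit of $q$. That classification is exactly what fails on surfaces of higher genus: after shrinking to a subsegment $l'$ none of whose points has a singular $\omega$-limit, the forward orbits may still accumulate on a nontrivial quasi-minimal set (as in a Cherry flow), and no periodic orbit or saddle cycle need appear. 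The paper avoids classifying $\omega(x)$ altogether: it covers $S$ by small discs $D_p$ around the singularities and flow boxes $U_i$ of diameter less than $\delta/2$ (Lemma \ref{cubrimiento}), and proves by induction --- using Lemma \ref{peixoto2}, the fact that $\omega(x)$ is not a singularity for $x\in l'$, and the fact that orbits of $l'$ never re-enter $l'$ --- that any two points $x,y\in l'$ cross the same sequence of entry sections $a_{I(n)}$; the piecewise-linear reparametrization with $h(t_n)=s_n$ then keeps $\phi_t(x)$ and $\phi_{h(t)}(y)$ within $\delta$ for all $t\ge 0$. This itinerary-matching argument is the real content of the proposition and is absent from your sketch.

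A second, smaller gap: your final extraction of a pair of orbits that are $\delta$-close for \emph{all} $t\in\R$ in one shot (elliptic sectors, or orbits ``between the same repelling and attracting pieces'') is not how the closeness can be obtained; convergence of two orbits to the same limit set does not by itself give uniform closeness under any reparametrization. The paper instead produces a subsegment on which all pairs stay $\delta$-close for $t\ge 0$ (in both of its cases), and then simply reruns the whole argument for the reversed flow inside that subsegment to get a further subsegment working for $t\le 0$ as well; since $\dist_\phi(x,y)\ge\beta_0>\beta$ for distinct $x,y$ on a non-returning section, this contradicts expansiveness. You flag both of these difficulties yourself, but flagging them is not the same as resolving them, so the proposal as written does not constitute a proof.
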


\begin{proof}
By contradiction suppose that there are wandering points. 
Then it is easy to see that there exists a local cross section $l$ such that
$\phi_t(l)\cap l=\emptyset$ for all $t\neq 0$. We will show
that there exists a subsegment $l'\subset l$ that contradicts the
expansiveness for positive values of $t$ and then arguing the same
for the opposite flow, we arrive to a contradiction.
Fix an expansive constant $\delta>0$. We study two possible cases.

\par Case 1. Suppose that there is an infinite number of points in stable separatrices in $l$. 
Since there is a finite number of singularities, there exists $p\in\sing$ with infinitely many separatrices meeting $l$. 
By Lemma \ref{muchassep} one
of them is asymptotically stable. Take $x\in l$ in a
asymptotically stable separatrix. Therefore there exist $\mu>0$
such that if $\dist(x,y)<\mu$ then
$\dist(\phi_t(x),\phi_t(y)<\delta/2$ for all $t\geq 0$. Finally take
$l'\subset l\cap B_\mu (x)$. For all $y,z\in l'$ it holds that $\dist(\phi_t(y),\phi_t(z))<\delta$ for all $t\geq 0$.

\par Case 2. Suppose that there is a finite number of stable separatrices meeting $l$. Then there exist
$l'\subset l$ such that for all $x\in l'$ it holds $\omega(x)$ is
not a singular point. In this case we say that $l'$ satisfies condition (1). For each singularity $p$ consider a disc $D_p$ around $p$ of diameter
less than $\delta/2$. Take
the covering $\{U_i:i\in\N\}$ of $S\setminus \sing$ given by Lemma \ref{cubrimiento}.
Making a subdivision of each $U_i$ we can suppose that $\diam(U_i)<\delta/2$ for all $i\in\N$.
Reordering the flow boxes we can suppose that the sets $D_p$ and $U_1,\dots,U_N$ make a finite covering of the surface. 
Considering $l'$ smaller we can also
suppose that orbits of the orbit segments in $\partial U_i$ do not meet $l'$, call it condition (2). 
Let $a_i$ be the local cross section of $\partial U_i$ where the flow enters to the flow box. We can also suppose that
$l'$ do not meet any $a_i$.
\par Fix $x,y\in l'$ and define $A=\cup_{i=1}^N a_i$. Hence there exist two
increasing and divergent sequences $t_n,s_n\in \R^+$ such that
$\{t_n:n\in\N\}=\{t\in\R^+:\phi_t(x)\in A\}$ and
$\{s_n:n\in\N\}=\{t\in\R^+:\phi_t(y)\in A\}$. Let
$I,J\colon\N\to\{1,\dots,N\}$ the functions given by:
$\phi_{t_n}(x)\in a_{I(n)}$ and $\phi_{s_n}(y)\in a_{J(n)}$.

\par By induction we will show that $I=J$. \emph{Base case}.
Let $l''=[x,y]\subset l'$ be the segment determined by the points
$x$ and $y$ and
$$X=\{z\in l'':\exists t>0/\phi_{[0,t]}(z)\cap a_{J(1)}=\emptyset
\hbox{ and } \phi_t(z)\in a_{I(1)}\}$$ We will prove that $y\in
X$. We have that $x\in X$. Also $X$ is an open set in $l''$ by
condition (2). Let $Y$ be the connected component of $X$ that
contains $x$. Then $Y$ is an interval. Let $u$ be the extreme of
$Y$, $u\neq x$. We will show that $u\in Y$ and therefore $u=y$ and
$y\in Y$. By the definition of $X$ we have that there exist a
continuous function $T\colon [x,u)\to \R^+$ such that
$\phi_{T(z)}(z)\in a_{I(1)}$ and $\phi_{[0,T(z)]}(z)\cap
a_{J(1)}=\emptyset$. By condition (1) we have that $T(z)$ can not
converge to $\infty$ as $z\to u$, because of Lemma \ref{peixoto2}.
Hence there exist $z_n\in [x,u)$ such that $z_n\to u$ and
$T(z_n)\to T_u$. Then $\phi_{T(z_n)}(z_n)\to \phi_{T_u}(u)$. On
the other side $\phi_{[0,T_u]}(u)$ do not meet $a_{J(1)}$, because
condition (2) implies that if $n$ is sufficiently big,
$\phi_{[0,T(z_n)]}(z_n)\cap a_{J(1)}\neq\emptyset$. Then we have
that $u\in Y$ and $I(1)=J(1)$. \emph{Inductive step}. In order to
repeat the previous argument note that if $I(k)=J(k)$ for all
$k=1,\dots,K$, if we define
$l_K=[\phi_{t_K}(x),\phi_{s_K(y)}]\subset a_{I(K)}$, then $l_K$
also verifies conditions (1) and (2).

\par Let $h\colon[0,+\infty)\to [0,+\infty)$ be such that $h(0)=0$,
$h(t_n)=s_n$ for all $n\in\N$ and extended linearly. In this way
$h$ in a homeomorphism. We will show that
$\dist(\phi_t(x),\phi_{h(t)}(y))<\delta$ for all $t\geq 0$. Fix
$n\in\N$ and let
$$t^*=\sup \{t\geq t_n:\phi_{[t_n,t]}(x)\subset U_i \hbox{ para
alg\'un } i=1,\dots N\}.$$ Let $i_0$ be such that
$\phi_{t^*}(x)\in b_{i_0}$. If $t^*\geq t_{n+1}$ then both
segments $\phi_{[t_n,t_{n+1}]}(x)$ and $\phi_{[s_n,s_{n+1}]}(y)$
are contained in $U_{i_0}$. Therefore
$$\dist(\phi_t(x),\phi_{h(t)}(y)<\delta$$ for all
$t\in[t_n,t_{n+1}]$, because $\diam(U_{i_0})<\delta/2$. Suppose
that $t^*<t_{n+1}$. Then $x^*=\phi_{t^*}(x)\in D_p$ for some
$p\in\sing$. Hence there exist $s^*\geq s_n$ such that
$y^*=\phi_{s^*}(y)\in b_{i_0}$ and $\phi_{[s_n,s^*)}(y)$ are
contained in $U_{i_0}$. Then $[x^*,y^*]\subset b_i$ is contained
in $D_p$. Therefore $\phi_{[t^*,t_{n+1}]}(x)$ and
$\phi_{[s^*,s_{n+1}]}(y)$ are contained in $D_p$. It only rest to
note that $\diam(D_p\cup U_{i_0})<\delta$.
\end{proof}

\section{Singularities of saddle type}

\par An isolated singularity is said to be of \emph{saddle type} if it has a positive and finite number of separatrices.

\begin{prop}
\label{singularities}
  If $\phi$ is expansive then its singularities are of saddle type.
\end{prop}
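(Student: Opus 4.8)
The plan is to argue by contradiction and to exclude the two ways a singularity $p$ of an expansive flow $\phi$ could fail to be of saddle type: having infinitely many separatrices, or having none. Since expansiveness forces $\sing$ to be finite (see the proof of Theorem \ref{kequiv}), $p$ is automatically isolated, so excluding these two cases yields that $p$ has a positive finite number of separatrices.

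\emph{A singularity cannot have infinitely many separatrices.} Suppose $p$ has infinitely many. By Lemma \ref{muchassep} one of them is asymptotically stable; being a stable separatrix, it is the orbit of a regular point $x$ with $\phi_t(x)\to p$ as $t\to+\infty$, and asymptotic stability gives in particular Lyapunov stability at $x$. Put $c=\dist(x,p)>0$. By stability there is $\delta_0\in(0,c/3)$ with $\dist(\phi_t(y),\phi_t(x))<c/3$ for all $y\in B_{\delta_0}(x)$ and all $t\geq0$, and there is $T>0$ with $\dist(\phi_t(x),p)<c/3$ for all $t\geq T$; hence $\dist(\phi_t(y),p)<2c/3$ for every $y\in B_{\delta_0}(x)$ and $t\geq T$. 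Since every point of $B_{\delta_0}(x)$ lies at distance greater than $2c/3$ from $p$, the neighbourhood $U=B_{\delta_0}(x)$ satisfies $\phi_t(U)\cap U=\emptyset$ for all $t\geq T$, so $x$ is a wandering point, contradicting Proposition \ref{wanderings}.

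\emph{A singularity cannot be free of separatrices.} Suppose $p$ has no separatrix. By the local structure of an isolated singularity of a surface flow with no separatrices --- an orbit that stays near $p$ cannot limit onto $p$, so by Poincar\'e--Bendixson it accumulates on a periodic orbit surrounding $p$ --- every neighbourhood of $p$ contains a periodic orbit around $p$; in particular there are distinct periodic orbits $\gamma_1,\gamma_2$ lying arbitrarily close to each other in the Hausdorff metric. Let $\delta>0$ be an expansive constant for the value $\beta=\diam(S)/2$. If $\gamma_1$ and $\gamma_2$ are chosen close enough, one can pick $x\in\gamma_1$, $y\in\gamma_2$ and $h\in\reparam$ --- obtained by covering $\gamma_2$ by a finite chain of small flow boxes (through which $\gamma_1$ then runs parallel to $\gamma_2$) and rescaling the time along $\gamma_1$ so that its period is carried onto the period of $\gamma_2$ --- such that $\dist(\phi_{h(t)}(x),\phi_t(y))<\delta$ for all $t\in\R$. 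Expansiveness then gives $\dist_\phi(x,y)<\diam(S)/2$, whereas $x$ and $y$ lie on the distinct orbits $\gamma_1$ and $\gamma_2$, so $\dist_\phi(x,y)=\diam(S)$ --- a contradiction.

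The step I expect to be the main obstacle is this last synchronisation: one must check that two periodic orbits close in the Hausdorff metric can be matched by a reparametrisation $h\in\reparam$ keeping them uniformly $\delta$-close for \emph{all} $t\in\R$. This is where the finiteness of $\sing$ and the flow-box covering of Lemma \ref{cubrimiento} are used: a finite chain of small flow boxes covering $\gamma_2$ also contains $\gamma_1$, the transit times through corresponding boxes differ by an amount that tends to $0$ with the Hausdorff distance, and so the accumulated phase error remains below $\delta$ over one period, hence --- by periodicity --- for all times.
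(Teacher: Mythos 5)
Your proof is correct, and its first half (excluding infinitely many separatrices) is essentially the paper's own argument: Lemma \ref{muchassep} produces an asymptotically stable separatrix and Proposition \ref{wanderings} rules it out; you merely make explicit the (correct) computation showing that such a separatrix yields a wandering point. The second half is where you genuinely diverge. The paper shows that $p$ cannot be Lyapunov stable (stable $\Rightarrow$ asymptotically stable $\Rightarrow$ wandering points, contradicting Proposition \ref{wanderings}) and then invokes the arguments of Lemma 8.1 of Hartman to extract at least one separatrix; you instead take the other horn of the same Hartman dichotomy --- an isolated singularity with no orbit tending to it in either time direction is a rotation point, i.e.\ every neighbourhood of $p$ contains a closed orbit surrounding $p$ --- and kill those closed orbits with expansiveness directly. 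Both routes rest on the same nontrivial local structure theorem, which neither you nor the paper proves from scratch; note that your one-line Poincar\'e--Bendixson remark only treats orbits that \emph{stay} near $p$ and does not explain why such orbits exist, so you should cite the dichotomy rather than pretend to derive it. Two further remarks. First, the synchronisation you flag as the main obstacle is avoidable: since the closed orbits may be taken inside $B_\epsilon(p)$, a point $x$ on one of them satisfies $\dist(\phi_t(x),\phi_t(p))<\epsilon$ for all $t\in\R$ with $h=\mathrm{id}$, while $\dist_\phi(x,p)=\diam(S)$ because $p\notin\phi_\R(x)$; no second orbit and no reparametrisation are needed. (Your flow-box chain construction can be made to work, but the claim that transit times through corresponding boxes converge is neither true in general nor needed --- only co-occupancy of a small box at matched times matters.) Second, in the first half you silently assume the asymptotically stable separatrix is forward-asymptotic to $p$; this holds here because infinitely many separatrices force elliptic sectors whose orbits are homoclinic to $p$, but it deserves a word since Lemma \ref{muchassep} does not state it.
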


\begin{proof} Expansiveness easily implies that the singularities are isolated. 
It is easy to see that if $p\in\sing$ is stable
then $p$ is asymptotically stable. But there are no asymptotically stable singularity because by
Proposition \ref{wanderings} there are no wandering points. So $p$ is not stable and we can apply
the arguments in the proof of Lemma 8.1 in \cite{Hartman} to conclude that there exists at least one
separatrix associated to $p$. Again by Proposition \ref{wanderings} there are no wandering points, thus there can not be an
asymptotically stable separatrix. Then by Lemma
\ref{muchassep} there is a finite number of separatrices.
\end{proof}

Now we will show the local behavior of the flow near a singularity of saddle type $p\in\sing$.
Consider an embedded disc $D\subset S$ such that $\sing\cap \clos(D)=\{p\}$. If $D$ is small enough
we can suppose that there is no separatrix of $p$ contained in $\clos(D)$. Using the Poincar\'e-Bendixon Theorem
we have that if $x\in D$ and $\phi_{\R^+}(x)\subset D$ or $\phi_{\R^-}(x)\subset D$ then $x$ belongs to
a separatrix of $p$. Moreover if $\phi_\R(x)\subset D$ then $x=p$. Now consider the set 
$$U=\{x\in D: \phi_{\R^+}(x)\nsubseteq \clos (D)\hbox{ and } \phi_{\R^-}(x)\nsubseteq \clos (D)\}$$
It is easy to see that $U$ is an open set. 
The connected components of $U$ will be called \emph{hyperbolic sectors}. Since $p$ has a finite number of separatrices
it is easy to see that there is a finite number of hyperbolic sectors in $D$.
The \emph{index} of a singularity of saddle type is 1-$n_h/2$, being $n_h$ the number of hyperbolic sectors in $D_p$.
If $p\in\partial S$ similar considerations can be made and we define the index of $p$ as $1-n_h$.
 In \cite{Hartman} Theorem 9.1 it is shown that this definition coincides with the usual notion of index for singular points
not lying in the boundary of the surface.

\par As usual we define the \emph{stable} and the \emph{unstable set} of a singular point $p$ as 
$$W^s(p)=\{x\in S:\phi_t(x)\to p\hbox{ as } t\to+\infty\}$$ 
$$W^u(p)=\{x\in S:\phi_t(x)\to p\hbox{ as } t\to-\infty\}$$
respectively.
\begin{df}\label{neighborhoodsingular}
If $p\in\sing$ is of saddle type we say that an embedded disc (or half disc if $p\in\partial S$) 
$D_p$ is an \emph{adapted} neighborhood of $p$ if
$\overline D_p\cap \fix=\{p\}$ and $\partial
D_p=\cup_{i=1}^{i=n}(\alpha_i\cup\beta_i\cup\gamma_i^+\cup\gamma^-_i)$.
Where $\alpha_i$ and $\beta_i$ are orbit segments and
$\gamma_i^\pm$ are local cross sections,
    such that there exist $x_i\in \gamma_i^+$ and $y_i\in\gamma^-_i$
    such that $W^u(p)\setminus\{p\}=\bigcup_{i=0}^n \phi_\R(x_i)$,
    $W^s(p)\setminus\{p\}=\cup_{i=0}^n \phi_\R(y_i)$. See figure \ref{entorno}.
\end{df}

\begin{figure}
   \input{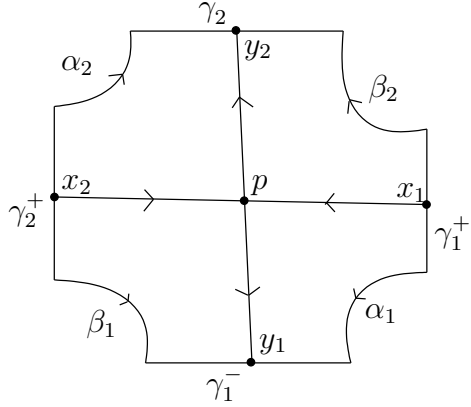}
    \caption{An adapted neighborhood of a singularity of index -1.}
   \label{entorno}
\end{figure}

\par Considering the analysis of hyperbolic sectors made in \cite{Hartman} (pag. 167) 
we have that every singularity of saddle type has an adapted neighborhood.
The following result shows how a singularity of negative index gives a kind of local expansiveness.

\begin{lemma}\label{sillaexp}
Suppose that $p$ is a singularity of saddle type of negative index and take an adapted neighborhood $D_p$ of $p$. Then
for all $\epsilon>0$ there exists $\delta>0$ such that if $x,y\in
D_p$ and $\dist(\phi_t(x),\phi_{h(t)}(y))<\delta$ for all $t\in\R$
and for some $h\in\reparam$, then $x$ and $y$ belongs to the same hyperbolic sector 
or there exist $t_0,s\in\R$ such that $\phi_{h(t_0)}(y)=\phi_{t_0+s}(x)$ and $|s|<\epsilon$.
\end{lemma}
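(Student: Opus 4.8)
The plan is to argue by contradiction using a compactness argument on the adapted neighborhood $D_p$. Suppose the conclusion fails: then there exist $\epsilon>0$, sequences $\delta_n\to 0$, points $x_n,y_n\in D_p$ lying in different hyperbolic sectors, and reparametrizations $h_n\in\reparam$ with $\dist(\phi_t(x_n),\phi_{h_n(t)}(y_n))<\delta_n$ for all $t\in\R$, yet there is no pair $t_0,s$ with $|s|<\epsilon$ and $\phi_{h_n(t_0)}(y_n)=\phi_{t_0+s}(x_n)$. Passing to subsequences, $x_n\to x$ and $y_n\to y$ in $\overline D_p$, and the $\delta_n\to 0$ condition forces $y\in\phi_\R(x)$ or $x=y=p$; in either case $x$ and $y$ lie in the (topological) closure of a common orbit. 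The point of the argument is to use the geometry of the hyperbolic sectors — the $\gamma_i^\pm$ cross sections and the $\alpha_i,\beta_i$ orbit arcs bounding $D_p$ — to show that orbit segments of $x_n$ and $y_n$ must eventually exit $D_p$ through the boundary in incompatible ways, contradicting the tracking hypothesis.

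First I would use the Poincar\'e--Bendixson analysis recalled just before the statement: any point of $D_p$ whose forward (resp.\ backward) orbit never leaves $\clos(D_p)$ lies on $W^s(p)$ (resp.\ $W^u(p)$), and these are finite unions of separatrices $\phi_\R(x_i)$, $\phi_\R(y_i)$. So for a ``generic'' point of a hyperbolic sector, the orbit enters $D_p$ through some $\gamma^+_i$ and leaves through the adjacent $\gamma^-_j$ after finite time, and the transition is governed by a continuous first-return–type map. Since $x_n,y_n$ are in different sectors, their entry/exit cross sections differ. I would then track both orbits forward (and backward) from time $0$: because negative index means $n_h\geq 3$ hyperbolic sectors (for an interior singularity; $n_h\ge 2$ on the boundary), the separatrices genuinely separate the disc, so two $\delta_n$-close orbits cannot straddle a separatrix for long without one of them having to follow the separatrix into $p$ while the other is pushed away along the unstable direction — this is exactly where the negativity of the index is used, and it is the heart of the matter.

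The main obstacle, and the step I expect to require the most care, is handling the reparametrization $h_n$ and the possibility that the orbit of $x_n$ (or $y_n$) spends a very long time near $p$ before exiting: the times at which the two orbits cross the cross sections $\gamma_i^\pm$ need not be comparable a priori, so I must show that either $x$ and $y$ are genuinely on the same local orbit segment (giving the conclusion $\phi_{h(t_0)}(y)=\phi_{t_0+s}(x)$ with small $s$, via Lemma~\ref{peixoto2} to rule out divergent return times once the points are off the separatrices), or else the limit orbit through $x$ is a separatrix of $p$ and the perturbed orbits $x_n,y_n$ in distinct sectors are forced apart. Concretely, if $x$ lies on $W^s(p)$, then for large $n$ the orbit of $x_n$ closely shadows $W^s(p)$ into a small neighborhood of $p$ and then must leave along $W^u(p)$; an orbit $y_n$ in a different sector, $\delta_n$-close to $x_n$, would have to do the same but its exit separatrix is determined by its own sector, and since distinct hyperbolic sectors have distinct exit separatrices these two exiting orbits separate by a definite amount (bounded below in terms of the geometry of $D_p$), contradicting $\dist<\delta_n$ for large $n$. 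If instead no forward orbit of $x_n$ approaches $p$, the first-return maps between the $\gamma_i^\pm$ are (uniformly) continuous on compact subsets, the return times stay bounded by Lemma~\ref{peixoto2}, and one extracts in the limit an honest relation $\phi_{h(t_0)}(y)=\phi_{t_0+s}(x)$; a continuity/openness argument as in the proof of Proposition~\ref{wanderings} then upgrades this to $|s|<\epsilon$, the desired alternative.

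Finally I would remark that the boundary case $p\in\partial S$ is handled identically, replacing the disc by a half-disc and using the index formula $1-n_h$, so that negative index again guarantees $n_h\geq 2$ and hence at least two separatrices on the boundary, which is all the separation argument needs; and that the case $x=y=p$ is the degenerate instance of ``same hyperbolic sector'' once one observes that $x_n,y_n\to p$ forces them, for large $n$, into sectors adjacent to a common separatrix, which one treats by the same shadowing dichotomy.
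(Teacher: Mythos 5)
Your geometric intuition is the right one, but there are two concrete gaps. First, the separation mechanism you give for two points in different hyperbolic sectors is false as stated: two adjacent sectors of a saddle share exactly one separatrix, and when that shared separatrix is unstable, points of \emph{both} sectors exit $D_p$ through the \emph{same} cross section $\gamma_i^+$, arbitrarily close to the point $x_i$ on that separatrix. So ``distinct hyperbolic sectors have distinct exit separatrices'' is not true, and tracking forward orbits alone yields no lower bound on $\dist(\phi_t(x_n),\phi_{h_n(t)}(y_n))$. For such a pair the definite separation has to be extracted in backward time, through the entrance sections $\gamma^-$ bounding the two sectors on their other sides (and symmetrically in forward time for sectors sharing a stable separatrix). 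This is exactly what the paper's condition (2) --- that $\dist(\gamma_i^\pm,U_j)\geq\delta$ whenever the cross section does not touch the sector $U_j$ --- encodes, and it is here (together with the count of sectors forced by negative index) that one gets a uniform $\delta$. Second, your negation of the conclusion is incomplete: ``$x$ and $y$ belong to the same hyperbolic sector'' also fails when one or both points lie on the separatrix skeleton $W$ (the union of $p$ with the separatrix arcs in $D_p$), hence in no sector at all. The case $x\in W$, $y\notin W$ is precisely where the paper uses negativity of the index: $y$ must escape through some $\gamma_i^+$, while the forward orbit of $x$ stays on the boundary of a sector $U_k$ that can be chosen with $\dist(U_k,\gamma_i^+)\geq\delta$ only because there are at least four sectors. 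The case $x,y\in W$ is where the alternative conclusion with $|s|<\epsilon$ is actually produced, by uniform continuity along the finitely many separatrix arcs (the paper's condition (1)); your appeal to ``a continuity/openness argument as in Proposition \ref{wanderings}'' leaves that step unsupplied.

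A further, structural point: the compactness setup does not do the work you assign to it. Since $h_n(0)=0$, the hypothesis at $t=0$ gives $\dist(x_n,y_n)<\delta_n\to 0$, so the limits of $x_n$ and $y_n$ coincide and the limit point retains no memory of which sectors (or which separatrices) the $x_n,y_n$ occupied; the statement to be proved is still a uniform lower bound on $\sup_{t}\dist(\phi_t(x),\phi_{h(t)}(y))$ for every pair not in a common sector and not locally on one orbit. The paper obtains this directly, choosing $\delta$ once and for all from the finite geometry of the adapted neighborhood (mutual distances between the $\gamma_i^\pm$ and the sectors, plus a modulus of continuity along the separatrices) and then checking the three cases $x\in W,\ y\notin W$; $x,y\in W$; and $x,y$ in distinct sectors. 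Your limit extraction postpones, rather than supplies, that uniform estimate.
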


\begin{proof}
We give the proof for the case $p\notin\partial S$, the other case is similar.
In the notation of the Definition \ref{neighborhoodsingular} define 
$W=\{p\}\cup_{i=1}^{n_h/2}\phi_{\R^+}(x_i)\cup_{i=1}^{n_h/2}\phi_{\R^-}(y_i)$.
It is the complement of the hyperbolic sectors in $D_p$.
Consider $\delta>0$ such that the following conditions hold:
\begin{enumerate}
 \item if $z\in W$ and
    $z\in B_\delta(x_i)$ or $z\in
    B_\delta(y_i)$ then there exist $s\in(-\epsilon,\epsilon)$
    such that $\phi_s(z)=x_i$ ($\phi_s(z)=y_i$) and
\item for all hyperbolic sector $U_j$ and $\gamma_i^\pm$
if $\dist(\gamma_i^\pm,U_j)<\delta$ then $\dist(\gamma_i^\pm,U_j)=0$.
\end{enumerate}
    \par Now we study three possible cases.
    \par Case 1: $x\in W$, $y\notin W$.  Since
        $y\notin W$ there exist $t'>0$ such that
        $\phi_{h(t')}(y)\in\gamma_i^+$ for some $i$. 
Without loss of generality
    suppose that $\lim_{t\to+\infty}\phi_t(x)=p$. Since the index of $p$ is negative, there exist a
hyperbolic sector $U_k$ such that $\phi_t(x)\in \partial U_k$ for all $t\geq 0$ and $\dist(U_k,\gamma_i^+)\geq \delta$. 
Therefore
        $\dist(\phi_{h(t')}(y),\phi_{t'}(x))\geq \delta_1$.
        \par Case 2: $x,y\in W$. If $x=y=p$ the case is trivial. If
        $x,y\in \phi_{\R^+}(x_i)$ (or $\phi_{\R^-}(y_i)$, the argument is the same) we conclude by condition (1). 
	If $x$ and $y$ are in different connected components of $W\setminus\{p\}$
	we conclude again by condition (2).
        \par Case 3: $x,y\notin W$ but not in the same hyperbolic sector. We conclude by condition (2).
\end{proof}

\section{Smooth models for expansive flows}

In this section we will show that continuous expansive flows on $C^\infty$ compact surfaces are topologically
equivalent to flows of $C^\infty$ class.
Two flows $\phi\colon \R\times S\to S$ and $\phi'\colon \R\times S'\to S'$ are \emph{topological equivalent} if there exists a homeomorphisms
from $S$ to $S'$ preserving the orbits.
\par A regular point $x\in S$ is said to be \emph{periodic} if there exists $t>0$ such that $x=\phi_t(x)$. We do not consider singularities
as periodic points.

\begin{prop}\label{expnoper}
Expansive flows on compact surfaces do not have periodic points.
\end{prop}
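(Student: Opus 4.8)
The plan is to argue by contradiction: assume $\phi$ has a periodic point, hence a periodic orbit $\gamma=\phi_\R(x_0)$ of minimal period $T>0$. Minimality of $T$ makes $\gamma$ an embedded circle, so I fix a local cross section $l$, with $x_0$ in its interior, such that $\gamma\cap l=\{x_0\}$, and consider the associated first return map $P$, defined and continuous on a two–sided neighbourhood of $x_0$ in $l$, with return time function $\tau$, $P(x_0)=x_0$ and $\tau(x_0)=T$. Replacing $P$ by $P^2$ and $T$ by $2T$ if a neighbourhood of $\gamma$ in $S$ is a M\"obius band, I may assume that $P$ preserves the orientation of $l$. Fix once and for all $\beta$ with $0<\beta<\diam S$ and let $\delta>0$ be an expansive constant for $\beta$. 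The argument splits according to whether $x_0$ is isolated among the fixed points of $P$.

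First suppose $x_0$ is an isolated fixed point of $P$. Then on at least one side of $x_0$, say on a half–open subinterval $(x_0,b]\subset l$, the continuous function $P-\mathrm{id}$ has no zero and hence constant sign; say $P(y)<y$ for all $y\in(x_0,b]$ (the case $P(y)>y$ is identical, applied to the reversed flow $-\phi$, which is again expansive and, by Proposition \ref{wanderings}, has no wandering points, the set of wandering points being invariant under time reversal). Since $P$ is increasing and fixes $x_0$, it maps $(x_0,b]$ into $(x_0,b)$, and the successive returns $y>P(y)>P^2(y)>\cdots$ of any $y\in(x_0,b)$ decrease to $x_0$. Choose a small closed subinterval $J\subset(x_0,b)$ with $y$ in its interior and small enough that $P(\overline J)$ lies strictly below $\overline J$; then $P^k(z)<\min J$ for every $z\in J$ and every $k\ge1$, so the forward $\phi$–orbit of any $z\in J$ meets $l$ only outside $J$ after time $0$. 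Consequently a thin flow box over $J$ is disjoint from all its forward images after a bounded time, so $y$ is a wandering point, contradicting Proposition \ref{wanderings}.

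Therefore $x_0$ is not isolated among the fixed points of $P$: there is a sequence $y_n\to x_0$ in $l$ with $y_n\ne x_0$ and $P(y_n)=y_n$. Each $y_n$ is then a periodic point of $\phi$ with period $T_n:=\tau(y_n)$, and $T_n\to T$ by continuity of $\tau$. Let $h_n\in\reparam$ be given by $h_n(t)=(T_n/T)\,t$. Then $t\mapsto\phi_{h_n(t)}(y_n)$ is $T$–periodic (because $h_n(t+T)=h_n(t)+T_n$ and $T_n$ is a period of $y_n$) and $t\mapsto\phi_t(x_0)$ is $T$–periodic; since for $t\in[0,T]$ one has $|h_n(t)-t|\le|T_n-T|\to0$ and $y_n\to x_0$, the uniform continuity of $\phi$ on $[-2T,2T]\times S$ gives $\dist(\phi_{h_n(t)}(y_n),\phi_t(x_0))<\delta$ for all $t\in\R$ once $n$ is large. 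Expansiveness then yields $\dist_\phi(y_n,x_0)<\beta<\diam S$, which forces $y_n\in\phi_\R(x_0)=\gamma$, because $\dist_\phi$ equals $\diam S$ on points lying in distinct orbits. But $\gamma\cap l=\{x_0\}$, so $y_n=x_0$, contradicting $y_n\ne x_0$. Hence $\phi$ has no periodic points.

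I expect the main obstacle to be the isolated case: one must check carefully that an orbit attracting (or repelling) from one side really does produce wandering points — i.e. that the monotone behaviour of the return iterates genuinely pushes a small flow box off itself forever — and one must deal with the orientation–reversing first return map that occurs along one–sided periodic orbits by passing to the square of $P$. Once $x_0$ is known to be a non–isolated fixed point of the orientation–preserving return map, the contradiction with expansiveness is soft, coming only from the fact that $\dist_\phi$ takes the value $\diam S$ on distinct orbits.
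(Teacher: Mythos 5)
Your proof is correct and follows essentially the same route as the paper: the paper's (very terse) argument also takes a cross section through the periodic orbit, uses expansiveness to rule out nearby fixed points of the first return map, and then derives wandering points contradicting Proposition \ref{wanderings}. Your write-up simply fills in the details of both steps (the reparametrization $h_n(t)=(T_n/T)t$ for the non-isolated case and the drifting flow box for the isolated case), including the orientation-reversing subtlety the paper leaves implicit.
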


\begin{proof}
By contradiction suppose that $x\in S$ is a periodic point. Take a
transversal $l$ trough $x$ and consider the first return map
$f\colon l'\subset l\to l$ defined in a smaller section $l'$.
Expansive implies that the map $f$ can not have fixed points
close to $x$. Therefore there exist wandering points,
contradicting Proposition \ref{wanderings}.
\end{proof}

An embedded circle $\gamma\subset S$ is said to be a \emph{transversal circle} if for all
$x\in\gamma$ there exists a neighborhood $U$ of $x$ such that $U\cap\gamma$ is a local cross section.

\begin{lemma}\label{irrationaltorus}
Suppose that $\phi$ do not have periodic points, $\sing$ is a finite set, 
the union of the stable separatrices is not dense
in the surface and $\Omega(\phi)=S$. Then $S$ is the torus and $\phi$ is an irrational flow.
\end{lemma}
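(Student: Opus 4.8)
The plan is to prove that $\sing=\emptyset$ and then to recognize $\phi$ as an irrational flow on the torus by the classical suspension argument; the first part is the substantial one.

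\emph{Ruling out singularities.} I would first check that, under these hypotheses, every singularity is of saddle type and has a stable separatrix. Indeed, a singularity with infinitely many separatrices would have, by Lemma \ref{muchassep}, an asymptotically stable one, and an asymptotically stable separatrix forces wandering points nearby, contradicting $\Omega(\phi)=S$; the same observation applied to the reversed flow $-\phi$ excludes infinitely many unstable separatrices; and a singularity with no separatrix at all is, locally, a center (which creates periodic orbits) or a focus (a sink is asymptotically stable, a source has infinitely many separatrices for $-\phi$), both impossible. So every singularity has a positive finite number of separatrices, and since its only admissible local sectors are then hyperbolic (parabolic or elliptic sectors would again give infinitely many separatrices) and each hyperbolic sector is bounded by a stable separatrix, it has at least one stable separatrix; hence $\sing$ lies in the closure of the union of the stable separatrices. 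Now suppose $p\in\sing$ and let $\sigma$ be a stable separatrix of $p$. I would analyse $\alpha(\sigma)$. If $\alpha(\sigma)$ contains no singularity, then — there being no periodic orbits — it contains a nontrivially recurrent orbit, so $\sigma$ accumulates backward on a quasi-minimal set; using $\Omega(\phi)=S$ to prevent this set from attracting or repelling from either side, it must fill $S$, whence $\overline\sigma=S$ and the stable separatrices are dense, a contradiction. If $\alpha(\sigma)$ does contain a singularity, then by Lemma \ref{peixoto2} (the return times to a cross section near it cannot diverge without that singularity absorbing the $\alpha$-limit) $\sigma$ is a saddle connection, and iterating with $\sing$ finite one reaches a cycle of saddle connections $\Gamma$; absence of wandering points forbids $\Gamma$ from attracting or repelling on either side, so both complementary sides carry recurrence and the separatrices are dense again, the same contradiction. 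Therefore $\sing=\emptyset$.

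\emph{Identifying the surface and the flow.} With $\sing=\emptyset$, $\phi$ is a nonsingular flow with no periodic orbits and $\Omega(\phi)=S$. If $\partial S\neq\emptyset$, a boundary circle would be a single periodic orbit; if $\chi(S)\neq0$, no nonsingular flow exists; and the Klein bottle carries no nonsingular flow without periodic orbits (Kneser); so $S$ is the torus. Since every orbit is recurrent I would then take a transversal circle $C$ through a recurrent orbit, observe via Lemma \ref{peixoto2} that the first–return time to $C$ is bounded (there are no singularities to make it diverge), so the Poincar\'e map $f\colon C\to C$ is a homeomorphism, note that $f$ has no periodic points (those would be periodic orbits of $\phi$), hence $f$ is orientation preserving with irrational rotation number, and finally use $\Omega(\phi)=S$ to rule out wandering intervals of $f$; by Poincar\'e's classification $f$ is then conjugate to an irrational rotation, and $\phi$ is topologically equivalent to an irrational flow on the torus.

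The step I expect to be the real obstacle is showing $\sing=\emptyset$: it rests on a case analysis of the $\alpha$-limit set of a stable separatrix (nontrivially recurrent versus containing a singularity, and in the latter case the polycycle alternative), each branch of which has to be closed off by invoking the absence of periodic orbits, the absence of wandering points, or the non-density of the stable separatrices — and making the quasi-minimal/polycycle dichotomy precise for a merely continuous flow is where the care is needed. The remaining steps are the standard structure theory of flows on compact surfaces together with the tools developed in Sections 1--3.
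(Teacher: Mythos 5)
Your overall strategy --- first prove $\sing=\emptyset$ by analysing the $\alpha$-limit set of a stable separatrix, then run the classical circle-homeomorphism argument --- differs from the paper's, and the step you yourself flag as the obstacle is where it breaks. The dichotomy you use is not the right one: ``$\alpha(\sigma)$ contains a singularity'' does \emph{not} imply that $\sigma$ is a saddle connection, and Lemma \ref{peixoto2} does not yield that implication (it only says that divergent return times force the $\omega$-limit of the endpoint of the section into $\sing$). Indeed $\alpha(\sigma)$ may contain singularities \emph{and} nontrivially recurrent orbits simultaneously --- this is exactly what happens for the separatrices of a suspended minimal interval exchange map, where $\alpha(\sigma)=S\supset\sing$ --- so that case falls into your second branch, where your conclusion is false. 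The correct trichotomy (a singleton, a graph of saddle connections, or a set containing a nontrivially recurrent orbit) has to be stated and, for a merely continuous flow, justified. Two further steps in your second branch are unsupported: (i) even when $\sigma$ genuinely is a saddle connection, ``iterating with $\sing$ finite'' does not produce a cycle, since nothing forces the adjacent separatrices at the endpoints to be saddle connections as well; and (ii) in the polycycle case, ``both complementary sides carry recurrence'' does not by itself give density of the \emph{stable separatrices} --- you would need a Maier-type theorem asserting that a nontrivial quasi-minimal set contains a dense separatrix, which is neither proved in the paper nor supplied by you. Your first branch and the torus identification at the end are essentially sound, but they do not carry the argument on their own.

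The paper avoids all of this by using the non-density hypothesis \emph{constructively} rather than as the target of a contradiction: since the union of the stable separatrices is not dense and $\sing$ is finite, there is a local cross section $l$ meeting no separatrix; $\Omega(\phi)=S$ gives a return to $l$, Lemma \ref{peixoto2} shows that every point of the relevant section returns (no point of it can have singular $\omega$-limit), and one obtains a transversal circle $\gamma$ whose first-return map is a homeomorphism without periodic points which, by $\Omega(\phi)=S$, is conjugate to an irrational rotation. The suspension structure then yields the torus, the irrational flow, and $\sing=\emptyset$ all at once, with no classification of limit sets of separatrices needed. If you wish to keep your route, replace your dichotomy by ``$\alpha(\sigma)$ contains a nontrivially recurrent orbit'' versus ``$\alpha(\sigma)$ is a singularity or a graph,'' and supply an actual argument for the graph case; as written, that case is open.
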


\begin{proof}
 Since the union of the stable separatrices is not dense and $\sing$ is a finite set there exists a cross section $l$
without points in separatrices. There exist $x\in l$ and $t>0$ such that $\phi_t(x)\in l$ because $\Omega(\phi)=S$.
As there are no periodic points we have that $\phi_t(x)\neq x$. Let $l^*=\{y\in l: \phi_{\R^+}(y)\cap l\neq\emptyset\}$
and consider $f\colon l^*\to l$ the first return map. 
If the surface is not orientable then $f$ may reverse orientation in some connected component of $l^*$. But applying
Lemma \ref{peixoto2} we can easily show that there exist a periodic point. So we can suppose that $f$ preserves orientation
and with standard techniques it can be shown that there exists a transversal circle $\gamma$. Moreover we can suppose 
that $\gamma\subset \phi_\R(l)$. Therefore there is no separatrix meeting $\gamma$. Now we consider $\gamma^*$ as 
the set of points of $\gamma$ that returns to $\gamma$. Since there are no points in separatrices in $\gamma$ we can
apply Lemma \ref{peixoto2} to conclude that $\gamma^*=\gamma$. So the first return map of $g\colon\gamma\to\gamma$ is an homeomorphism.
Since $\Omega(\phi)=S$ and there are no periodic points we have that $g$ is conjugated to an irrational rotation on the circle $\gamma$. Then 
$\phi$ is conjugated to a suspension of $g$ and $S$ is the torus.
\end{proof}

Singularities play an important role in the subject of expansive flows on surfaces. It is shown in the following result.

\begin{prop}\label{separatrices densas2}
 If $\phi$ is an expansive flow on a compact surface $S$ then the union of the stable 
separatrices is dense in S. In particular $\sing\neq\emptyset$.
\end{prop}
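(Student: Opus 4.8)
The plan is a proof by contradiction. Assume that the union of the stable separatrices of $\phi$ is not dense in $S$, and derive that $\phi$ cannot be expansive. First I would record the structure already available for an expansive flow: by Proposition \ref{expnoper} the flow $\phi$ has no periodic points; the proof of Proposition \ref{singularities} shows that its singularities are isolated, so, $S$ being a compact surface, $\sing$ is finite; and by Proposition \ref{wanderings} we have $\Omega(\phi)=S$. Thus all the hypotheses of Lemma \ref{irrationaltorus} are met, and we conclude that $S$ is the torus and $\phi$ is an irrational flow.

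Next I would show that an irrational flow on the torus is not expansive, which contradicts the standing assumption and finishes the proof. Write $S=\R^2/\Z^2$ and let $w$ be the irrational-slope direction of $\phi$, so that $\phi_t$ is the translation by $tw$. Fix $\beta\in(0,\diam(S))$; I claim no $\delta>0$ is an expansive constant for $\beta$. Since a smaller $\delta$ is a fortiori an expansive constant whenever a larger one is, it suffices to reach a contradiction for $\delta$ smaller than the injectivity radius of $S$. Pick $x\in S$ and a vector $v\in\R^2$ with $|v|<\delta$ such that $v\notin\R w+\Z^2$; this is possible because $\R w+\Z^2$ has measure zero in $\R^2$. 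Then $y:=x+v$ does not lie on the orbit $\phi_\R(x)$, while $\phi_t(y)=\phi_t(x)+v$ for all $t$, so $\dist(\phi_t(x),\phi_t(y))=|v|<\delta$ for all $t\in\R$ with $h=\mathrm{id}\in\reparam$. On the other hand $\dist_\phi(x,y)=\diam(S)>\beta$ since $y\notin\phi_\R(x)$. This contradicts Definition \ref{expflow}, so $\phi$ is not expansive.

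Therefore the union of the stable separatrices is dense in $S$. Finally, if $\sing=\emptyset$ there would be no separatrices at all, so their union would be empty and hence not dense in the nonempty surface $S$; thus $\sing\neq\emptyset$. The only step requiring a little care is the verification of the hypotheses of Lemma \ref{irrationaltorus} — in particular the finiteness of $\sing$, which follows from the isolatedness of the singularities together with compactness of $S$; the rest is the elementary observation that the linear torus flow is equicontinuous and so far from being expansive.
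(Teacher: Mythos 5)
Your argument is correct, and its first half is exactly the paper's: reduce, via Propositions \ref{expnoper} and \ref{wanderings} and the finiteness of $\sing$, to Lemma \ref{irrationaltorus}, so that the non-density of the stable separatrices forces $\phi$ to be an irrational flow on the torus. Where you diverge is in ruling that flow out. The paper observes that such a flow has no singularities, hence is expansive in the sense of Bowen--Walters, and then quotes Theorem 6 of \cite{BW} (suspension of an expansive homeomorphism) together with the non-existence of expansive homeomorphisms of the circle. You instead give a direct, self-contained computation: the linear flow $\phi_t(x)=x+tw$ is an isometry of the flat torus, so $y=x+v$ with $v$ small and $v\notin\R w+\Z^2$ stays at constant distance $|v|<\delta$ from $x$ for all time while $\dist_\phi(x,y)=\diam(S)$, killing expansiveness. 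This buys elementarity and avoids importing two external results; what it costs is a small point of rigor you should make explicit: Lemma \ref{irrationaltorus} really produces a flow \emph{topologically conjugate} to a suspension of an irrational rotation, not literally the translation flow, so your exact isometry computation applies only after noting that expansiveness in the sense of Definition \ref{expflow} (which quantifies over all reparametrizations $h\in\reparam$) is invariant under topological equivalence on compact spaces --- a standard uniform-continuity argument of the same kind the paper uses in Case 1 of Proposition \ref{expsinsing}. With that one sentence added, your proof is complete and arguably more transparent than the citation-based conclusion in the paper.
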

\begin{proof} Arguing by
contradiction, suppose that the union of the stable separatrices is not
dense in $S$. By Proposition \ref{expnoper} $\phi$ has no periodic points and by Proposition \ref{wanderings} we have
that $\Omega(\phi)=S$. So we can apply Lemma \ref{irrationaltorus} and conclude
that $\phi$ is a suspension of an irrational rotation of the circle. In particular there are no singularities and
then $\phi$ is expansive in the sense of \cite{BW}. But this is a contradiction because there are no expansive homeomorphisms
on the circle (Theorem 6 in \cite{BW} shows that a homeomorphism is expansive if and only if its suspensions are expansive flows).
 
\end{proof}

\par In \cite{Gutierre} it is shown that Cherry flows are topologically equivalent to $C^\infty$ flows. The definition of Cherry flows
given in \cite{Gutierre} is the following. A continuous flow $\phi\colon \R\times S\to S$ is a \emph{Cherry flow} if the following conditions hold.
\begin{enumerate}
 \item $\phi$ has only finitely many singular points.
\item Let $p_1, p_2, \dots, p_m$ be the source singular points of $\phi$ and let $\lambda_1,\lambda_2,\dots,\lambda_m$ 
be their basins of repulsion. Then each $\lambda_j$ contains a unique trajectory $\theta_j$ connecting
$p_j$ to another (unique) fixed point $q_j\in\partial \lambda_j$.
\item There are finitely many points $x_1,x_2,\dots,x_n$ such that 
$$(\cup_{i=1}^m \lambda_i)\cup(\cup_{j=1}^n\phi_{\R^+}(x_j))$$ is dense in $S$.
\end{enumerate}

\begin{prop}\label{Cherry}
 Expansive flows are Cherry flows.
\end{prop}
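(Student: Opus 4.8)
The plan is to verify the three defining conditions of a Cherry flow for an expansive flow $\phi$ on a compact surface $S$, drawing on the structural results already established. Condition (1), finiteness of the singular set, is immediate: Proposition \ref{singularities} tells us the singularities are of saddle type, and in particular (being isolated in the compact surface) there are only finitely many of them. Condition (3), the density requirement, is essentially Proposition \ref{separatrices densas2}: the union of the stable separatrices is dense in $S$, so taking the finitely many $x_j$ to be points one on each stable separatrix (there are finitely many separatrices by the saddle-type conclusion) gives a dense union $\cup_j \phi_{\R^+}(x_j)$, which is even more than condition (3) demands.

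First I would address the apparent tension in condition (2), which speaks of \emph{source} singular points $p_j$ with basins of repulsion $\lambda_j$. Since expansive flows have singularities only of saddle type (Proposition \ref{singularities}), there are in fact \emph{no} source singularities, so the list $p_1,\dots,p_m$ is empty, $m=0$, and condition (2) is vacuously satisfied. Thus the real content is just conditions (1) and (3), both already in hand. I would write this out carefully: with $m=0$ the term $\cup_{i=1}^m\lambda_i$ disappears from condition (3), so density must be witnessed purely by the positive semi-orbits of finitely many points, which is exactly what the density of the (finitely many) stable separatrices provides.

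A subtlety worth a sentence or two: Proposition \ref{separatrices densas2} gives density of the \emph{stable} separatrices, and condition (3) of the Cherry definition uses $\phi_{\R^+}(x_j)$, i.e. positive semi-orbits. A stable separatrix of a singularity $p$ is by definition the orbit of a regular point $x$ with $\phi_t(x)\to p$ as $t\to+\infty$, so its closure equals the closure of $\phi_{\R^+}(x)\cup\{p\}$, and since there are finitely many singularities adding those finitely many points does not affect denseness in $S$ (the surface being perfect). Hence choosing one $x_j$ per stable separatrix makes $\cup_{j=1}^n\phi_{\R^+}(x_j)$ dense. I do not anticipate a genuine obstacle here; the main point requiring care is simply to recognize that the ``source singularity'' clause is vacuous for saddle-type flows, so that Proposition \ref{Cherry} is really a repackaging of Propositions \ref{singularities} and \ref{separatrices densas2} into the form demanded by the definition from \cite{Gutierre}. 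The corollary that expansive flows are smoothable then follows immediately from the cited theorem that Cherry flows are topologically equivalent to $C^\infty$ flows.
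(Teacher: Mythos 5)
Your proposal is correct and follows essentially the same route as the paper: finiteness of $\sing$, vacuity of the source condition, and density of the finitely many stable separatrices via Propositions \ref{singularities} and \ref{separatrices densas2}. The only cosmetic difference is that you rule out source singularities by appealing to the saddle-type conclusion of Proposition \ref{singularities}, whereas the paper cites Proposition \ref{wanderings} directly; both are valid and the paper's Proposition \ref{singularities} rests on Proposition \ref{wanderings} anyway.
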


\begin{proof}
 Expansiveness 
easily implies that $\sing$ is a finite set. By Proposition \ref{wanderings} we have that there are no source singular points, 
so item 2 of the definition of Cherry flow need not to be verified. In order to check item 3, notice that Proposition 
\ref{singularities} implies that there is a finite number of 
stable separatrices and by 
Proposition \ref{separatrices densas2} their union is dense in $S$.
\end{proof}

\begin{teo}\label{smooth}
Expansive flows on compact surfaces are topologically equivalent to $C^\infty$ flows.
\end{teo}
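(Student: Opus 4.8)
The plan is to read off Theorem \ref{smooth} as a direct consequence of Proposition \ref{Cherry} combined with the smoothing theorem of \cite{Gutierre}. Fix once and for all a $C^\infty$ structure on the compact surface $S$ (every compact topological surface admits one, unique up to diffeomorphism, so this costs nothing). Given an expansive flow $\phi$ on $S$, the first task is simply to point out that the three defining conditions of a Cherry flow have already been checked: finiteness of $\sing$ follows from expansiveness (as observed in the proofs of Lemma \ref{lemakequiv} and Proposition \ref{singularities}); condition (2) is vacuous since Proposition \ref{wanderings} forbids source singularities (an asymptotically repelling singularity would produce wandering points); and condition (3) holds because Proposition \ref{singularities} gives a finite number of stable separatrices while Proposition \ref{separatrices densas2} shows their union is dense in $S$. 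This is precisely the content of Proposition \ref{Cherry}, so $\phi$ is a Cherry flow on a $C^\infty$ compact surface.

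Next I would invoke the main result of \cite{Gutierre}: every Cherry flow on a $C^\infty$ compact surface is topologically equivalent to a flow of class $C^\infty$. Chaining the two facts, $\phi$ is topologically equivalent to a $C^\infty$ flow, which is exactly the assertion of the theorem. The write-up therefore amounts to one line after Proposition \ref{Cherry} has been established.

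The only points that I would treat with a bit of care are bookkeeping ones. First, one should confirm that the notion of orbit-preserving homeomorphism produced in \cite{Gutierre} coincides with our definition of topological equivalence, and that their vocabulary (``singular point'', ``separatrix'', ``basin of repulsion'') matches ours, so that the verification of conditions (1)--(3) in Proposition \ref{Cherry} really is the verification demanded by their hypotheses. Second, because our singularities are allowed to lie on $\partial S$, one should check that the cited smoothing result covers surfaces with boundary; if it is stated only for closed surfaces, the standard fix is to double $S$ along $\partial S$, note that the doubled flow is still Cherry, apply \cite{Gutierre} there, and restrict the smooth model back to $S$.

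I do not expect a genuine obstacle in this theorem: all the dynamical work is concentrated in Sections 2--4, and here one is only harvesting it through a known black-box smoothing theorem. If anything deserves the label ``main difficulty'', it is the careful matching of definitions between this paper and \cite{Gutierre}, together with the boundary case just mentioned.
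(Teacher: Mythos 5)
Your proposal is correct and follows exactly the paper's route: Proposition \ref{Cherry} establishes that expansive flows are Cherry flows, and the smoothing theorem of \cite{Gutierre} then gives the $C^\infty$ model. The paper's own proof is precisely this two-line combination, so your extra bookkeeping remarks are sensible but not a deviation.
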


\begin{proof}
 Just apply the results in \cite{Gutierre} and Proposition \ref{Cherry}.
\end{proof}

\section{Removable singularities}

In that section we deal with singularities of index zero. We show that they can be \emph{removed} or \emph{added} without
loss of expansiveness.

\begin{df}
\par Consider two flows $\phi$ and $\psi$ defined on the same surface $S$ and 
let $p\in\sing$ be a singular points of index 0 of $\phi$.
Suppose that:
\begin{enumerate}
\item $p$ is non-singular for $\psi$,
\item for all $x\notin\psi_\R(p)$ it holds
$\phi_\R(x)=\psi_\R(x)$ and
\item the direction of both flows coincide on each orbit.
\end{enumerate}
In that case we say that $\psi$ \emph{removes} the singularity $p$ of
$\phi$ or equivalently $\phi$ \emph{adds} a singularity to $\psi$. 
\end{df}

\par It is easy to see that every singular point of index 0 can be removed. Conversely, given any regular point $x\in S$
there exist a flow that adds a singularity of index 0 at $x$. In fact the only singular points that can be added or removed are those of saddle type of index 0.
That kind of singular points are also called \emph{impassable grains} or \emph{fake saddles} 
in the literature. 

\par On surfaces one can remove or add singularities without loosing expansiveness. 
The following proof 
works on compact metric spaces. The converse will be shown on surfaces in Theorem \ref{expsinsing2}

\begin{prop}\label{expsinsing} If $\psi$ is expansive and removes a singular point of $\phi$
then $\phi$ is expansive.
\end{prop}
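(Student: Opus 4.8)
The statement to prove is: if $\psi$ is expansive and $\psi$ removes a singular point $p$ of $\phi$, then $\phi$ is expansive. The natural strategy is to transfer an orbit-closeness configuration for $\phi$ into one for $\psi$, apply the expansiveness of $\psi$, and then translate the conclusion back. The key observation is that $\phi$ and $\psi$ have the \emph{same} oriented orbit foliation everywhere except at the orbit $\psi_\R(p)$: for $\phi$ this single $\psi$-orbit is broken into the singleton $\{p\}$ together with (at most) two regular half-orbits, while for $\psi$ it is one honest regular orbit through $p$. So the two flows differ only by a time reparametrization along orbits, and a singularity of index $0$ has exactly two separatrices which together with $\{p\}$ form $\psi_\R(p)$.

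First I would fix $\beta>0$ and, shrinking if necessary, assume $\beta<\beta_0(\phi)$; I would produce the expansive constant $\delta$ for $\phi$ from the expansive constant $\delta'$ that $\psi$ provides for a suitable smaller $\beta'$. The main point to establish is a comparison lemma: there is a continuous, orbit-preserving time-change relating $\phi$ and $\psi$, i.e. for each $x\in S$ there is an increasing homeomorphism $r_x\in\reparam$ (depending continuously enough on $x$) with $\phi_t(x)=\psi_{r_x(t)}(x)$, except that when $\phi_t(x)$ hits $p$ the $\phi$-orbit stalls while the $\psi$-orbit passes through; one handles this by noting $\psi$ reaches $p$ only along its two separatrices, so the set of exceptional points has empty interior and the $\phi$-trajectory that limits to $p$ corresponds to a $\psi$-trajectory that passes $p$ in finite time (index $0$), allowing a uniform reparametrization on compact time intervals away from a neighborhood of $p$. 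I expect this bookkeeping — making the reparametrizations uniform near $p$ — to be the main obstacle.

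Next, given $\dist(\phi_{h(t)}(x),\phi_t(y))<\delta$ for all $t$ and some $h\in\reparam$, I would compose $h$ with the reparametrizations $r_x, r_y$ to obtain some $\tilde h\in\reparam$ with $\dist(\psi_{\tilde h(t)}(x),\psi_t(y))$ controlled by $\delta$ (using uniform continuity of $\psi$ to absorb the small time errors coming from the passages near $p$, exactly in the spirit of conditions (\ref{epsilon1}) and (\ref{epsilon2}) in the proof of Theorem \ref{kequiv}). Applying the expansiveness of $\psi$ gives $\dist_\psi(x,y)<\beta'$, i.e. $x$ and $y$ lie on a common $\psi$-orbit segment of diameter $<\beta'$. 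Finally I would convert this back: a $\psi$-orbit segment joining $x$ and $y$ is, for $\phi$, either the same segment (if it avoids $p$) or the same point set with $p$ inserted, so $x$ and $y$ lie on a common $\phi$-orbit segment; enlarging $\beta'$ to $\beta$ absorbs the worst case where the segment must be extended slightly to remain a genuine $\phi$-orbit segment through $p$. This yields $\dist_\phi(x,y)<\beta$, as required, and by Theorem \ref{kequiv} (or directly from Definition \ref{expflow}) $\phi$ is expansive.

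One technical wrinkle worth flagging: when the common $\psi$-segment passes through $p$, one must check it is still possible to realize $x$ and $y$ as lying in a \emph{single} $\phi$-orbit segment of small diameter — this is where $\beta<\beta_0(\phi)$ and the fact that $p$ is an isolated singularity with only two separatrices are used, so that the $\phi$-orbit segment can be taken as the concatenation of a half-orbit into $p$, the point $p$, and a half-orbit out of $p$, whose diameter is still $<\beta$ because the original $\psi$-segment had diameter $<\beta'<\beta$. Modulo this, the argument is a routine transfer of the expansiveness inequality across a time reparametrization, which is exactly the kind of manipulation already carried out in Section 1.
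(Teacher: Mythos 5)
Your overall strategy—transfer the closeness hypothesis from $\phi$ to $\psi$ by a time change, apply expansiveness of $\psi$, translate back—is exactly what the paper does, but \emph{only} for the case where neither $x$ nor $y$ lies on $\psi_\R(p)$. The gap in your proposal is that the comparison lemma you rely on is false precisely on the orbit $\psi_\R(p)$: if $x$ lies on a separatrix of $p$, then the $\phi$-orbit of $x$ is a \emph{proper} subset of the $\psi$-orbit of $x$ (it converges to $p$ as $t\to+\infty$, say, but never reaches it), so there is no $r_x\in\reparam$ with $\phi_t(x)=\psi_{r_x(t)}(x)$ for all $t\in\R$; the image of $r_x$ would have to be a half-line. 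No amount of uniformity "on compact time intervals away from a neighborhood of $p$" repairs this, because the obstruction is at $t\to\pm\infty$, where one parametrization stalls and the other passes through $p$ in finite time. Consequently your transfer of the hypothesis $\dist(\phi_{h(t)}(x),\phi_t(y))<\delta$ into a $\psi$-statement simply does not go through when $x$ or $y$ lies on $\psi_\R(p)$, and these cases need a separate argument.

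The back-translation is also wrong in the same situation: if the small $\psi$-orbit segment joining $x$ and $y$ contains $p$ in its interior, then $x$ and $y$ lie on \emph{different} $\phi$-orbits (the stable separatrix, $\{p\}$, and the unstable separatrix are three distinct $\phi$-orbits), so by the paper's definition $\dist_\phi(x,y)=\diam(S)$; the "concatenation" you propose is not a $\phi$-orbit segment $\phi_{[a,b]}(z)$, and $\beta<\beta_0$ does not help. The correct resolution, and the one the paper uses, is to show that this configuration cannot occur at all: take a $\psi$-flow box $U$ around $p$ and $\delta<\frac12\dist(p,\partial U)$; if, say, $\phi_t(x)\to p$ while $y\notin\psi_\R(p)$ (or $y$ is on the opposite separatrix), then $\phi_{h(t)}(y)$ would have to remain $\delta$-close to points near $p$ for all large $t$, hence stay in $U$ forever, which is impossible since its orbit must leave the flow box. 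So the missing ingredient is this case analysis with the flow box around $p$ yielding a contradiction, rather than a uniform reparametrization plus concatenation.
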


\begin{proof}
Let $p\in X$ be the singular point that $\psi$ removes from
$\phi$. Consider a local cross section $l$ through $p$ and take $t^{*}>0$ such that $\psi|_{(-t^{*},t^{*})\times l}$ 
is a homeomorphism. Let $U=\psi_{(-t^*,t^{*})}(l)$ be a flow box.
For any $\beta>0$ consider an expansive constant $\delta>0$ of $\psi$
with $\delta<\frac12\dist(p,\partial U)$. Suppose that
$\dist(\phi_t(x),\phi_{h(t)}(y))<\delta$ for all $t\in\R$ for some
$h\in\reparam$. Now we study the only three possible cases.
\par Case 1. Suppose that $x,y\notin \psi_\R(p)$. In that case there exist $g_x,g_y\in\reparam$
such that $\phi_t(x)=\psi_{g_x(t)}(x)$ and
$\phi_t(y)=\psi_{g_y(t)}(y)$ for all $t\in\R$. Then $\dist(\psi_t(x),
\psi_{g_x^{-1}\circ h\circ g_y(t)}(y))<\delta$ for all $t\in\R$ being $g_x^{-1}\circ h\circ g_y\in\reparam$.
Therefore $\dist_\psi(x,y)<\beta$ by hypothesis. Since $x$ and $y$ are not in
$\psi_\R(p)$ we have that $\dist_\psi(x,y)=\dist_\phi(x,y)<\beta$.
\par Case 2. Consider $x\in \psi_\R(p)$ and $y\notin \psi_\R(p)$. Without loss of generality
we can suppose that $\phi_t(x)\to p$ as $t\to +\infty$. Notice that
for all $t'>0$ there exist $t>t'$ such that $\phi_t(y)\notin U$.
This contradicts that $\dist(\phi_t(x),\phi_{h(t)}(y))<\delta<\frac12\dist(p,\partial U)$ for all $t\in\R$.
\par Case 3. Assume that $x,y\in \psi_\R(p)$. It easily implies that $\dist_\psi(x,y)=\dist_\phi(x,y)<\beta$.
\end{proof}

\begin{lemma} \label{separatricesdensas}
Suppose that the singularities of $\phi$ are of saddle type. 
Then for all $\beta>0$ there exists $\delta>0$ such that if 
$\dist(\phi_t(x),\phi_{h(t)}(y))<\delta$ for all $t\in\R$ with $\dist_\phi(x,y)\geq\beta$ and $h\in\reparam$ 
then there exists a local cross section $l$ containing $x$ and $y$ such that every separatrix
meeting $l$ between $x$ and $y$ is associated to a singularity of index 0.
\end{lemma}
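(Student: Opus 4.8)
The plan is to show that, once $\delta$ is small enough, no separatrix of a singularity of \emph{negative} index can cross a short local cross section through $x$ and $y$ at an interior point; since a saddle--type singularity has only hyperbolic sectors, its index $1-n_h/2$ (or $1-n_h$ when $p\in\partial S$) is $\le 0$, so being ``of index $0$'' and being ``not of negative index'' coincide here. The tool that does the work is Lemma~\ref{sillaexp}, applied in an adapted neighbourhood of the singularity ending that separatrix.

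I begin with reductions, each valid once $\delta$ is small. Since saddle--type singularities are isolated there are finitely many; fix an adapted neighbourhood $D_p$ for each and apply Lemma~\ref{sillaexp} in each $D_p$ with $\epsilon_p$ so small that an orbit arc of flow--length at most $\epsilon_p$ issuing from a point of $D_p$ cannot pass from one hyperbolic sector into another, obtaining an expansive constant $\delta_p$. If $x\in\fix$ then the hypothesis forces $\phi_\R(y)\subset B_\delta(x)$, so for $\delta$ below the radius of $D_x$ the local description of a saddle gives $y=x$ and hence $\dist_\phi(x,y)=0<\beta$, contrary to hypothesis; thus $x$, and likewise $y$, is regular. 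Also $x$ and $y$ cannot be locally on one orbit (that would give $\dist_\phi(x,y)<\beta$), so the product structure of a flow box at $x$ furnishes a local cross section $l$ through $x$ and $y$, which after shrinking we may take of arbitrarily small diameter (shrinking $\delta$ accordingly). Let $l_{xy}\subset l$ be the subarc joining $x$ and $y$, and further demand $\delta<\min_p\delta_p$, $\delta<\min_p\dist(p,\partial D_p)$, and $\diam(l_{xy})<\min_p\dist(p,\partial D_p)$.

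Now suppose, for contradiction, that some separatrix $\ell$ meeting $l_{xy}$ at an interior point $q$ is associated with a singularity $p$ of negative index; reversing time if needed, assume $\ell$ is stable, so $\phi_t(q)\to p$. Shrinking $l_{xy}$ about $q$ we may assume $\ell\cap l_{xy}=\{q\}$, so that $x$ and $y$ lie strictly on opposite sides of $\ell$. Pick $T>0$ with $\phi_T(q)$ deep inside $D_p$, on $W^s(p)$ and far from $\partial D_p$. Since $x,y,q$ all lie on the short arc $l_{xy}$ and $\phi$ is uniformly continuous on $[0,T]\times S$, for $\delta$ small both $\phi_T(x)$ and $\phi_T(y)$ lie in $D_p$ near $\phi_T(q)$ and, being on opposite sides of $W^s(p)$, in two distinct hyperbolic sectors $U_1\ne U_2$. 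Put $x'=\phi_T(x)\in U_1$ and $y'=\phi_{h(T)}(y)$; then $\dist(\phi_t(x'),\phi_{\tilde h(t)}(y'))<\delta$ for all $t$ with $\tilde h(t)=h(t+T)-h(T)\in\reparam$, and $\dist(x',y')<\delta<\dist(p,\partial D_p)$ puts $y'\in D_p$; moreover $y'\in U_2$ (see the next paragraph). Now Lemma~\ref{sillaexp}, applied to $x',y'\in D_p$, must hold in one of its two forms. The first, that $x'$ and $y'$ lie in a single hyperbolic sector, is impossible since $U_1\ne U_2$. The second gives $\phi_{\tilde h(t_0)}(y')=\phi_{t_0+s}(x')$ with $|s|<\epsilon_p$, so $y'$ lies on the orbit of $x'$; but that orbit, as long as it stays in $D_p$, cannot leave $\clos(U_1)$ (the separatrices bounding the sectors being invariant), so $y'\in\clos(U_1)$, again contradicting $y'\in U_2$. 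Hence no such $\ell$ exists and $l$ is a cross section as required.

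The point that requires genuine care — and which I expect to be the main obstacle — is the claim $y'=\phi_{h(T)}(y)\in U_2$: a priori the reparametrization $h$ could place $\phi_{h(T)}(y)$ on the $U_1$--side of $W^s(p)$, were the orbit of $y$ to revisit the region near the deep point $\phi_T(q)$ from that side. This is ruled out by choosing $\phi_T(q)$ far inside $D_p$ on $W^s(p)$, so that an orbit can come $\delta$--close to it only while shadowing $W^s(p)$ in toward $p$, which the orbit of $y$ does (for $\delta$ small) only on the $U_2$--side, as it shadows the orbit of $q$. Everything else is a routine use of the uniform continuity of the flow, of Lemma~\ref{sillaexp}, and of the local analysis of saddle singularities recalled just before the statement.
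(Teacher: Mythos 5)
Your reduction to ``no separatrix of a negative-index singularity crosses $[x,y]$'' is the right target, and invoking Lemma~\ref{sillaexp} in an adapted neighbourhood is the right tool, but the way you get the two orbits into $D_p$ has a genuine uniformity gap. You pick $T$ with $\phi_T(q)$ deep inside $D_p$ and then shrink $\delta$ so that, by uniform continuity of $\phi$ on $[0,T]\times S$, $\phi_T(x)$ and $\phi_T(y)$ land near $\phi_T(q)$. But $\delta$ must be fixed before $x$, $y$ (hence before $q$ and $T$) are given, and $T$ is not bounded: $q$ can sit arbitrarily far back along the stable separatrix, so the time needed for its orbit to become trapped in $D_p$ is unbounded over the configurations the lemma must handle. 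Worse, the hypothesis controls only $\dist(\phi_t(x),\phi_{h(t)}(y))$; it gives no control at all on $\dist(\phi_t(x),\phi_t(q))$, so nothing forces the orbits of $x$ and $y$ to shadow the separatrix of $q$ into $D_p$. (Your acknowledged worry about $\phi_{h(T)}(y)$ versus $\phi_T(y)$ is a symptom of the same problem.) The paper avoids following the separatrix point forward altogether: it covers $S$ by flow boxes and adapted neighbourhoods of diameter less than $\beta$, propagates a chain of short cross sections $l_n$ joining $\phi_{t_n}(x)$ to $\phi_{h(t_n)}(y)$ through consecutive elements of the covering, and shows that a point $z\in[x,y]$ on a separatrix of a negative-index saddle would make $z_n=\phi_{s_n}(z)\in l_n$ converge, dragging $x_n$ and $y_n$ into a contradiction with the previously established fact that $x$ and $y$ are not themselves on separatrices.

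Second, your treatment of the ``same orbit'' branch of Lemma~\ref{sillaexp} is incorrect. The conclusion $\phi_{\tilde h(t_0)}(y')=\phi_{t_0+s}(x')$ holds for \emph{some} $t_0$, which may be large; the orbit arc from $x'$ to $\phi_{t_0+s}(x')$ need not stay in $D_p$, so you cannot conclude $y'\in\clos(U_1)$ --- the orbit can leave $D_p$ and re-enter through a different sector. The way the paper dispatches this branch is to combine it with Remark~\ref{coroexpkomuro} (the quantitative statement extracted from the proof of Theorem~\ref{kequiv}) to deduce $\dist_\phi(x,y)<\beta$, contradicting the standing hypothesis $\dist_\phi(x,y)\ge\beta$. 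Note that your argument never uses that hypothesis except to conclude $x\ne y$; that is a warning sign, since it is precisely what rules out $x$ and $y$ lying far apart on a common orbit.
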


\begin{proof} Consider any value of $\beta>0$ given. Since singularities of saddle type are isolated by definition, 
$\sing$ is a finite set and $$\beta_0=\inf\{\diam(\phi_\R(x)):x\notin\sing\}>0.$$ So we can apply the result stated in Remark \ref{coroexpkomuro}
and we have the constants $\epsilon$ and $\delta''$ given there.
For each singular point $p$ take an adapted neighborhood $D_p$. Suppose that $\diam(D_p)<\beta$ for all $p\in\sing$.
For each
singular point $p$ take the value $\delta_p$ given by Lemma
\ref{sillaexp} (considering the value of $\epsilon$ already fixed).
Consider the flow boxes $U_i$ given by
Lemma \ref{cubrimiento}. Suppose that $\mathcal C$ is a finite covering of $S$ made with $D_p$, $p\in\sing$, 
and a finite number of flow boxes $U_i$, $i=1,\dots,N$.
Eventually subdividing the flow boxes we can assume that $\diam(U_i)<\beta$
for all $i=1,\dots,N$.
We can also suppose that the
intersection of any two open sets in $\mathcal C$ is connected or
empty. Also take $\delta'$ such that if the diameter of
$X\subset S$ is less than $\delta'$, then $X$ is contained in some
open set of $\mathcal C$. Finally define $\delta =\min\{\delta',\delta'',\delta_p\}_{p\in\sing}$.

\par To show that $\delta$ works
suppose that $$\dist(\phi_t(x),\phi_{h(t)}(y))<\delta$$ for all
$t\in\R$ and some $h\in\reparam$. 

\par First we will show that there exists a local cross section through $x$ and $y$. 
Since $\dist(x,y)<\delta\leq \delta'$
we have that $x,y\in U$ for some $U\in\mathcal C$. 
Since $\diam (U)<\beta$ and $\dist_\phi(x,y)\geq\beta$ we have that $x$ and $y$ are 
not in an orbit segment contained in $U$.
If $U$ is a flow box or an adapted neighborhood of a singular point of index 0 
the prove is trivial.
Suppose that $x,y\in D_p$ for some
$p\in\sing$ of negative index. Again, if $x$ and $y$ are in the same hyperbolic sector the proof is easy.
Supposing that this is not the case we will arrive to a contradiction. Applying Lemma \ref{sillaexp} we have that there exist $t_0,s\in\R$ such that $\phi_{h(t_0)}(y)=\phi_{t_0+s}(x)$
and $|s|<\epsilon$. Then we can apply the result stated in Remark \ref{coroexpkomuro} to conclude that $\dist_\phi(x,y)<\beta$
arriving to a contradiction. 

\par The previous argument also shows that if $\phi_t(x),\phi_{h(t)}(y)\in D_p$ 
for some $t\in\R$ and $p\in\sing$ of negative index
then $\phi_t(x)$ and $\phi_{h(t)}(y)$ belongs to the same hyperbolic sector in $D_p$.
In particular $x$ and $y$ are not in separatrices. Moreover $\phi_t(x)$ and $\phi_{h(t)}(y)$
can be connected with a local cross section for all $t\in\R$.

\par Now consider an increasing divergent sequence $\{t_n\}_{n\in\N}$, $t_0=0$, such
that for all $n\in\N$ there exist $i(n)$ such that
$$\phi_{[t_n,t_{n+1}]}(x)\cup \phi_{[h(t_n),h(t_{n+1})]}(y)\subset V_{i(n)}\in\mathcal C,$$ 
where $V_{i(n)}$ may be a flow box or an adapted neighborhood $D_p$. Consider in $V_{i(n)}\cap V_{i(n+1)}$ a
local cross section $l_n$ that connects $x_n=\phi_{t_n}(x)$ and
$y_n=\phi_{h(t_n)}(y)$. Now take any
point $z\in l_0=[x,y]\subset l$. Suppose that $z$ do not belong to a separatrix of a singularity of index 0. 
Consider a sequence $s_n\in\R$ such that
$z_n=\phi_{s_n}(z)\in l_n$ for all $n\geq 0$. It exist because if
$x_n$ and $y_n$ are in $D_p$ both points are in the same hyperbolic sector. If $z_n$ is convergent then
$x_n$ and $y_n$ should be convergent too. But this is a contradiction because $x$ and $y$ are not in separatrices.
Then $z_n$ is not convergent and $z$ do not belong to any separatrix. 
\end{proof}

\begin{teo}\label{expsinsing2}
 Suppose that $\psi$ removes a singular point of $\phi$. Then $\psi$ is expansive if and only if $\phi$ is expansive.
\end{teo}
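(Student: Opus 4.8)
The implication ``$\psi$ expansive $\Rightarrow\phi$ expansive'' is Proposition \ref{expsinsing}, so it suffices to prove the converse. Assume $\phi$ is expansive and let $p$ be the index $0$ singularity removed by $\psi$. By Proposition \ref{singularities} every singularity of $\phi$ is of saddle type; having index $0$, the point $p$ has exactly two hyperbolic sectors, one stable separatrix and one unstable separatrix, and $\psi_\R(p)=W^s(p)\cup\{p\}\cup W^u(p)$. By conditions (2)--(3) of the definition of ``removes'', off this single curve the flows $\phi$ and $\psi$ have the same oriented orbits, so on each orbit $O\neq\psi_\R(p)$ the change of parameter from $\psi$-time to $\phi$-time is an element $g_O\in\reparam$. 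Moreover, $\phi$ being expansive, it has finitely many singularities and $\beta_0^\phi>0$; since the orbits of $\psi$ are those of $\phi$ other than $W^s(p)\setminus\{p\}$ and $W^u(p)\setminus\{p\}$, together with $\psi_\R(p)$, whose diameter is $\ge\beta_0^\phi$, the flow $\psi$ also has finitely many singularities and $\beta_0^\psi>0$.

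Fix $\beta>0$; we may assume $\beta<\beta_0^\psi$ and $\beta<\diam(D_p)$ for an adapted $\phi$-neighborhood $D_p$ of $p$ contained in a $\psi$-flow box $U$ around $p$. Let $\delta_0$ be a $\phi$-expansive constant for some $\beta'\in(0,\beta)$ and fix $0<\delta<\delta_0$ with $B_{2\delta}(p)\subset D_p$; I claim $\delta$ is a $\psi$-expansive constant for $\beta$. So suppose $\dist(\psi_{h(t)}(x),\psi_t(y))<\delta$ for all $t$ and some $h\in\reparam$, and consider first the case $x,y\notin\psi_\R(p)$ (the case that $x$ or $y$ is a singularity being routine). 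Then the entire $\phi$-orbits of $x$ and $y$ are disjoint from $\psi_\R(p)$, and with $g_x,g_y$ as above one has $\psi_{h(t)}(x)=\phi_{g_x(h(t))}(x)$ and $\psi_t(y)=\phi_{g_y(t)}(y)$; writing $r=g_y(t)$ and $\widetilde h=g_x\circ h\circ g_y^{-1}\in\reparam$, the hypothesis becomes $\dist(\phi_{\widetilde h(r)}(x),\phi_r(y))<\delta$ for all $r$. Expansiveness of $\phi$ gives $\dist_\phi(x,y)<\beta'<\beta_0^\phi$, so $x$ and $y$ lie on a common arc of a $\phi$-orbit of diameter $<\beta'$; this arc is disjoint from $\psi_\R(p)$, hence is also an arc of a $\psi$-orbit, and therefore $\dist_\psi(x,y)\le\dist_\phi(x,y)<\beta$.

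It remains to handle the configurations with at least one of $x,y$ on $\psi_\R(p)$. For these $\dist_\psi(x,y)<\beta$ is either trivial --- when $x,y\in\psi_\R(p)$ are joined in $\psi_\R(p)$ by a sub-arc of diameter $<\beta$ --- or is precisely what we must contradict, since otherwise $x$ and $y$ lie on distinct $\psi$-orbits (so $\dist_\psi(x,y)=\diam(S)>\beta$) or lie at $\dist_\psi$-distance $\ge\beta$ on $\psi_\R(p)$. After translating along $\psi_\R(p)$ and shifting $h$, such a configuration reduces to one of: (a) a point $x\notin\psi_\R(p)$ with $\dist(\psi_{h(t)}(x),\psi_t(p))<\delta$ for all $t$; or (b) two points of $\psi_\R(p)$ at $\dist_\psi$-distance $\ge\beta$ that $\delta$-shadow one another. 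In case (a), the hypothesis at $t=0$ gives $x\in B_{2\delta}(p)\subset D_p$, so $x$ lies in a hyperbolic sector of $D_p$; translating the half-trajectories $\psi_t(p)$ for $t>0$ (along $W^u(p)$) and for $t<0$ (along $W^s(p)$) into $\phi$-time shows that the forward $\phi$-orbit of $x$ stays within $\delta$ of $W^u(p)\setminus\{p\}$ while its backward $\phi$-orbit stays within $\delta$ of $W^s(p)\setminus\{p\}$. Combining the structure of the hyperbolic sector at the index $0$ saddle $p$ with Lemma \ref{separatricesdensas} applied to $\phi$, and using that $\phi$ has no periodic orbits (Proposition \ref{expnoper}), one concludes that $x$ itself lies on a separatrix of $p$, i.e. $x\in\psi_\R(p)$ --- a contradiction. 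Case (b) is excluded by the same local analysis and expansiveness of $\phi$.

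The heart of the proof, and its only real obstacle, is the local fact used in case (a). The naive approach --- turn the $\psi$-tracking of $x$ against $\psi_\R(p)$ into a $\phi$-tracking of $x$ against a single separatrix and quote expansiveness of $\phi$ --- does not work, because $\psi_\R(p)$ crosses $p$ in finite $\psi$-time whereas $W^s(p)\setminus\{p\}$ and $W^u(p)\setminus\{p\}$ are two distinct $\phi$-orbits; the resulting change of parameter from $\psi$-time to $\phi$-time has a proper half-line as image, hence is not an element of $\reparam$, so expansiveness of $\phi$ cannot be invoked directly. What makes the argument go through is precisely that $p$ has index $0$: near $p$, the point $x$ lies in a hyperbolic sector whose backward orbit shadows $W^s(p)$ and whose forward orbit shadows $W^u(p)$ \emph{at the same time}, and it is this two-sided shadowing, fed into Lemma \ref{separatricesdensas}, that confines $x$ to a separatrix. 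One also has to exclude the possibility that $\psi_\R(p)$ is a periodic orbit of $\psi$ --- equivalently, that $W^s(p)$ and $W^u(p)$ form a homoclinic loop at $p$ --- since in that case $\psi$ would have a periodic point and fail to be expansive; this is incompatible with expansiveness of $\phi$.
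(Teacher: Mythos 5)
The forward implication and your Case ``$x,y\notin\psi_\R(p)$'' are correct and coincide with the paper's Case 1. Your reduction of the remaining configurations to (a) and (b) is reasonable, and you have correctly identified the real obstruction: along $\psi_\R(p)$ the change of parameter from $\psi$-time to $\phi$-time only covers a half-line, so expansiveness of $\phi$ cannot be quoted directly. But at exactly that point your argument stops being a proof. In case (a) the assertion that the two-sided shadowing, ``fed into Lemma \ref{separatricesdensas}'', confines $x$ to a separatrix of $p$ is unsupported: Lemma \ref{separatricesdensas} takes as hypothesis a pair of points that $\delta$-shadow each other \emph{under $\phi$ with a reparametrization in $\reparam$}, which is precisely what is unavailable here, and no other mechanism is given that forces $x$ onto $W^s(p)\cup W^u(p)$. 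The paper escapes this by running the entire converse as a contradiction with $\delta_n\to 0$: in this configuration one obtains a whole sequence $z_n\to p$ of points \emph{off} $\psi_\R(p)$, each $\delta_n$-shadowing the orbit of $p$; two of them with $\dist_\phi(z_{n_1},z_{n_2})$ bounded below then shadow each other by the triangle inequality, reducing to the already-settled first case. Your direct, single-$\delta$ formulation has no access to a second such point, and you offer no substitute for this step.

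Case (b) is a more serious gap. You dismiss it as ``excluded by the same local analysis'', but no local analysis can exclude it: near a fake saddle placed on a recurrent orbit of an irrational flow on the torus, exactly this configuration occurs. The paper's treatment (its Case 3) is genuinely global: Lemma \ref{separatricesdensas} shows that between the two inseparable points of $\psi_\R(p)$ there are no separatrices of non-removable singularities, hence the flow $\psi'$ obtained by removing \emph{all} index-$0$ singularities has a cross section free of separatrices; Lemma \ref{irrationaltorus} (using $\Omega=S$, no periodic orbits, finitely many singularities, all inherited from expansiveness of $\phi$) then forces $\psi'$ to be an irrational flow on the torus, and $\phi$, being that flow with finitely many fake saddles added, cannot be expansive --- the desired contradiction. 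So both of the substantive cases of the theorem are left unproved in your proposal; the overall strategy (direct estimate with a fixed $\delta$ plus purely local analysis at $p$) does not appear to be repairable without importing the sequential and global arguments above.
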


\begin{proof}
By Proposition \ref{expsinsing} it only rest to show the converse. 
By contradiction suppose that $\psi$ is not expansive and take
$\beta>0$, $x_n, y_n\in S$, $h_n\in\reparam$ and $\delta_n\to 0$ such that
$\dist(\psi_t(x_n),\psi_{h_n(t)}(y_n))<\delta_n$ for all $t\in\R$
and $\dist_\psi(x_n,y_n)>\beta$. We study the possible cases.
\par Case 1: $x_n,y_n\notin \psi_\R(p)$ for infinite values of
$n$. It contradicts the expansiveness of $\phi$ as was explained in case (1) of Proposition \ref{expsinsing}.
\par Case 2: $x_n\in \psi_\R(p)$ and $y_n\notin \psi_\R(p)$ for infinite values of $n$.
Let $t_n\in\R$ be such that $\psi_{t_n}(x_n)=p$ and define $z_n=\psi_{h_n(t_n)}(y_n)$.
Now consider $s=t-t_n$ and $g_n\in\reparam$ defined as $g_n(s)=h_n(s+t_n)-h_n(t_n)$. It is easy to check that
$\dist(\psi_s(p),\psi_{g_n(s)}(z_n))<\delta_n$ for all $s\in\R$. Taking $s=0$ we see that $z_n\to p$ since $\delta_n\to 0$. 
Therefore we can assume that $z_n\in U$ for all $n$ and it is easy to see that, eventually taking a subsequence of $z_n$,
there exist $\beta'>0$ such that
 $\dist_\phi(z_{n_1},z_{n_2})>\beta'$ if $n_1\neq n_2$. 
So $\dist(\psi_{g_{n_1}(s)}(z_{n_1}),\psi_{g_{n_2}(s)}(z_{n_2}))<\delta_{n_1}+\delta_{n_2}$ for all $s\in\R$.
Now it contradicts the expansiveness of $\phi$ because $z_{n_1}$ and $z_{n_2}$ are not in $\psi_\R(p)$.
\par Case 3: $x_n,y_n\in \psi_\R(p)$ for infinite values of $n$. 
We can suppose that $x_n=p$ for all $n$. Let $l$ be a transversal section of $\psi$ through $p$. 
Without loss of generality we can assume that $y_n\in l$ for all $n$. 
In order to apply Lemma \ref{separatricesdensas} we must notice that by Proposition \ref{singularities} the singularities of $\psi$
are of saddle type. 
Then we have that if $n$ is big enough, in the subsegment
$l'\subset l$ limited by $y_n$ and $p$
there are no separatrices of no removable singularities. 
Let $\psi'$ be a flow that removes every removable singular point
of $\psi$.
Then we have that there are not separatrices of $\psi'$ in $l'$. 
Since $\phi$ is expansive we have that: 1) by Proposition \ref{wanderings}, $\Omega(\phi)=S$ and so
$\Omega(\psi')=S$; 2) by Proposition \ref{expnoper} $\phi$ do not have periodic points and then
$\psi'$ has the same property and 3) the singular set of $\psi'$ is finite. 
Therefore we can apply Lemma \ref{irrationaltorus} to conclude that $\psi'$ is the suspension of an irrational rotation.
On the other hand $\phi$ is obtained from $\psi'$ adding singularities, 
and so it is easy to see that $\phi$ is not expansive.\qedhere
\end{proof}

\par Notice that the previous proof only used the fact that $S$ is a surface, instead of being a compact metric space,
just in case 3.

\begin{obs} \label{sing-boundary}
 Proposition \ref{expnoper} and Theorem \ref{expsinsing2} shows that expansive flows presents at least one 
singularity of negative index on each boundary component. 
\end{obs}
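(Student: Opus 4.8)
The plan is to argue by contradiction. Let $\Gamma$ be a boundary component of $S$ — necessarily a circle — and suppose it carries no singularity of negative index. Since each $\phi_t$ is a homeomorphism of $S$ it preserves $\partial S$, and as $\partial S$ has finitely many components, each open and closed in $\partial S$, the circle $\Gamma$ is invariant. Every singularity lying on $\Gamma$ is then a singularity of $\phi$ on $\partial S$, hence of saddle type by Proposition \ref{singularities}; such a boundary saddle has at least one hyperbolic sector (having only finitely many separatrices, these cannot fill a half-disc around it), so its index $1-n_h$ is $\le 0$. By our assumption it therefore equals $0$, i.e. every singularity on $\Gamma$ is a removable (fake) saddle.

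Next I would remove them. Since $\fix$ is finite, one can peel these singularities off one at a time: deleting one of them does not change the local picture at the others (they sit at positive distance from it), so each remains of index $0$ for the intermediate flow, and Theorem \ref{expsinsing2} keeps the flow expansive at every step. The outcome is an expansive flow $\psi$ on $S$ whose singular set misses $\Gamma$; equivalently, $\psi|_\Gamma$ is a fixed-point-free continuous flow on the circle $\Gamma$. Since $\psi$ is an expansive flow on a compact surface, Proposition \ref{expnoper} forbids periodic points, so it suffices to show that a fixed-point-free continuous flow on $S^1$ has a periodic orbit.

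For that last point: fix $x\in\Gamma$, identify $\Gamma$ with $\R/\Z$, and lift the loop $t\mapsto\psi_t(x)$ to a continuous $\tilde x\colon\R\to\R$ with $\tilde x(0)=0$. As $x$ is regular, $\tilde x$ is nonconstant near $0$. If $\tilde x(a)=\tilde x(b)$ for some $a<b$ with $\tilde x$ nonconstant on $[a,b]$, then $\psi_a(x)$ is periodic of period $b-a>0$. Otherwise $\tilde x$ is strictly monotone; it cannot converge to a finite limit $L$ as $t\to+\infty$, for then the image of $L$ in $\Gamma$ would be fixed by every $\psi_s$ (by continuity of $\psi$), contradicting that $\psi$ has no singularities on $\Gamma$; hence $\tilde x$ is unbounded and by the intermediate value theorem $\tilde x(t_1)\in\Z\setminus\{0\}$ for some $t_1$, so $\psi_{t_1}(x)=x$. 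Either way $\psi$ has a periodic point, contradicting Proposition \ref{expnoper}; this contradiction shows $\Gamma$ must carry a singularity of negative index. The only non-formal point is this last step — a purely $C^0$ fixed-point-free flow on $S^1$ has a periodic orbit — which must be handled by the lifting/monotonicity argument above rather than via a non-vanishing vector field; a secondary point to be careful with is checking that the iterated removal of the index-$0$ boundary singularities stays within the hypotheses of Theorem \ref{expsinsing2} at each stage.
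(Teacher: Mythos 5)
Your argument is correct and is exactly the one the paper intends: the remark gives no written proof beyond citing Proposition \ref{expnoper} and Theorem \ref{expsinsing2}, and your contradiction --- boundary singularities are saddles of index $1-n_h\le 0$ by Proposition \ref{singularities}, so under the assumption they are all of index $0$ and removable, leaving an expansive flow with a fixed-point-free restriction to the invariant boundary circle and hence a periodic orbit, contradicting Proposition \ref{expnoper} --- is precisely the intended filling-in of the details. The care you take with the iterated removal and with the purely topological lifting argument for periodic orbits of a fixed-point-free flow on $S^1$ is all sound.
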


\section{Characterizations}

\par In this section $S$ denotes a compact surface and
$\phi\colon \R\times S\to S$ a continuous flow. The main result of this section is Theorem 
\ref{charexpsup}
that gives a characterization of expansive flows on surfaces. 
Theorem
\ref{expsinsing2} explains the generality lost supposing that the
flow do not has singularities of index 0.

\begin{teo}\label{charexpsup} 
Let $\phi$ be a flow without singularities of index 0
on a compact surface $S$. Then the following statements are equivalent:
\begin{enumerate}
 \item $\phi$ is expansive,
 \item $\sing$ is a finite and non empty set, $\Omega(\phi)=S$ and $\phi$ has no periodic points and
 \item the singularities are
of saddle type and the union of its separatrices is dense in the
surface.
\end{enumerate}
\end{teo}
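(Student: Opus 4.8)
The plan is to run the cycle of implications $(1)\Rightarrow(2)\Rightarrow(3)\Rightarrow(1)$. The first implication is just a matter of collecting the results of Sections~2--4: if $\phi$ is expansive then $\Omega(\phi)=S$ by Proposition~\ref{wanderings}, $\phi$ has no periodic points by Proposition~\ref{expnoper}, the singularities are of saddle type by Proposition~\ref{singularities} and hence isolated, so by compactness of $S$ the set $\sing$ is finite, and $\sing\neq\emptyset$ by Proposition~\ref{separatrices densas2}. For $(2)\Rightarrow(3)$, the singularities are of saddle type because the proof of Proposition~\ref{singularities} uses only that $\sing$ is finite (hence isolated) and that $\Omega(\phi)=S$, both of which are granted by $(2)$; and their separatrices have dense union, for otherwise the union of the \emph{stable} separatrices would fail to be dense too, and Lemma~\ref{irrationaltorus} --- applicable since $\phi$ has no periodic points, $\sing$ is finite and $\Omega(\phi)=S$ --- would force $S$ to be the torus with $\phi$ an irrational flow, which has no singularities, contradicting $\sing\neq\emptyset$.

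The substantive implication is $(3)\Rightarrow(1)$, and it is here that the hypothesis of no singularity of index $0$ enters; the technical input is Lemma~\ref{separatricesdensas}. Given $\beta>0$, let $\delta>0$ be the constant it provides (we may further shrink it; its standing assumption that the singularities are of saddle type is part of $(3)$). I claim $\delta$ is an expansive constant for $\beta$. Suppose $\dist(\phi_{h(t)}(x),\phi_t(y))<\delta$ for all $t\in\R$ and some $h\in\reparam$; after the change of variable $s=h(t)$ this becomes $\dist(\phi_s(x),\phi_{h^{-1}(s)}(y))<\delta$ for all $s\in\R$, which is exactly the hypothesis of Lemma~\ref{separatricesdensas} with $h^{-1}$ as reparametrisation. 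Assume, for a contradiction, $\dist_\phi(x,y)\geq\beta$; then $x\neq y$, and moreover both points are regular, since if $x\in\sing$ then $\dist(x,\phi_s(y))<\delta$ for all $s\in\R$ would push the whole orbit of $y$ into a small disc around $x$, forcing $y=x$ by the local analysis of Section~3. Lemma~\ref{separatricesdensas} now yields a local cross section $l$ through $x$ and $y$ along which every separatrix meeting $l$ strictly between $x$ and $y$ belongs to a singularity of index $0$; since there are none, no separatrix meets the open subarc of $l$ between $x$ and $y$. But the union of the separatrices, being dense in $S$, is dense in $l$ as well: an interior point $z$ of that subarc lies in the cross section of a flow box, some separatrix enters that box by density in $S$, and pushing its intersection along the flow back to $l$ produces separatrix points on $l$ arbitrarily close to $z$. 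This contradiction shows $\dist_\phi(x,y)<\beta$, so $\phi$ is expansive.

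I expect the genuine difficulty to lie not in this assembly but in the inputs that are already established --- principally Lemma~\ref{separatricesdensas} and, behind it, Lemma~\ref{sillaexp} on saddles of negative index. Within the present proof the only steps that need a little care are the observation that Proposition~\ref{singularities} used nothing beyond $\Omega(\phi)=S$ and finiteness of $\sing$, and the passage from density of the separatrices in $S$ to density along a cross section; both should be routine.
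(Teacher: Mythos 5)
Your proposal is correct and follows essentially the same route as the paper: $(1)\Rightarrow(2)$ by Propositions \ref{wanderings}, \ref{expnoper} and \ref{separatrices densas2}; $(2)\Rightarrow(3)$ via Lemma \ref{irrationaltorus} and the saddle-type analysis; and $(3)\Rightarrow(1)$ by taking the constant of Lemma \ref{separatricesdensas} and contradicting the density of the separatrices when no index-$0$ singularities exist. Your write-up is in fact more explicit than the paper's (the reparametrisation change of variable, the exclusion of singular $x,y$, and the passage from density in $S$ to density on the cross section), but these are elaborations of the same argument rather than a different one.
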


\begin{proof}
$(1)\Rightarrow(2)$
It is a consequence of Propositions \ref{separatrices densas2}, \ref{wanderings} and
\ref{expnoper}.
\par $(2)\Rightarrow(3)$
By Lemma \ref{muchassep} we have that the singularities are of saddle type because by hypothesis there are no wandering points.
The union of the separatrices is dense because Proposition \ref{separatrices densas2}.

\par $(3)\Rightarrow(1)$
Given any $\beta>0$, by Lemma \ref{separatricesdensas} we have the constant $\delta$ given there.
Such $\delta$ is an expansive constant by Lemma \ref{separatricesdensas} because the union of the separatrices is dense
and there are no singularities of index 0.
\end{proof}

\par
 Notice that expansiveness implies (2) and (3) with removable singularities too. But if one adds a finite number of singularities
of index 0 to 
an irrational flow on the torus we get a flow that satisfies (2) and (3) but it is not expansive. In fact this is the only possible case.

\par To state a characterization without restrictions we introduce the following concept.
 Let $p\in S$ be a singular point of negative index of $\phi$. Consider $\phi'$ that removes all the singularities of
index 0 from $\phi$. A $\phi'$-separatrix of $p$ is called an \emph{extended separatrix} of $p$ for the flow $\phi$.

\begin{teo}\label{charact2}
A flow $\phi$ on a compact surface is expansive if and only if the singularities are
of saddle type and the union of the extended separatrices of the singularities of negative index 
is dense in the
surface.
\end{teo}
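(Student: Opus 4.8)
The plan is to reduce Theorem \ref{charact2} to Theorem \ref{charexpsup} by means of Theorem \ref{expsinsing2}. Given a flow $\phi$ on a compact surface $S$, let $\phi'$ be the flow obtained from $\phi$ by removing all its singularities of index $0$; this is well-defined since index-$0$ saddles are the only removable singularities, and by Theorem \ref{expsinsing2} (applied finitely many times, one removal at a time) $\phi$ is expansive if and only if $\phi'$ is expansive. Note also that $\phi$ and $\phi'$ have the same singularities of negative index, that $\phi'$ has no singularities of index $0$ by construction, and that a $\phi'$-separatrix of a singularity $p$ of negative index is precisely an extended separatrix of $p$ for $\phi$. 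So it suffices to prove: $\phi'$ is expansive if and only if the singularities of $\phi'$ are of saddle type and the union of the (ordinary) separatrices of the singularities of negative index is dense in $S$.

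For the forward direction, if $\phi'$ is expansive then by Theorem \ref{charexpsup}(3) the singularities of $\phi'$ are of saddle type and the union of \emph{all} their separatrices is dense. Since $\phi'$ has no index-$0$ singularities, every singularity is of index $\leq -1$ (a saddle-type singularity with $n_h$ hyperbolic sectors has index $1-n_h/2$, and index $1$ would force $n_h=0$, i.e.\ no separatrices — excluded, while index in $(-1,0)\cup(0,1)$ is impossible for $n_h\in\Z_{\geq 1}$; one should remark that index-$0$ saddles, with $n_h=2$, are exactly the ones being removed). Hence the union of separatrices of negative-index singularities equals the union of all separatrices, which is dense; this transfers back to $\phi$ verbatim since $\phi$ and $\phi'$ share these singularities and their separatrices. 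The saddle-type condition for $\phi$ itself needs a small remark: the removed singularities of $\phi$ were of index $0$, hence of saddle type, so \emph{all} singularities of $\phi$ are of saddle type.

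For the converse, assume the singularities of $\phi$ are of saddle type and the union of the extended separatrices of the negative-index singularities is dense. Passing to $\phi'$: its singularities are of saddle type (removing index-$0$ saddles leaves the others unchanged and creates none), and $\phi'$ has no index-$0$ singularities. The extended separatrices of $\phi$ are exactly the $\phi'$-separatrices of the negative-index singularities, and since $\phi'$ has no index-$0$ singularities, these are \emph{all} the separatrices of $\phi'$; thus their union is dense in $S$. By Theorem \ref{charexpsup}, (3) $\Rightarrow$ (1) gives that $\phi'$ is expansive, and then Theorem \ref{expsinsing2} gives that $\phi$ is expansive.

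The only genuinely delicate point — and the one I would make sure to state carefully rather than gloss over — is the bookkeeping of \emph{which} singularities survive the removal and \emph{why} the set of extended separatrices coincides, for the reduced flow $\phi'$, with the full set of separatrices: one must use that $\phi'$ has no index-$0$ singularities at all (so Theorem \ref{charexpsup} applies to $\phi'$), and that removing an index-$0$ singularity does not alter the orbit structure away from the (two half-)orbits through that point, so separatrices of other singularities are unaffected and the two half-orbits that passed through the removed point simply join into the single orbit that is (a piece of) an extended separatrix. A secondary point is justifying that Theorem \ref{expsinsing2}, stated for a single removal, iterates: since there are finitely many index-$0$ singularities (finiteness of $\sing$ follows from saddle-type, or from expansiveness), one removes them one at a time, and expansiveness is preserved in each direction at each step.
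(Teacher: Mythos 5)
Your proposal is correct and follows essentially the same route as the paper: pass to the flow $\phi'$ obtained by removing all index-$0$ singularities, transfer expansiveness back and forth via Theorem \ref{expsinsing2}, apply Theorem \ref{charexpsup} to $\phi'$, and use the key identification that the union of the $\phi'$-separatrices equals the union of the extended separatrices of the negative-index singularities of $\phi$. Your write-up merely makes explicit the bookkeeping (iterating the removal, which singularities survive) that the paper leaves implicit.
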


\begin{proof}
 Consider $\phi'$ the flow obtained from $\phi$ by removing every singularity of index 0. 
The proof follows easily from Theorem \ref{charexpsup}, Theorem \ref{expsinsing2} and the following fact: 
the union of the separatrices of $\phi'$ is equal to the union of the extended separatrices of the singularities of negative index.
\end{proof}

\par Now we are going to give a characterization of the surfaces admitting expansive flows. For this we will introduce some surgery tools.
Let $S$ and $S'$ be two compact surfaces and $\phi\colon S\times \R\to S$ and 
$\phi'\colon S'\times \R\to S'$ be two continuous flows. 
A \emph{semi-conjugacy} from $\phi'$ to $\phi$ is a surjective and continuous map $h\colon S'\to S$ such that 
$\phi_t\circ h=h\circ \phi'_t$ for all $t\in\R$. If $h$ is a homeomorphism then it is called a \emph{conjugacy}.

\par If $\gamma\subset S\setminus\partial S$ and $\gamma'_1,\gamma'_2\subset
\partial S'$ are saddle connections then we say that $\phi'$ \emph{makes a cut along} $\gamma$ if there exists
a semi-conjugacy $h\colon S'\to S$ such that 
$h\colon S'\setminus \clos(\gamma'_1\cup\gamma'_2)\to S\setminus\clos(\gamma)$ is a conjugacy from
$\phi'|_{S'\setminus \clos(\gamma'_1\cup\gamma'_2)}$ to 
$\phi|_{S\setminus\clos(\gamma)}$. In that case we also say that $\phi'$ \emph{glues} $\gamma'_1$ with $\gamma'_2$.

\begin{df}
 Given a flow $\phi$ on $S$ we say that another flow $\phi'$ on $S'$ is obtained from $\phi$ by a \emph{basic operation}
if it is obtained by adding o removing a singularity of index 0, 
by gluing saddle connections or cutting along a saddle connection.
\end{df}

\par It is easy to see that every saddle connection not contained in the boundary of the surface, can be cut.
Also, every pair of saddle connections in the boundary can be glued.

\begin{obs}\label{addboundary}
We will show a special way to add a boundary with basic operations. That construction will be useful in Theorem \ref{toposupexp}. 
Take two points $p$ and $q=\phi_t(p)$, $t>0$, in a regular orbit and put singularities of index 0 in them. 
Make a cut on the saddle connection determined by $p$ and $q$.
Let $\gamma$ and $\gamma'$ be the saddle connections in the boundary connecting $p$ and $q$. Put two singular points $r$ and $s$ 
of index 0 in $\gamma$. Now glue the saddle connections that starts in $p$. See
figure \ref{cajaconborde}. 
\end{obs}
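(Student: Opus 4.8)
The plan is to go through the four moves listed in the remark and check, one at a time, that each is a basic operation in the sense of the definition immediately preceding the remark, while keeping a running account of the effect on the underlying surface and on the flow. First, declaring the regular points $p$ and $q=\phi_t(p)$ (with $t>0$) to be singular of index $0$ is, by definition, the basic operation ``add a singularity of index $0$'' performed twice; it leaves $\phi$ unchanged and only reparametrizes the orbit $\phi_\R(p)$. After this, $\delta=\phi_{[0,t]}(p)$ is a saddle connection contained in $S\setminus\partial S$ joining the two fake saddles, so the basic operation ``cut along $\delta$'' applies and yields a flow $\phi_1$ on a surface $S_1$ together with a semi-conjugacy $h_1\colon S_1\to S$ which is a conjugacy off $\clos(\gamma\cup\gamma')$, where $\gamma,\gamma'\subset\partial S_1$ are the two copies of $\delta$. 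The key topological bookkeeping here is that slitting a surface along an embedded arc whose endpoints lie in the interior creates exactly one new boundary circle, namely the bigon $C_1=\gamma\cup\gamma'$ with the two vertices $p$, $q$; equivalently $S_1$ is $S$ with an open disc removed, so the Euler characteristic drops by one and the genus (orientable or not) is unchanged.

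Next, declaring two interior points $r$, $s$ of $\gamma$ to be singular of index $0$ is again ``add a singularity of index $0$'' twice; it subdivides $\gamma$ into the three saddle connections $[p,r]$, $[r,s]$, $[s,q]$ and changes nothing else. Finally, the two saddle connections of $\partial S_1$ issuing from $p$ are $[p,r]$ and $\gamma'$, and gluing them (the basic operation ``glue two saddle connections in the boundary''), via the flow-respecting homeomorphism with $p\mapsto p$ and $r\mapsto q$, produces the flow $\phi'$ on the final surface $S'$. Since $p$ is a corner of the bigon $C_1$ incident to exactly the two arcs $[p,r]$ and $\gamma'$, this identification pushes $p$ back into the interior and turns $[p,r]\cup\gamma'$ into an interior saddle connection from $p$ to the identified point $r\sim q$; what survives on the new boundary is the bigon $[r,s]\cup[s,q]$, whose vertices are $s$ and $r\sim q$. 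A check with Euler characteristic confirms that $S'$ still has exactly one more boundary component than $S$ and is obtained from $S$ by removing an open disc, and that $\phi'$ is conjugate to $\phi$ outside a flow box around $\delta$; a final local computation identifies the singular points lying on the new boundary and their indices, which one wants to come out as a single negative-index singularity together with index-$0$ ones, as required by Remark \ref{sing-boundary}.

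The routine part is matching each move with the definition of a basic operation and tracking $\chi$ together with the number of boundary components; orientability deserves only a remark, since the whole construction is carried out inside a flow box and hence makes sense on any compact surface. The step I expect to be the main obstacle is the local index bookkeeping at the end: one must follow how the hyperbolic sectors at $p$ and at $q$ are re-coordinatized by the cut (each of $p$ and $q$ acquiring two half-disc hyperbolic sectors, hence boundary index $-1$, once $\delta$ is slit open) and then how these sectors recombine after gluing $[p,r]$ to $\gamma'$, so as to certify that the new boundary carries precisely the singularity structure used later in Theorem \ref{toposupexp}. A direct analysis of the phase portrait near a saddle, with the additivity of indices (Poincar\'e--Hopf for flows on surfaces with boundary) serving as an arithmetic consistency check, should make this bookkeeping manageable.
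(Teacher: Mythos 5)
Your proposal follows the paper's construction step for step---the same four basic operations in the same order, with the same final picture ($p$ pushed back into the interior as a removable index-$0$ singularity and a remaining boundary bigon with vertices $s$ and $r\sim q$)---and the paper itself offers no justification beyond the verbal description and Figure \ref{cajaconborde}. Your additional Euler-characteristic and index bookkeeping is correct (the new boundary carries one singularity of index $-1$ and one of index $0$, consistent with Remark \ref{sing-boundary} and with what Theorem \ref{toposupexp} later uses) and merely makes explicit what the paper delegates to the figure.
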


\begin{figure}[htbp]
\input{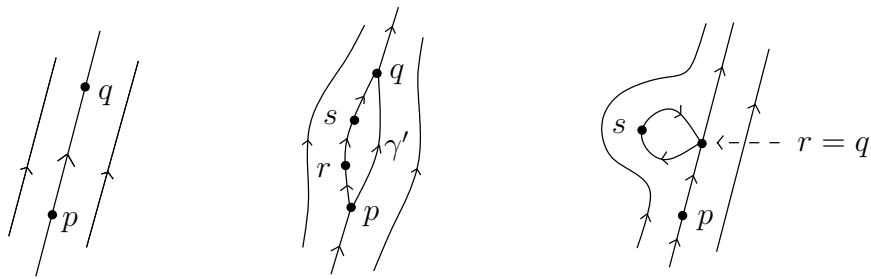}
    \caption{Adding a boundary.}
   \label{cajaconborde}
\end{figure}

Recall that every compact connected surface $S$ is obtained from the sphere attaching 
$h\geq0$ handles, $b\geq 0$ boundaries and $c\geq 0$ cross-cups. Denote by $S^{h,b,c}$ such surface.
It is well known that if $c\geq 3$ then $S^{h,b,c}$ is homeomorphic to $S^{h+1,b,c-2}$. 
So we will assume that $c=0,1,2$.
See for example \cite{Hirsch}

\begin{teo}\label{toposupexp}
A compact connected surface $S$ admits an expansive flow if and only if $h>0$ and $h+b+c>1$.
\end{teo}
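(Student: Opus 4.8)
The plan is to prove the two implications separately. By Theorem~\ref{smooth} we may assume throughout that the flows involved are of class $C^\infty$, so that the Poincar\'e--Hopf index formula and the Poincar\'e--Bendixson theorem are available in their classical form. I use the normalization $c\in\{0,1,2\}$ together with $\chi(S^{h,b,c})=2-2h-b-c$; an elementary check shows that under this normalization ``$h>0$ and $h+b+c>1$'' is equivalent to ``$h\ge1$ and $\chi(S)<0$'', and also to ``$h\ge1$ and $S$ is not the torus''.

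\emph{Necessity.} Suppose $\phi$ is expansive on $S$. By Propositions~\ref{wanderings}, \ref{expnoper}, \ref{singularities} and \ref{separatrices densas2}, the set $\sing$ is finite and non-empty, every singularity is of saddle type (in particular has finitely many separatrices), $\Omega(\phi)=S$, there are no periodic orbits, and the union of the separatrices is dense. A singularity of strictly positive index has at most one hyperbolic sector, hence -- having separatrices -- carries a parabolic or elliptic sector, and the orbits of such a sector form an infinite family of separatrices, which is absurd; so every singularity has index $\le0$. Replacing $\phi$ by the flow that removes all its singularities of index $0$ -- still expansive by Theorem~\ref{expsinsing2}, and defined on the same surface $S$ -- we may further assume that every singularity has strictly negative index, and at least one remains (otherwise $S$ would carry an expansive flow with no singularities, against Proposition~\ref{separatrices densas2}). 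Two consequences follow. First, $h\ge1$: if $h=0$, then $S$ is a sub-surface of the sphere, the projective plane or the Klein bottle -- the compact surfaces whose universal cover is the plane or the sphere -- so the Poincar\'e--Bendixson theorem applies and the non-wandering set of $\phi$, which has isolated singularities and no periodic orbits, is a finite union of singularities and saddle-connection arcs, a closed set with empty interior, contradicting $\Omega(\phi)=S$. Second, $S$ is not the torus: on the torus the sum of the indices of the singularities equals $\chi(S)=0$, impossible since there is at least one singularity and all have negative index. Therefore $h>0$ and $h+b+c>1$.

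\emph{Sufficiency.} Conversely, assume $h\ge1$ and $\chi(S)<0$. By Theorems~\ref{charexpsup} and~\ref{charact2} it suffices to construct on $S$ a flow whose singularities are all of saddle type and for which the union of the extended separatrices of the singularities of negative index is dense. I obtain such a flow from a suspension by the basic operations. For each $g\ge2$ there is a minimal (indeed uniquely ergodic) interval exchange map whose suspension is a closed orientable surface of genus $g$; since the surface is not the torus, the orbits are dense, there are no periodic points, and all singularities are saddles, Theorem~\ref{charexpsup} (or~\ref{charact2}) shows this suspension flow is expansive. Starting from such a flow on a closed orientable surface of suitable genus, I reach $S$ by a finite sequence of basic operations: adding singularities of index $0$ (which changes neither the surface nor expansiveness, by Theorem~\ref{expsinsing2}); introducing boundary components as in Remark~\ref{addboundary}; and making cuts along saddle connections followed by gluings, performed with a reflection whenever a cross-cap is needed. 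A cut along a saddle connection followed by a gluing is a topological equivalence away from the saddle connections, so it transports condition~(3) of Theorem~\ref{charexpsup} and expansiveness is preserved at every step; this is exactly the procedure later formalized in Theorem~\ref{structure}, which yields an expansive flow except when some component of the resulting surface is the torus -- and here $\chi(S)<0$ with $S$ connected, so that never occurs. Hence $S$ admits an expansive flow. The main difficulty is this last, purely topological, point: arranging the basic operations so as to realize every, and only every, connected surface with $h\ge1$ and $\chi(S)<0$, while keeping condition~(3) of Theorem~\ref{charexpsup} intact; on the necessity side the crucial step is the Poincar\'e--Bendixson dichotomy that forbids $\Omega(\phi)=S$ on the handle-free surfaces.
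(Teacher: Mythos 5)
Your necessity argument is essentially correct and runs parallel to the paper's. Where you invoke Poincar\'e--Bendixson on the handle-free surfaces, the paper cites Maier and Markley: orientable surfaces carrying non-trivially recurrent orbits have genus $\ge 1$, non-orientable ones genus $\ge 3$. These are the theorems you actually need, since for the Klein bottle (and for deducing that $\Omega(\phi)=S$ fails from the absence of non-trivial recurrence) the classical Poincar\'e--Bendixson theorem does not suffice as stated; your parenthetical ``surfaces whose universal cover is the plane or the sphere'' also wrongly includes the torus. The index argument ruling out the torus matches the paper, which phrases it as $\chi(S)<0$ once all index-$0$ singularities are removed.

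The genuine gap is in the sufficiency direction, and you have flagged it yourself: the flow is never constructed. The sentence ``I reach $S$ by a finite sequence of basic operations \dots performed with a reflection whenever a cross-cap is needed'' is exactly the assertion that has to be proved, namely that every $S^{h,b,c}$ with $h>0$ and $h+b+c>1$ is actually reachable while preserving condition (3) of Theorem \ref{charexpsup}. Theorem \ref{structure}, to which you defer, goes the other way (it decomposes a \emph{given} expansive flow) and says nothing about which surfaces are realized. The paper supplies the missing recipe explicitly: start from an irrational flow on the torus, open up $k=2h'+b+c'$ disjoint boundary components as in Remark \ref{addboundary}, so that each boundary circle $\gamma$ carries exactly two saddles $p,q$ and two regular arcs through points $a,b$ with $\omega(a)=\alpha(b)=\{q\}$ and $\omega(b)=\alpha(a)=\{p\}$; then glue boundary circles in pairs to create the $h'$ extra handles, and convert a single boundary circle into a cross-cap by the identification $p\sim q$, $\phi_t(a)\sim\phi_t(b)$. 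One then checks that each step preserves the density of the separatrices and that the resulting surface $S^{1+h',b,c'}$ is never the torus because $h'+b+c'>0$. Without this (or some equivalent explicit construction, e.g.\ a verified realization of each admissible surface by a suspension plus surgeries), the converse implication is not established.
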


\begin{proof}
\par (\emph{Direct.}) 
Suppose that $S$ admits an expansive flow. By Theorem \ref{expsinsing2} we can assume that this flow do not have removable singularities.
So the index of each singular point is negative. Then the Euler's characteristic of $\mathcal{X}(S)$ is negative.
Theorem \ref{charexpsup} implies that expansive flows has recurrent non-trivial orbits. 
If $c=0$, i.e. $S$ is orientable, then by the results in \cite{Maier} (see Theorem 2 in \cite{Markley}) 
we have that the genus of $S$ is positive
and then $h>0$. In the orientable case $\mathcal{X}(S)=2-2h-b$ and then $\mathcal{X}(S)<0$ implies $2<2h+b$. Therefore
$h+b>1$.
If $c>0$, the non-orientable case, we can apply Theorem 4 of \cite{Markley} to conclude that the genus of $S$ is greater than 2.
In the non-orientable case the genus of $S$ equals $c+2h$, then $c+2h>2$ and since $c\leq 2$ we have that $h>0$. 
Now $c>0$ and $h>0$ implies $c+h>1$.
\par (\emph{Converse.}) The conditions $h>0$ and $h+b+c>0$ are equivalent with saying that $S$ is a torus with
$b$ boundaries, $h'$ handles and $c'$ cross-cups attached such that $h'+b+c'>0$. So we will show that such surfaces admits expansive flows.
Given $k>$ we consider an expansive flow on the torus with $k$ boundaries. 
It can be done in the following way: take an irrational flow
on the torus, add $k$ disjoint boundaries as in Remark \ref{addboundary}.
In this way we get an expansive flow $\phi$ on $S=S^{1,k,0}$ such that if $\gamma$ is a component of $\partial S$ 
then $\sing\cap\gamma=\{p,q\}$ and there exist two regular points $a,b\in\gamma$ such that $\omega(a)=\alpha(b)=\{q\}$
and $\omega(b)=\alpha(a)=\{p\}$. 
\par Given $h',b,c'\geq 0$ with $h'+b+c'>0$ consider $k= 2h'+c'+b$. So $\partial S=\cup_{i=1}^{i=k}\gamma_i$ being 
each $\gamma_i$ homeomorphic to a circle. It is easy to see that two boundaries can be glued to obtain a handle without
loosing expansiveness. Also each boundary can make a cross-cup in the following way. Take $a,b,p,q\in \gamma_i$ as explained above.
Identify $p$ with $q$ and $\phi_t(a)$ with $\phi_t(b)$ for all $t\in\R$. 

\end{proof}

\begin{obs}
 The compact connected surfaces that do not admit expansive flows are: the torus, the sphere with $b$ boundaries, the projective plane with $b$ boundaries and 
the Klein's bottle with $b$ boundaries (with $0\leq b<\infty$ in the three cases).
\end{obs}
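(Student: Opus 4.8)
The plan is to prove the two implications separately, relying on the characterizations already obtained together with the classical theory of recurrent orbits on surfaces.

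\emph{The forward implication.} Suppose $S$ admits an expansive flow. By Theorem \ref{expsinsing2} I may assume this flow $\phi$ has no removable singularities, so every singular point has negative index; summing indices over $\sing$ then gives $\mathcal{X}(S)<0$. Next I would use Theorem \ref{charexpsup}: since $\sing$ is finite and non-empty, $\Omega(\phi)=S$ and there are no periodic orbits, the flow must possess a non-trivial recurrent orbit, because a dense union of separatrices cannot coexist with $\Omega(\phi)=S$ and the absence of periodic orbits unless some orbit is recurrent. In the orientable case $c=0$, Maier's theorem — as quoted in Theorem 2 of \cite{Markley}, building on \cite{Maier} — forces the genus of $S$ to be positive, hence $h>0$; combined with $\mathcal{X}(S)=2-2h-b<0$ this gives $2h+b>2$, so $h+b>1$ and a fortiori $h+b+c>1$. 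In the non-orientable case $c>0$, I would apply Theorem 4 of \cite{Markley} to conclude that the non-orientable genus $c+2h$ exceeds $2$; since the normalization gives $c\le 2$, this forces $h>0$, and then $h>0$ together with $c>0$ already yields $h+b+c>1$.

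\emph{The converse.} First I would reformulate the hypothesis: $h>0$ and $h+b+c>1$ say precisely that $S=S^{h,b,c}$ is a torus with $h'=h-1\ge 0$ handles, $b\ge 0$ boundary components and $c\ge 0$ cross-cups attached, subject to $h'+b+c>0$. Set $k=2h'+c+b$. Starting from an irrational flow on the torus, I would attach $k$ pairwise disjoint boundary components one at a time using the surgery of Remark \ref{addboundary}; since adding a singularity of index $0$ and cutting and gluing along saddle connections are basic operations, Theorem \ref{expsinsing2} together with the invariance of expansiveness under such operations shows that the resulting flow $\phi$ on $S^{1,k,0}$ is still expansive, and by construction each boundary circle $\gamma_i$ meets $\sing$ in exactly two points $p_i,q_i$ and carries two regular orbits $a_i,b_i$ with $\omega(a_i)=\alpha(b_i)=\{q_i\}$ and $\omega(b_i)=\alpha(a_i)=\{p_i\}$. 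I would then use $2h'$ of these boundaries, glued in pairs, to create $h'$ handles; use $c$ of them to create $c$ cross-cups by the identification $p_i\sim q_i$, $\phi_t(a_i)\sim\phi_t(b_i)$ for all $t\in\R$; and leave the remaining $b$ boundaries untouched. Each identification is again a basic operation, so expansiveness persists, and one checks that the surface produced is exactly $S^{h,b,c}$ and, crucially, that no connected component is a torus.

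The step I expect to be the main obstacle is the bookkeeping in the converse: verifying that each surgery in Remark \ref{addboundary} and each subsequent handle- or cross-cup-identification genuinely preserves expansiveness — which amounts to checking, after every step, that the singularities remain of saddle type and that the union of the (extended) separatrices stays dense, so that Theorem \ref{charexpsup} or Theorem \ref{charact2} applies — and that the final surface has the prescribed topological type with no torus component. The forward implication is comparatively routine once the reduction to negative-index singularities and the appeal to Maier's theorem are in place.
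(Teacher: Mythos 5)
Your argument is essentially the paper's own proof of Theorem \ref{toposupexp}, of which the stated remark is an immediate corollary, and it follows the same route: Euler characteristic plus the Maier/Markley results on recurrent orbits for the forward direction, and surgery on an irrational torus flow (via Remark \ref{addboundary} and Theorem \ref{expsinsing2}) for the converse. The only step you leave implicit is the trivial final enumeration: the connected surfaces failing ``$h>0$ and $h+b+c>1$'' are exactly those with $h=0$ (the sphere, the projective plane and the Klein bottle, each with $b$ boundaries, after normalizing $c\le 2$) together with the single case $h=1$, $b=c=0$, i.e.\ the torus.
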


\begin{teo}\label{charact3}
 Suppose that $S$ is a compact surface different of the torus. A flow $\phi$ on $S$ is expansive if and only if $\Omega(\phi)=S$,
$\phi$ do not has periodic orbits and $\sing$ is finite.
\end{teo}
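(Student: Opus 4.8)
The plan is to obtain Theorem \ref{charact3} as a corollary of the two earlier characterizations, Theorem \ref{charexpsup} and Theorem \ref{expsinsing2}, together with Lemma \ref{irrationaltorus}. The forward implication needs no work and does not use the hypothesis $S\neq$ torus: if $\phi$ is expansive then $\Omega(\phi)=S$ by Proposition \ref{wanderings}, $\phi$ has no periodic orbits by Proposition \ref{expnoper}, and expansiveness forces the singularities to be isolated, hence finitely many since $S$ is compact. So the whole content is the converse.

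For the converse, assume $\Omega(\phi)=S$, that $\phi$ has no periodic orbits, and that $\sing$ is finite. I would first remark that every singularity of index $0$ of $\phi$ is a fake saddle: by the sectorial decomposition an isolated singularity of index $0$ has at least one hyperbolic sector, hence a nonempty set of separatrices, and it cannot have infinitely many of them, for otherwise Lemma \ref{muchassep} would give an asymptotically stable separatrix, producing an open set of wandering points and contradicting $\Omega(\phi)=S$. Consequently we may define $\phi'$ by removing, one at a time, all the (finitely many) singularities of index $0$ of $\phi$; applying Theorem \ref{expsinsing2} at each step, $\phi$ is expansive if and only if $\phi'$ is. By construction $\phi'$ has no singularity of index $0$, and $\sing(\phi')\subset\sing(\phi)$ is finite; moreover, exactly as in the proof of Theorem \ref{expsinsing2}, removing a fake saddle preserves $\Omega(\phi')=S$ and keeps $\phi'$ free of periodic orbits (the only way a periodic orbit could appear is that some removed fake saddle carried a homoclinic loop, but such a loop would again force wandering points, so it cannot occur).

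Now comes the one place where the hypothesis $S\neq$ torus is used. If $\sing(\phi')$ were empty, the union of the stable separatrices of $\phi'$ would be empty, hence not dense in $S$, and Lemma \ref{irrationaltorus} (which applies since $\phi'$ has no periodic orbits, finite singular set and $\Omega(\phi')=S$) would give that $S$ is the torus, a contradiction. Therefore $\sing(\phi')$ is finite and nonempty, so $\phi'$ satisfies condition (2) of Theorem \ref{charexpsup}; hence $\phi'$ is expansive, and therefore so is $\phi$.

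I expect the only genuinely delicate point to be checking that the passage from $\phi$ to $\phi'$ does not destroy the three hypotheses — concretely, that no periodic orbit is created and that $\Omega(\phi')=S$ is retained — since this is the one step that is invoked rather than proved; the ruling out of a homoclinic loop through a fake saddle under $\Omega(\phi)=S$ is the small argument that makes it rigorous. Everything else is a straightforward assembly of Theorems \ref{charexpsup} and \ref{expsinsing2} and Lemma \ref{irrationaltorus}.
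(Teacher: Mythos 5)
Your proof is correct and follows essentially the same route as the paper: the forward direction from Propositions \ref{wanderings} and \ref{expnoper}, and the converse by removing the index-$0$ singularities, reducing to the modified flow $\phi'$ via Theorem \ref{expsinsing2}, and using Lemma \ref{irrationaltorus} together with $S\neq$ torus to place $\phi'$ under Theorem \ref{charexpsup}. The only difference is that you spell out details the paper leaves implicit, namely that the index-$0$ singularities are indeed removable and that the removal preserves $\Omega=S$, the absence of periodic orbits, and finiteness of the singular set.
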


\begin{proof}
 \par \emph{Direct.} It is a consequence of Propositions \ref{wanderings} and
\ref{expnoper} and the fact that expansive flow on compact spaces has a finite number of singularities.
\par\emph{Converse.} Consider a flow $\phi'$ removing the singularities of index 0 from $\phi$. By Theorem \ref{expsinsing2} we just have to prove
that $\phi'$ is expansive. Notice that $\Omega(\phi)=\Omega(\phi')$, $\sing(\phi')$ is finite and $\phi'$ do not have periodic orbits. 
Since $S$ is not the torus we can apply Lemma \ref{irrationaltorus} to conclude that the union of the separatrices of $\phi'$ is dense
in the surface. Now since $\phi'$ has not singularities of index 0 we have that it is expansive.
\end{proof}

\begin{cor}
 If $\phi$ is an expansive flow on $S$ and $\phi'$ is a flow on $S'$ obtained from $\phi$ by a basic operation then $\phi'$ is expansive if and only if $S'$ is not the torus.
\end{cor}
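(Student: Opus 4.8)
The plan is to deduce the corollary from Theorem \ref{charact3} together with Theorem \ref{toposupexp}. First I would dispose of the case in which $\phi'$ is obtained from $\phi$ by adding or removing a singularity of index $0$: then $S'=S$, which is not the torus because it carries the expansive flow $\phi$ (Theorem \ref{toposupexp}), and Theorem \ref{expsinsing2} gives that $\phi'$ is expansive, so both sides of the asserted equivalence are true. Hence I may assume that $\phi'$ is obtained from $\phi$ by cutting along an interior saddle connection or by gluing two boundary saddle connections. In the first case there is a semi-conjugacy $h\colon S'\to S$ and in the second a semi-conjugacy $h\colon S\to S'$; in both cases $h$ restricts to a topological conjugacy between the complement of the closures of the saddle connections involved in $S$ and the corresponding complement in $S'$, and these complements are flow-invariant open sets.

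The core of the argument is to transport to $\phi'$ the three conditions appearing in Theorem \ref{charact3}: that $\sing(\phi')$ is finite, that $\phi'$ has no periodic point, and that $\Omega(\phi')=S'$. Finiteness of $\sing(\phi')$ is clear, since a basic operation only splits or identifies finitely many singularities in a bounded region near the affected saddle connections and $\sing(\phi)$ is finite. For periodic orbits, a periodic orbit of $\phi'$ is an embedded circle disjoint from every singularity and from every other orbit, hence disjoint from the closures of the saddle connections; on the invariant open set where $h$ is a conjugacy it would therefore correspond to a periodic orbit of $\phi$, contradicting Proposition \ref{expnoper}. For the non-wandering set I would first check that every point lying outside the closures of the affected saddle connections is non-wandering for $\phi'$: shrink a neighbourhood of such a point into the invariant open set where $h$ is a conjugacy, carry it over to a neighbourhood of the corresponding point for $\phi$, apply $\Omega(\phi)=S$ (Proposition \ref{wanderings}) to get a recurrence there, and carry that recurrence back through $h$. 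Since the closure of a saddle connection is an embedded arc or circle, hence nowhere dense in a surface, the set of points handled this way is dense in $S'$; as $\Omega(\phi')$ is closed, it must equal $S'$.

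It then only remains to argue by cases on $S'$. If $S'$ is not the torus, Theorem \ref{charact3} applies and the three transported conditions yield that $\phi'$ is expansive. If $S'$ is the torus, then $\phi'$ is not expansive, since the torus carries no expansive flow (Theorem \ref{toposupexp} and the remark following it). In either case the asserted equivalence holds. The step I expect to require the most care is the verification of $\Omega(\phi')=S'$ along the newly created boundary saddle connections; the density-and-closedness argument above is designed precisely to avoid a direct and fussier analysis of non-wandering behaviour at those points.
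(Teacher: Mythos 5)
Your proposal is correct and follows essentially the same route as the paper: the direct implication via Theorem \ref{toposupexp}, and the converse by reducing to Theorem \ref{charact3} after checking that basic operations preserve finiteness of $\sing$, absence of periodic orbits, and $\Omega=S$ (using Theorem \ref{expsinsing2} for the index-$0$ case). You merely supply the details (the conjugacy off the saddle connections, and the density-plus-closedness argument for the non-wandering set) that the paper dismisses as ``easy to see.''
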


\begin{proof}
\par\emph{Direct}. By Theorem \ref{toposupexp}, $S'$ can not be the torus if $\phi'$ is expansive.
\emph{Converse}. By Theorem \ref{expsinsing2} expansiveness is invariant under adding or removing singularities. 
It is easy to see that the non-wandering set do no change if one cuts or glue saddle connections. 
Also, periodic orbits can not be created with basic operations if $\phi$ is expansive and the singular set of $\phi'$ is finite.
So we conclude by Theorem \ref{charact3}.
\end{proof}

\section{Interval exchange maps}

In this section we will introduce the definition of \emph{expansive interval exchange map} and show its relation with the expansiveness 
of it \emph{suspension} flow.

Let $S^1=\R/\Z$ be the circle and consider
$A,B\subset S^1$ finite sets, we say that $f\colon
S^1\setminus A\to S^1\setminus B$ is an \emph{interval exchange map}
if it is an homeomorphism that preserves the Lebesgue measure of $S^1$. 
A point $a\in A$ is said to be \emph{singular} if $\lim_{x\to a^-}f(x)\neq \lim_{x\to a^+}f(x)$
and let $\sing_f$ be the set of singular points of $f$.
Denote by $\sing_f^*$ the set of points $x\in S^1$ such that there exists $n\geq 0$ with 
$f^n(x)\in \sing_f$. 
Defining $f^0(a)=a$ for a point $a\in A$ we have that $A\subset \sing_f^*$. 

\begin{df}
 We say that an interval exchange map is \emph{expansive} if 
there exist $\delta>0$ such that if $x, y\notin \sing_f^*$, $x\neq y$, then there exist $n\geq 0$ such that
$\dist(f^n(x),f^n(y))>\delta$.
\end{df}

\par Now consider a compact surface $S$. An embedded circle $\gamma\subset S$ 
is said to be a \emph{quasi-global cross section} for a 
flow $\phi$ if it is transversal to $\phi$
and intersects every regular orbit. If a flow $\phi$ on $S$ with a finite number of singularities has a quasi-global cross section 
such that its first return map is conjugated to an interval exchange map $f$
then $\phi$ it is said to be a \emph{suspension} of $f$. 
It is easy to see that the surface $S$ can not have boundary, that is because $\gamma$
intersects every non-singular orbit.

\begin{obs}
Consider a suspension $\phi$ of an interval exchange map $f$. Then $A=\sing_f$ if and only if $\phi$ has no singular point of index 0.
For simplicity we will assume that every point of $A$ is singular, so $A=\sing_f$. 
Consequently the suspensions considered will not have singularities of index 0.
\end{obs}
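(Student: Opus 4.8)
The plan is to set up a dictionary between the singular structure of $\phi$ and the cut structure of $f$, and then read off the equivalence. Fix a quasi-global cross section $\gamma$ for $\phi$ and a homeomorphism $h\colon\gamma\to S^1$ conjugating the first return map $R\colon\gamma^*\to\gamma$ to $f$, where $\gamma^*=\{x\in\gamma:\phi_{\R^+}(x)\cap\gamma\neq\emptyset\}$; thus $h(\gamma^*)=S^1\setminus A$ and, since $f(S^1\setminus A)=S^1\setminus B$, also $h(R(\gamma^*))=S^1\setminus B$. The first step is to describe the separatrices in terms of $\gamma$. If $q\in\gamma$ and $h(q)\in A$ then $q\notin\gamma^*$, so $\phi_{(0,\infty)}(q)$ misses $\gamma$; because $\gamma$ meets every regular orbit transversally, an orbit forward-disjoint from $\gamma$ has $\omega(q)\subset\sing$ (a regular point of $\omega(q)$ would force a later crossing of $\gamma$), and since $\sing$ is finite and $\omega(q)$ connected, $\phi_t(q)\to p$ for a unique $p\in\sing$: $q$ lies on a stable separatrix of $p$ and is its last intersection with $\gamma$. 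Conversely the last $\gamma$-intersection of any stable separatrix of any singularity is such a point, so $A$ is in bijection with the set of all stable separatrices of all singularities; symmetrically, using the backward flow and the points of $\gamma\setminus R(\gamma^*)$, $B$ is in bijection with the set of all unstable separatrices. The same argument -- an open family of orbits accumulating on a single singularity would have its $\gamma$-traces confined to the finite set $h^{-1}(A)\cup h^{-1}(B)$, and periodic orbits accumulating on a singularity would force $\gamma$ to accumulate on $\sing$ -- rules out sinks, sources, foci, centers and elliptic or parabolic sectors; hence every $p\in\sing$ is of saddle type, its separatrices alternate stable/unstable around $p$, it has $n(p)\ge 1$ stable and $n(p)$ unstable separatrices, $n_h(p)=2n(p)$ hyperbolic sectors, and index $1-n(p)$. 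In particular $\sum_{p\in\sing}n(p)=|A|$, and $p$ has index $0$ exactly when $n(p)=1$.

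Next I would determine, for $a\in A$ with associated singularity $p$, whether $a\in\sing_f$. Let $q^*=h^{-1}(a)$ be the last $\gamma$-intersection of the stable separatrix $\ell^s$ of $p$, and take an adapted neighbourhood $D_p$ as in Definition \ref{neighborhoodsingular}. For $x\in S^1$ close to $a$, the orbit of $h^{-1}(x)$ follows $\ell^s$ into $D_p$, crosses one of the two hyperbolic sectors of $p$ adjacent to $\ell^s$, leaves $D_p$ along the unstable separatrix bounding that sector, and then follows it to its first intersection with $\gamma$; so by continuity of the flow $\lim_{x\to a^-}f(x)$ and $\lim_{x\to a^+}f(x)$ are the $h$-images of the first $\gamma$-intersections of the two unstable separatrices $\ell^u_-,\ell^u_+$ of $p$ neighbouring $\ell^s$. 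If $n(p)=1$ then $\ell^u_-=\ell^u_+$, the two one-sided limits coincide, and $a\notin\sing_f$; if $n(p)\ge 2$ then, by alternation, $\ell^u_-\neq\ell^u_+$, so they have distinct first $\gamma$-intersections, the one-sided limits differ, and $a\in\sing_f$. Thus for every $a\in A$ we have: $a\in\sing_f$ if and only if the singularity associated with $a$ has index different from $0$.

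The two statements now combine at once. The identity $A=\sing_f$ means that every $a\in A$ is singular, which by the previous paragraph means that no singularity associated with a point of $A$ has index $0$; since every singularity is associated with at least one point of $A$ (as $n(p)\ge1$), this is precisely the assertion that $\phi$ has no singularity of index $0$. The main obstacle is the first step: making rigorous the correspondence between separatrices, hyperbolic sectors and the finite sets $A$ and $B$, and especially ruling out all non-saddle singularities. This rests on the flow-box covering of Lemma \ref{cubrimiento}, the no-return Lemma \ref{peixoto2}, the adapted neighbourhoods constructed in Section 3, and the hypothesis that $\gamma$ meets every regular orbit transversally; once that dictionary is established the equivalence is pure bookkeeping.
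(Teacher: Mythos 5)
The paper states this remark without proof, so there is no argument of the author's to compare yours against; judged on its own, your proof is correct and supplies exactly the justification the remark needs. The dictionary you build is the right one: $A$ is the set of last $\gamma$-intersections of stable separatrices, $B$ the set of first $\gamma$-intersections of unstable ones, the one-sided limits of $f$ at $a\in A$ are the first returns of the two unstable separatrices adjacent to the stable separatrix through $h^{-1}(a)$, and these coincide precisely when that singularity is a fake saddle ($n(p)=1$, index $1-n_h/2=0$). The step you rightly flag as the main obstacle --- ruling out non-saddle singularities --- does close: finiteness of $A\cup B$ bounds the number of separatrices (killing parabolic/elliptic sectors and stable or unstable equilibria, each of which would produce uncountably many), and since the compact transversal circle $\gamma$ lies at positive distance from $\sing$ while meeting every regular orbit, no closed or homoclinic orbit can be trapped near a singularity, which excludes centers and forces every sector to be hyperbolic; the alternation of stable and unstable separatrices around $p$ and the equality of their counts then follow from the incidence count on hyperbolic sectors. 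One detail worth making explicit if this were written out in full: the first return of a point of $\gamma$ near $h^{-1}(a)$ converges to the \emph{first} $\gamma$-intersection of the adjacent unstable separatrix because the adapted neighborhood $D_p$ can be chosen disjoint from $\gamma$ and the finite orbit segments joining $\gamma$ to $D_p$ and $D_p$ back to $\gamma$ are compact and disjoint from $\gamma$ except at their endpoints, so no earlier crossing can occur despite the unbounded transit time through the hyperbolic sector.
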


In \cite{CoboGutierrez} (Lemma 8) it is shown how interval exchange maps can be suspended.

\begin{teo}
Let $f$ be an interval exchange map and $\phi$ a suspension of $f$. 
The following statements are equivalent.
\begin{enumerate}
\item $\phi$ is expansive.
\item $f$ is expansive.
\item $\sing^*_f$ is dense in $S^1$.
\item $f$ has no periodic orbits and $\sing_f\neq\emptyset$.
\end{enumerate}
\end{teo}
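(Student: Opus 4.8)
The plan is to obtain the equivalence $(1)\Leftrightarrow(3)$ as a translation of Theorem \ref{charexpsup} through the correspondence between $\phi$ and $f$, and then to close a short cycle among the intrinsic conditions $(2)$, $(3)$, $(4)$ on $f$ by elementary one-dimensional arguments.

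First I would fix the dictionary between the suspension $\phi$ and its return map $f$ on the quasi-global cross section $\gamma\cong S^1$. Since $A=\sing_f$, the flow $\phi$ has no singularity of index $0$, and a suspension of an interval exchange map has only singularities of saddle type (this is part of the suspension construction, cf.\ \cite{CoboGutierrez}), so Theorem \ref{charexpsup} applies to $\phi$. As $\gamma$ meets every regular orbit one reads off: $\sing$ is finite, and nonempty iff $\sing_f\neq\emptyset$; the periodic orbits of $\phi$ correspond exactly to the periodic points of $f$; and the trace on $\gamma$ of a stable separatrix of $\phi$ is a backward orbit of the corresponding break point, so the set of points of $\gamma$ lying on a stable separatrix is exactly $\sing_f^*$, and by continuity of $\phi$ the union of the stable separatrices is dense in $S$ iff $\sing_f^*$ is dense in $S^1$. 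Combining this with Proposition \ref{separatrices densas2} (expansiveness forces the stable separatrices to be dense) and with Theorem \ref{charexpsup} $(3)\Rightarrow(1)$ (saddle type, no index $0$, and dense separatrices force expansiveness) gives $(1)\Leftrightarrow(3)$.

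It remains to prove $(3)\Rightarrow(2)\Rightarrow(4)\Rightarrow(3)$, using only that $f$ preserves Lebesgue measure and acts as a local isometry on every arc disjoint from its finitely many break points. For $(3)\Rightarrow(2)$ I argue by contradiction: if $f$ were not expansive there would be $x_k\neq y_k$ outside $\sing_f^*$ with $\dist(f^nx_k,f^ny_k)\le 1/k$ for all $n\ge 0$; letting $J_k$ be the short arc between $x_k$ and $y_k$, density of $\sing_f^*$ gives a least $m_k$ with $f^{m_k}(J_k)$ containing a break point $a_k$, and since $f^{m_k}|_{J_k}$ is then an isometry, $f^{m_k}(J_k)$ is an arc of length $\le 1/k$ around $a_k$; for $k$ large it contains no other break point, so $f^{m_k}x_k$ and $f^{m_k}y_k$ sit on opposite sides of $a_k$ at total distance $\le 1/k$, and one further application of $f$ carries them to within $1/k$ of the two distinct one-sided limits of $f$ at $a_k$, whence $\dist(f^{m_k+1}x_k,f^{m_k+1}y_k)\ge 2\delta_0-2/k$ with $2\delta_0=\min_{a\in\sing_f}\dist(\lim_{w\to a^-}f(w),\lim_{w\to a^+}f(w))>0$ --- contradicting $\dist(f^{m_k+1}x_k,f^{m_k+1}y_k)\le 1/k$ once $1/k<\delta_0$. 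For $(2)\Rightarrow(4)$: if $\sing_f=\emptyset$ then $f$ extends to a Lebesgue-preserving homeomorphism of $S^1$, i.e.\ an isometry, which is not expansive; and a periodic point of $f$ lies inside a whole arc of periodic points (disjoint from $\sing_f^*$) on which every iterate of $f$ is a local isometry, again contradicting expansiveness, so $f$ has no periodic points and $\sing_f\neq\emptyset$. For $(4)\Rightarrow(3)$: if some open arc $J$ missed $\sing_f^*$, then every $f^n(J)$ would avoid $\sing_f$ and be an arc of the fixed length $|J|$, so, as $S^1$ has finite length, two of them overlap; the corresponding $g=f^k$ then acts on $L:=f^m(J)$ as an isometry of $S^1$ meeting $L$, and either $g$ is a reflection, whence $g^2|_L=\mathrm{id}$, or a translation whose iterated images $L,g(L),g^2(L),\dots$ are equal-length arcs missing the nonempty finite set $\sing_f$, forcing the rotation number of $g$ to be rational and hence $g^q|_L=\mathrm{id}$ for some $q$; in either case $L$ consists of periodic points, contradicting $(4)$. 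Hence $\sing_f^*$ is dense, and with $(1)\Leftrightarrow(3)$ all four statements are equivalent.

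The only place where the topology of surfaces enters is the dictionary together with the appeal to Theorem \ref{charexpsup}; the point needing most care there is the identification of the traces of the separatrices with $\sing_f^*$ and the (construction-dependent) fact that a suspension of an interval exchange map has saddle-type singularities. The remaining arguments are one-dimensional and routine, the two delicate spots being to arrange, in $(3)\Rightarrow(2)$, that the small arc $f^{m_k}(J_k)$ straddles exactly one break point --- which is precisely where the shrinking bound $1/k$ is exploited --- and, in $(4)\Rightarrow(3)$, that the iterated-isometry alternative genuinely manufactures periodic orbits, the orientation-reversing case included.
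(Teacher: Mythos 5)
Most of your proposal tracks the paper closely: the equivalence $(1)\Leftrightarrow(3)$ via the dictionary and Theorem \ref{charexpsup}, your $(3)\Rightarrow(2)$ (which is the paper's argument, just run by contradiction with a shrinking bound $1/k$ instead of exhibiting the explicit expansive constant $\delta$ with $3\delta$ below the minimal interval length), and $(2)\Rightarrow(4)$ are all sound. The one place you genuinely depart from the paper is the closing implication: the paper proves $(4)\Rightarrow(1)$ by going back to the flow (measure preservation gives $\Omega(\phi)=S$, and Theorem \ref{charexpsup}(2), whose engine is the two-dimensional Lemma \ref{irrationaltorus}, does the rest), whereas you attempt a purely one-dimensional $(4)\Rightarrow(3)$. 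That attempt has a gap.

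The problem is the step where, having found $m$ and $k\geq 1$ with $L:=f^m(J)$ meeting $f^{m+k}(J)$, you treat $g=f^k$ as if it were the single isometry $T$ of $S^1$ that it restricts to on $L$. All you know is $g|_L=T|_L$; on the arc $g(L)=T(L)$ the map $g$ is the restriction of a possibly \emph{different} isometry, determined by which complementary intervals of $\sing_f$ the arcs $f^{m+k}(J),\dots,f^{m+2k-1}(J)$ happen to sit in. Consequently $g^2|_L$ need not equal $T^2|_L$ (so the reflection case does not give $g^2|_L=\mathrm{id}$), the sets $L,g(L),g^2(L),\dots$ are not the orbit of $L$ under a single rotation, and ``the rotation number of $g$'' is not defined, since $g$ is an interval exchange map rather than a circle homeomorphism. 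The conclusion you want is true, but to get it one needs an extra idea: for instance, enlarge $J$ to a \emph{maximal} open arc all of whose forward images avoid $\sing_f$ (maximality is where $\sing_f\neq\emptyset$ enters, since otherwise the chain of enlargements could exhaust $S^1$). For such a maximal $J$ one checks that if $f^n(J)\cap J\neq\emptyset$ then $f^n(J)=J$ (otherwise $J\cup f^n(J)$ would be a strictly larger arc with the same property), whence $f^n|_J$ is an isometry of $J$ onto itself and $f^{2n}|_J=\mathrm{id}$, producing periodic points; and if all forward images of $J$ are pairwise disjoint their total Lebesgue measure is infinite. Either repair your argument along these lines or follow the paper and close the cycle with $(4)\Rightarrow(1)$ via $\Omega(\phi)=S$ and Theorem \ref{charexpsup}(2).
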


\begin{proof}
\par (1 $\to$ 3) It follows by Theorem \ref{charexpsup} item (3).
\par (3 $\to$ 2) Let $I_1,\dots,I_n$ be the intervals exchanged by $f$. We will show that
if $\delta>0$ and $3\delta$ is less than the length of each $I_i$ 
then $\delta$ is an expansive constant for $f$. 
If $\dist(x,y)<\delta$ we denote by $(x,y)$ the smallest interval determined by $x$ and $y$.
In this case there is at most one singular point in $(x,y)$.
Now fix $x,y\notin \sing^*$ such that $\dist(x,y)<\delta$.
By hypothesis there exists a smallest $n\geq 0$ and a point $z'\in(x,y)$ such that 
$z=f^n(z')\in \sing_f$.
Let $f(z^\pm)=\lim_{u\to z^\pm} f(u)$. Without loss of generality suppose that $\dist(f^{n+1}(x),f(z^-)),\dist(f^{n+1}(y),f(z^+))<\delta$.
Since $z\in\sing_f$ we have that $\dist(f(z^-),f(z^+))\geq 3\delta$. Then it follows that $\dist(f^{n+1}(x),f^{n+1}(y))\geq\delta$.

\par (2 $\to$ 4) By contradiction suppose that $x$ is a periodic point of $f$. It is easy to see that there exists a neighborhood $U$ of $x$ 
such that $U\cap \sing_f^*=\emptyset$, moreover every point in $U$ is periodic. This easily contradicts the expansiveness of $f$. 
If $\sing_f=\emptyset$ then $f$ is an homeomorphism of the circle since we are assuming that $A=\sing_f$. 
This gives a contradiction too, because there are no expansive homeomorphisms on the circle, as proved in \cite{JU}.
\par (4 $\to$ 1) It follows by Theorem \ref{charexpsup} item (2).
\end{proof}

\par Now we shall study \emph{quasi-minimal} flows.

\begin{df}
A flow is said to be \emph{quasi-minimal} if there exist a finite set $X\subset S$ such that the orbit of each 
$x\in S\setminus X$ is dense. 
\end{df}

\begin{obs}
It is easy to see that $\phi$ is quasi-minimal if and only if every regular orbit is dense and $\sing$ is a finite set.
\end{obs}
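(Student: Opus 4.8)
The plan is to prove the two implications separately; neither of them uses the dynamical results developed above, only elementary facts about orbits, so the argument will be short.

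Assume first that every regular orbit is dense and that $\sing$ is a finite set. Then I would simply take $X=\sing$ in the Definition: $X$ is finite by hypothesis, and every point of $S\setminus X$ is regular, hence has a dense orbit; thus $\phi$ is quasi-minimal. Conversely, assume $\phi$ is quasi-minimal and fix a finite set $X\subset S$ with $\phi_\R(x)$ dense for all $x\in S\setminus X$. Since a compact surface is an infinite set, the orbit $\phi_\R(p)=\{p\}$ of a singularity $p$ is never dense, so $p\in X$; hence $\sing\subset X$ and in particular $\sing$ is finite. To see that every regular orbit is dense, let $x$ be regular. Its orbit is infinite (a topological circle if $x$ is periodic, and the injective image of $\R$ otherwise), so it cannot lie inside the finite set $X$; pick $y\in\phi_\R(x)\setminus X$. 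As $y$ lies on the orbit of a regular point it is itself regular, and since $y\notin X$ its orbit $\phi_\R(y)=\phi_\R(x)$ is dense, as desired.

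The statement is essentially immediate, and the only point that needs a word of care — the ``main obstacle'', such as it is — is that a regular point may happen to lie in the exceptional set $X$, so that quasi-minimality cannot be applied to it directly; this is circumvented by sliding along its orbit to a point outside $X$, which is possible precisely because regular orbits are infinite while $X$ is finite. The two auxiliary facts I would invoke explicitly are that every point on the orbit of a regular point is again regular (otherwise the original point would be a fixed point) and that a singleton is never dense in a surface.
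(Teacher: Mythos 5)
Your argument is correct and is precisely the elementary verification that the paper's ``it is easy to see'' alludes to (the paper supplies no written proof for this remark): take $X=\sing$ for one direction, and for the other note that singleton orbits are not dense while regular orbits are infinite, so one may slide off the finite exceptional set. Nothing further is needed.
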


A separatrix $\gamma$ whose $\omega$-limit and
$\alpha$-limit sets are singular points (may be the same)
is called a
\emph{saddle connection}.

\begin{lemma}\label{recdoslados} 
Suppose that $\Omega(\phi)=S$, $x\in S$ is not periodic and $\omega(x)$ is not a singular point.
Then if $l$ is an open transversal section with an extreme point
$x$ there exist $t>0$ such that $\phi_t(x)\in l$.
\end{lemma}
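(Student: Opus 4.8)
The statement asserts a one-sided recurrence property: if $\Omega(\phi)=S$, $x$ is not periodic, $\omega(x)$ is not a singularity, and $l$ is an open transversal with $x$ as one of its endpoints, then the positive orbit of $x$ returns to $l$. The plan is to argue by contradiction: suppose $\phi_t(x)\notin l$ for all $t>0$. First I would use that $\omega(x)$ contains a regular point $y$ (since $\omega(x)$ is not a singular point and we may invoke that there are only finitely many singularities, so $\omega(x)$, being invariant and infinite because $x$ is not periodic and $\Omega(\phi)=S$ forces recurrence, cannot be contained in $\sing$). Pick a compact transversal section $j$ through $y$, small enough that $j\cap \overline{l}=\emptyset$. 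Since $y\in\omega(x)$, the positive orbit of $x$ meets $j$ infinitely often, at a divergent sequence of times $r_n\to+\infty$ with $\phi_{r_n}(x)\in j$.

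The key geometric step is to produce a contradiction with $\Omega(\phi)=S$ by showing that a subsegment near $x$ inside $l$ is wandering, or more directly, to exploit the transversal $l$ having $x$ as an endpoint. Because $l$ is an open transversal and $x$ is an extreme point, there is a flow box $U$ around $x$ in which $l\cap U$ is a cross section on the boundary of $U$; points of $l$ slightly past $x$ flow into $U$, but $x$ itself is on the edge. The idea is: consider the first-return structure. Since $\phi_t(x)\notin l$ for $t>0$, I would build a continuous return-time function on a subsegment $[x,u)\subset l$ (using the flow-box covering of Lemma \ref{cubrimiento} and the argument scheme of Lemma \ref{peixoto2} / Proposition \ref{wanderings}) and show that either the orbit of $x$ returns to $l$ (contradiction) or return times blow up forcing $\omega(x)\subset\sing$ (contradiction with the hypothesis, via Lemma \ref{peixoto2}).

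Concretely: let $l'\subset l$ be a smaller half-open transversal with endpoint $x$, chosen so that $l'$ meets no separatrix of a singularity other than possibly at the behavior controlled by $\omega(x)$ being regular; this is possible since $\sing$ is finite. Points of $l'$ close to $x$ have long sojourns shadowing the orbit of $x$, so by the same argument as in Lemma \ref{peixoto2}, the return time function $\tau$ from $l'\setminus\{x\}$ back to $l$ (if defined) is continuous with finitely many discontinuities and cannot diverge as we approach $x$ unless $\omega(x)\subset\sing$. Since $\omega(x)$ is not a singularity, $\tau$ stays bounded near $x$, which means there is a sequence $z_k\to x$ in $l'$ with $\phi_{\tau(z_k)}(z_k)\in l$ and $\tau(z_k)\to\tau_0<\infty$; passing to the limit and using continuity of the flow gives $\phi_{\tau_0}(x)\in \overline{l}=l$ (here I use that $l$ being open means its closure in the natural section is still a section, and the limit point cannot escape). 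Since $x$ is not periodic, $\phi_{\tau_0}(x)\neq x$, and we must check $\tau_0>0$ — which holds because $x$ is an \emph{extreme} point of the open transversal $l$, so nearby points of $l'$ take positive time to come back to $l$ itself rather than returning instantly.

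\textbf{Main obstacle.} The delicate point is the endpoint behavior: handling the fact that $x$ is an \emph{extreme} point of $l$, not an interior point, so the usual first-return map is only defined on one side, and I must rule out the degenerate possibility that the ``return'' happens in zero time or that nearby orbits on the relevant side never come back while still keeping $\Omega(\phi)=S$. Making the continuity-of-return-time argument rigorous at this boundary endpoint — in particular verifying that the limiting return time $\tau_0$ is strictly positive and that the limit point genuinely lies on $l$ rather than merely on its closure outside the open arc — is where the real care is needed, and it is exactly where the hypotheses ``$l$ open'' and ``$x$ extreme'' are used.
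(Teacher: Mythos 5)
Your proposal is correct and follows essentially the same route as the paper: use $\Omega(\phi)=S$ and the finiteness of separatrices to get a subinterval of $l$ adjacent to $x$ on which the first-return map to $l$ is defined and continuous, invoke Lemma \ref{peixoto2} together with the hypothesis that $\omega(x)$ is not a singularity to keep the return times bounded as one approaches $x$, and pass to the limit to land $\phi_{\tau_0}(x)$ in $\overline{l}$. The endpoint subtlety you flag as the main obstacle is handled in the paper by the simple device of running the whole argument on a slightly shorter subsegment of $l$ with the same extreme point $x$, so that the only closure points to exclude are $x$ itself (ruled out since $x$ is not periodic) and a point interior to the original $l$.
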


\begin{proof}
Let $l^*=\{y\in l:\phi_{\R^+}(y)\cap l\neq\emptyset\}$ be the set of points of $l$ returning to $l$
and consider the first return map $f\colon
l^*\to l$. 
Suppose that $z$ is the extreme point of $l$ different of $x$.
Notice that if $f(y)\neq x$  and $f(y)\neq z$ then $y$ is an interior point of $l^*$.
Therefore $l^*$ has at least two non-interior points.
Let $(a,b)\subset l^*$ be a connected component of $l^*$. If
$a\notin l^*$ then by Lemma \ref{peixoto2}, we have that
$\omega(a)$ is a singular point and then $a\neq x$. 
Since $\Omega(\phi)=S$ we have by Lemma \ref{muchassep} that there is just a finite number of separatrices.
So there is just a finite number of points in $l\setminus l^*$ whose $\omega$-limit set is a singularity.
On the other hand if
$a\in l^*$ then either $f(a)$ is an extreme point of $l$ or $a$ is an extreme point of. 
The same consideration can be made for $b$.
So $l^*$ has finite number of connected components.
Also, since $\Omega(\phi)=S$, we have that $l^*$ is dense in $l$.
Then $x$ belongs to the closure of some connected component of $l^*$.
Now we can apply Lemma 
\ref{peixoto2} to conclude that the positive orbit of $x$ meets the closure of $l$.
Since $x$ is not periodic the only possible problem is that $x$ returns to $z$, the other extreme of $l$.
It can be solved considering a subsegment of $l$ from the beginning of the proof.
\end{proof}

\begin{prop}\label{minimal-iem}
The following statements are equivalent.
\begin{enumerate}
 \item $\phi$ is quasi-minimal.
 \item $\phi$ is a suspension of a minimal interval exchange map $f$.
 \item $\sing$ is finite, $\Omega(\phi)=S$, $\phi$ has neither periodic orbits nor saddle connections.
\end{enumerate}
\end{prop}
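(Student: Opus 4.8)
The plan is to establish the cycle of implications $(1)\Rightarrow(2)\Rightarrow(3)\Rightarrow(1)$, using the return-map technology already developed (Lemmas \ref{peixoto2}, \ref{recdoslados} and \ref{muchassep}) together with the classification of expansive flows in terms of suspensions of interval exchange maps from the previous section.

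For $(1)\Rightarrow(2)$, assume $\phi$ is quasi-minimal. First I would show $\phi$ has no boundary: a boundary component would be an invariant circle, forbidding dense orbits, so $S$ is a closed surface with $\sing$ finite. Since every regular orbit is dense, there are no periodic orbits, and every singularity has negative index (an index-$0$ singularity would, after removal, leave a flow still quasi-minimal on the same closed surface, so no generality is lost in assuming there are none). Picking a regular point and a small transversal $l$ through it, Lemma \ref{recdoslados} applies to every interior point of $l$ whose $\omega$-limit is not a singularity — which is all but finitely many by Lemma \ref{muchassep} — so the return map is defined on a full-measure, dense subset of $l$. Using standard arguments (as in the proof of Lemma \ref{irrationaltorus} and the interval-exchange section) one upgrades $l$ to a quasi-global cross section $\gamma$ whose first return map is an interval exchange map $f$; density of every orbit of $\phi$ translates into density of every non-singular orbit of $f$, i.e. $f$ is minimal. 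Hence $\phi$ is a suspension of the minimal interval exchange map $f$.

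For $(2)\Rightarrow(3)$: if $\phi$ is a suspension of a minimal interval exchange map $f$ on a quasi-global cross section $\gamma$, then $\sing$ is finite by hypothesis on suspensions; $\Omega(\phi)=S$ because every orbit returns to $\gamma$ and $f$ is minimal, so every orbit is dense hence non-wandering; $\phi$ has no periodic orbits since a periodic orbit would give a periodic point of $f$, contradicting minimality (every orbit of $f$ is dense and $S^1$ is infinite). Finally $\phi$ has no saddle connections: a saddle connection would be a regular orbit with $\omega$- and $\alpha$-limits both singular, hence a non-dense orbit, again contradicting minimality of $f$.

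For $(3)\Rightarrow(1)$, assume $\sing$ is finite, $\Omega(\phi)=S$, and there are neither periodic orbits nor saddle connections; I must show every regular orbit is dense. Take a regular point $x$. Since there are no periodic orbits and no saddle connections, $\omega(x)$ is not a single singular point (otherwise $x$ would lie on a stable separatrix whose $\alpha$-limit, by the no-wandering hypothesis and finiteness of separatrices, would also be singular, forming a saddle connection). So by Lemma \ref{recdoslados} the positive orbit of $x$ returns to any transversal through $x$; combined with $\Omega(\phi)=S$ and the absence of periodic orbits, the closure of $\phi_{\R}(x)$ is an invariant set whose return map to a suitable cross section is a minimal interval exchange map, forcing $\overline{\phi_{\R}(x)}=S$. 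The main obstacle I anticipate is the recurrence argument making $\omega(x)$ fill the whole surface: one must rule out a proper closed invariant set other than $\sing$, and this is exactly where the no-saddle-connection hypothesis, via Lemma \ref{peixoto2} applied to the return map, does the work — controlling the boundary behaviour of the domain of the first return map so that it is all of the cross section up to finitely many points.
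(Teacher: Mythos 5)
Your implication $(3)\Rightarrow(1)$ contains a genuine error and then an acknowledged but unfilled gap. The error is the opening claim that for every regular point $x$ the set $\omega(x)$ is not a single singular point, justified by asserting that a stable separatrix would have singular $\alpha$-limit and hence be a saddle connection. There is no reason for the $\alpha$-limit of a stable separatrix to be singular: in every quasi-minimal flow with $\sing\neq\emptyset$ (e.g.\ the suspensions of minimal interval exchange maps that this very proposition is about) there are stable separatrices, and their $\alpha$-limit is the whole surface. If your claim were true, hypothesis (3) would force $\sing=\emptyset$ and the proposition would be nearly vacuous. The correct move, which is what the paper does, is to prove a dichotomy rather than an exclusion: if $\omega(x)\neq S$ for some regular $x$, then $\partial\omega(x)$ contains a regular point $y$ (since $\sing$ is finite and $S$ is connected), and applying Lemma \ref{recdoslados} to a one-sided transversal at $y$ pointing into the complement of $\omega(x)$ yields a return of $y$ into that complement, which is absurd because $\omega(x)$ is closed and invariant; the only escape is that $\omega(x)$ is a single singular point. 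The same dichotomy for $\alpha(x)$ plus the absence of saddle connections then shows that for each regular $x$ at least one of $\omega(x),\alpha(x)$ equals $S$, so every regular orbit is dense. Your closing paragraph correctly identifies that "ruling out a proper closed invariant set" is the crux, but the sentence about the return map being "a minimal interval exchange map, forcing $\overline{\phi_\R(x)}=S$" is circular: minimality of that return map is exactly what is to be proved.

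The other two implications are essentially workable. Your $(2)\Rightarrow(3)$ matches the paper except that you derive $\Omega(\phi)=S$ from minimality of $f$ where the paper uses preservation of Lebesgue measure, and you should justify "a saddle connection contradicts minimality" via the fact (Keane) that every orbit of a minimal interval exchange map is infinite, since a saddle connection corresponds to a finite orbit of $f$, not merely a non-dense one. Your $(1)\Rightarrow(2)$ takes a more self-contained route than the paper, which simply observes that quasi-minimal flows are Cherry flows and invokes Guti\'errez's construction of the quasi-global cross section; your version would need the "standard arguments" upgrading a local transversal to a quasi-global cross section to be actually carried out, which is the nontrivial content being outsourced.
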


\begin{proof}

\par($1\to 2$) It is easy to see that quasi-minimal flows are Cherry flows. Moreover, there exist $x\in S$ such that $\phi_{\R^+}(x)$
is dense. By \cite{Gutierre} (section 4) we have that $\phi$ is a suspension of a minimal interval exchange map. 

\par($2\to 3$) Since $f$ preserves the Lebesgue measure, $\Omega(\phi)=S$. The flow $\phi$ can not have periodic orbits because
$f$ is minimal. The singular set of $\phi$ is finite by definition of suspension. In \cite{Keane} it is shown that every orbit of
a minimal interval exchange map is infinite, then $\phi$ do not have saddle connections.

\par($3\to 1$) Suppose there exist a regular point $x\in S$ such that $\omega(x)$ is not the whole $S$. 
Since $S$ is connected we have that
$\partial\omega(x)\neq\emptyset$. Therefore
there exist a regular point $y\in\partial\omega(x)$. 
Take a flow box $U$ around $y$. 
Since $U\setminus \omega(x)\neq \emptyset$ we can suppose
that $y$ belongs to the frontier of a connected component of
$U\setminus \omega(x)$. Since $\omega(x)$ and its complement
on $S$ are invariant sets by the flow, we have that $\omega(x)$ is a singular point. If this were not the cases
we can apply Lemma
\ref{recdoslados} concluding that $y$ returns to the complement of
$\omega(x)$ which is an absurd. 

\par So we have shown that if $x$ is a regular point and $\omega(x) \neq S$ then $\omega(x)$ is a singular point. 
Arguing the same for $\alpha(x)$ and using the fact that there are no saddle connections, we have that the orbit of every regular
point is dense.
\end{proof}

\begin{cor}\label{quasiminimal}
Expansive flows on connected surfaces without saddle connections
are quasi-minimal.
\end{cor}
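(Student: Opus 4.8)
The plan is to reduce the statement to Proposition~\ref{minimal-iem}, whose condition~(3) --- namely that $\sing$ is finite, $\Omega(\phi)=S$, and $\phi$ has neither periodic orbits nor saddle connections --- is exactly equivalent to quasi-minimality. So, given an expansive flow $\phi$ on a connected compact surface $S$ with no saddle connections, I would simply verify these conditions one at a time and then quote the implication $(3)\Rightarrow(1)$ of that proposition.

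The verification is routine. The absence of saddle connections is the hypothesis. An expansive flow on a compact metric space has only finitely many singularities: expansiveness forces the singular set to consist of isolated points, as already observed in the proofs of Theorem~\ref{kequiv}, Proposition~\ref{Cherry} and Theorem~\ref{charact3}. Next, Proposition~\ref{wanderings} gives $\Omega(\phi)=S$, and Proposition~\ref{expnoper} gives that $\phi$ has no periodic points. Hence condition~(3) of Proposition~\ref{minimal-iem} is satisfied, and therefore $\phi$ is quasi-minimal.

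I do not expect any real obstacle here; every ingredient has already been established, and the corollary is essentially a bookkeeping statement. The only point worth keeping in mind is that one need not rule out the torus by hand: Proposition~\ref{minimal-iem} is stated without that restriction, and in any case Theorem~\ref{toposupexp} already shows that no expansive flow lives on the torus, while Remark~\ref{sing-boundary} together with the presence of saddle connections along boundary components in the constructions of Theorem~\ref{toposupexp} shows that a connected surface admitting an expansive flow with no saddle connections is closed. So the proof amounts to collecting Propositions~\ref{wanderings} and~\ref{expnoper} and the finiteness of $\sing$, and then appealing to Proposition~\ref{minimal-iem}.
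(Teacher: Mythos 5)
Your proof is correct and follows essentially the same route as the paper: the paper's one-line proof invokes Theorem~\ref{charact3} (whose direct implication is exactly the package of Propositions~\ref{wanderings}, \ref{expnoper} and the finiteness of $\sing$ that you assemble) together with the implication $(3)\Rightarrow(1)$ of Proposition~\ref{minimal-iem}. The closing remarks about the torus and boundary components are unnecessary but harmless.
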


\begin{proof}
It follows by Theorem \ref{charact3} and Proposition \ref{minimal-iem}.
\end{proof}

\section{Global Structure of Expansive Flows Of Surfaces}

\par A union of saddle connections can separate the dynamics of a flow on a surface. 
So if one cuts every saddle connection the surface may be disconnected.
In this section we will study this decomposition for expansive flows, obtaining irreducible sub-dynamics. 
Similar structure theorems are given in \cites{Maier,CoboGutierrez}.
First we give an example showing the main ideas of Theorem \ref{structure}.
 
\begin{ejp} \label{bitoroexp}
Consider an irrational flow on the torus. 
Take any orbit segment with endpoints $p$ and $q$. Put singularities 
in $p$ and $q$ and make a cut along this saddle connection. 
Now take a copy and glue the saddle connections in the boundary getting a bi-torus.
Let us show that the flow is expansive. There are two singular points and each one has index -1.
And since the orbits of the irrational flow on the torus are dense, we have that the union of the separatrices is dense in the bi-torus.
Now applying Theorem
\ref{charexpsup} we conclude that the flow is expansive. 
That example shows how the saddle connections separates the surface.
\end{ejp}

\begin{obs}
 In the proof of Theorem \ref{structure} 
we will need to \emph{collapse} a boundary component of the surface. Now we will show
that this can be done using just basic operations. 
\par First remove every singularity of index 0. Take a singular point $p$ 
in a component of the boundary $\gamma$ 
(by Remark \ref{sing-boundary} each boundary component contains at least one singular point of negative index). 
Consider a separatrix $\gamma$ of $p$ such that $\gamma$ with another separatrix of $p$ in the 
boundary they determine a hyperbolic sector of $p$. Put a singularity $q$ of index 0 in $s$. Make a cut along the saddle connection 
with extreme points $p$ and $q$. In this way $p$ splits in $r$ and $s$. Now either $r$ or $s$ is
of index 0. Suppose that it is $r$, as in figure \ref{colapsoborde}, and remove it. Now glue the saddle connections
that connects $q$ and $s$. Notice that $q$ can be removed. 
\par Repeating this procedure we remove the boundary components. 
The figure \ref{colapsoborde} shows this procedure in the special case where there is only one saddle connection 
in a boundary component, but it is work in any case. Also, in the figure we see that finally $s$ can be removed. This fact depends on 
how many interior separatrices had $p$ in the beginning.

\begin{figure}[h]
\input{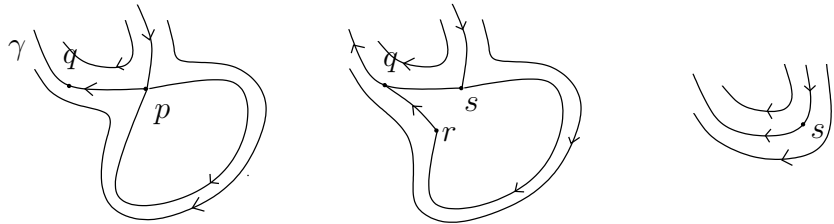}
\caption{Removing a saddle connection in the boundary.}
\label{colapsoborde}
\end{figure}

\end{obs}

\begin{teo}\label{structure}
 Every expansive flow on a compact surface can be obtained in the following way:
\begin{enumerate}
 \item Take $f_1,\dots,f_n$ minimal interval exchange maps.
\item Consider their suspensions $\phi_1,\dots,\phi_n$ on the surfaces\\$S_1,\dots,S_n$.
\item Do a finite number of basic operations on $\cup_{i=1}^n S_i$.
\end{enumerate}
\end{teo}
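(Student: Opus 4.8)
The plan is to start from an expansive flow $\phi$ on a compact surface $S$ and strip away, by a finite sequence of basic operations, every feature that the suspension of a minimal interval exchange map cannot exhibit --- singularities of index $0$, saddle connections, and boundary --- until one reaches a (possibly disconnected) \emph{closed} surface carrying a flow each of whose connected components is, by Proposition \ref{minimal-iem}, the suspension of a minimal interval exchange map. Running the resulting sequence of operations backwards then exhibits $\phi$ in the claimed form, because the inverse of each basic operation used is again a finite composition of basic operations.

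In order, the steps would be the following. (i) By Theorem \ref{expsinsing2}, pass from $\phi$ to a flow $\phi'$ obtained by removing every singularity of index $0$; it is again expansive, all its singularities have negative index, and re-adding those singularities at the end is a basic operation, so it suffices to treat $\phi'$. (ii) Since $\phi'$ is expansive, $\sing$ is finite and, by Proposition \ref{singularities}, each singularity has finitely many separatrices, so $\phi'$ has only finitely many saddle connections; cut along each saddle connection not contained in the boundary. This is a finite sequence of basic operations, after which every remaining saddle connection lies in the boundary, and by the argument in the proof of the Corollary following Theorem \ref{charact3} it leaves the non-wandering set equal to the whole surface, creates no periodic orbit, and keeps the set of singularities finite. (iii) Collapse every boundary component using the procedure described in the Remark preceding this theorem, which is a finite composition of basic operations; this removes the boundary saddle connections and, since gluing two boundary saddle connections produces a regular interior orbit rather than a new saddle connection (and the auxiliary index-$0$ singularities it introduces are immediately removed), it creates no new saddle connection. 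One arrives at a flow $\psi$ on a closed surface with $\sing$ finite, non-wandering set the whole surface, no periodic orbits and no saddle connections. (iv) On each connected component $S_i$ the restriction $\phi_i$ of $\psi$ still satisfies these four properties, so by the implication $(3)\Rightarrow(2)$ of Proposition \ref{minimal-iem} it is the suspension of a minimal interval exchange map $f_i$.

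Finally, one runs the construction forwards: take $f_1,\dots,f_n$, form their suspensions $\phi_1,\dots,\phi_n$ on $S_1,\dots,S_n$, and undo the operations of steps (iii), (ii), (i) in reverse order. Undoing the collapse of a boundary component is the addition of a boundary, which is a finite composition of basic operations by Remark \ref{addboundary}; undoing a cut along a saddle connection is a gluing of two boundary saddle connections; undoing the removal of an index-$0$ singularity is its addition. All of these are basic operations, so $\phi$ is obtained exactly as in the statement of Theorem \ref{structure}.

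The step I expect to be the main obstacle is the bookkeeping in (ii) and (iii): one must check that every intermediate operation --- in particular those hidden inside the collapse-a-boundary procedure --- really is a basic operation, that the whole process terminates (finitely many saddle connections, finitely many boundary components), and, crucially, that throughout it one never creates a periodic orbit or a wandering point and never creates a new saddle connection, so that the closed pieces reached in (iv) genuinely satisfy hypothesis $(3)$ of Proposition \ref{minimal-iem}. Equivalently, the core point is that the inverse of each basic operation used is again a finite composition of basic operations, which is exactly what Remark \ref{addboundary} and the Remark preceding this theorem are designed to provide.
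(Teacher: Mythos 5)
Your proposal is correct and follows essentially the same route as the paper: cut along every interior saddle connection, collapse the resulting boundary components via the basic-operation procedure of the preceding Remark, recognize each closed piece as a suspension of a minimal interval exchange map by Proposition \ref{minimal-iem}, and reverse the operations. Your version is more explicit about removing index-$0$ singularities first and about the reversibility bookkeeping, but these points are already implicit in the paper's argument (the collapse Remark begins by removing index-$0$ singularities, and Remark \ref{addboundary} supplies the inverse operations).
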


\begin{proof} Take each interior saddle connection and make a cut on the surface through this orbit.
What we get is a surface with boundary that may not be connected.
Let $S_1,\dots,S_N$ be such connected components and let $\phi_i$ be the flow on $S_i$
induced by $\phi$. By construction the saddle connections are in the boundary of $S_i$.
Now collapsing each boundary component to a singular point we get flows without wandering points, without periodic orbits
and without saddle connections. This are suspensions of minimal interval exchange maps by Proposition \ref{minimal-iem}.
\end{proof}

\par  Conversely, if the procedure of Theorem \ref{structure} gives you a connected surface different from the torus, 
then the flow is expansive by Theorem \ref{charact3}.

\section{Rational Billiards}

\par Consider a polygon $P\subset \R^2$ with angles $\theta_i\in\Q\pi$, $i=1,\dots, n$. In the literature it is called
a \emph{rational polygon}.
Let $V_1,...,V_n$ be its corners and denote by $l_1,...,l_n$ its sides. 
Consider $n$ lines $r_1,...,r_n$, $0\in r_i$, parallel to each $l_i$ respectively and 
let $S_1,...,S_n$ be the reflections associated with the lines $r_i$. 
Let $G$ be the group of isometries generated by $\{S_i\}$. 
On the set $P\times G$ we define an equivalence relation generated by the following
property: if $x\in l_i\subset\partial P$ then $(x,g)\simeq (x,S_ig)$. 
Let $S$ be the quotient space $P\times G/\simeq$. Denote by $p_i$ the class of the corner $V_i$ for all $i=1,\dots,n$
and define $\sing=\{p_1,\dots,p_n\}$. 
Endowed with the quotient topology it is known that $S$ is a connected closed surface (see \cite{ZK} Proposition 1).
Also, the planar metric of $P$ induces a flat metric in $S\setminus \{V_1,\dots,V_n\}$ and 
the parallel transport do not depends on the curve (see \cite{ZK} Proposition 2). 

\par With this properties we can construct a flow on $S$. 
Fix a direction $v\in\R^2$, $||v||=1$. Define a vector field $Y$ on $S\setminus \sing$, with the parallel transport of $v$.
Consider a non negative smooth function $\rho\colon S\to \R$ 
vanishing only in the singular points.
Define the vector field $X$ in the whole surface $S$ 
as $X(x)=\rho(x)Y(x)$. 
Let $\phi_v$ be the flow on $S$ associated with the vector field $X$.

\begin{teo}\label{billar}
The associated flow $\phi_v$ is expansive    
if and only if $S$ is not the torus and 
there are no periodic orbit in the polygon with initial direction $v$. 
\end{teo}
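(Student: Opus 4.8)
The plan is to reduce Theorem \ref{billar} to the characterization given in Theorem \ref{charact3}, using the fact that the billiard construction produces a flow on a compact surface whose singularities are precisely the classes $p_1,\dots,p_n$ of the corners. First I would analyze the local behavior of $\phi_v$ at each $p_i$: since the angle $\theta_i\in\Q\pi$ and $S$ is built by unfolding along the group $G$, the cone angle at $p_i$ is an integer multiple $k_i\pi$ of $\pi$ (here $k_i\geq 1$ since the polygon is genuine); the flat structure then shows $p_i$ is a singularity of saddle type with $2k_i$ separatrices, hence of index $1-k_i\leq 0$. In particular every corner with $\theta_i\neq\pi$, i.e.\ $k_i\geq 2$, gives a singularity of strictly negative index, and the ``removable'' ones of index $0$ (if $k_i=1$, i.e.\ $\theta_i=\pi$, which in a genuine polygon does not occur, or appears only through artificial subdivisions) are handled by Theorem \ref{expsinsing2}. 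Note that the reparametrizing function $\rho$ does not change orbits, so $\phi_v$ and the unit-speed flat flow have the same orbit structure; expansiveness is a topological-equivalence property (Definition \ref{expflow} quantifies over all $h\in\reparam$), so I may work with either.

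Next I would set up the equivalence. By Theorem \ref{charact3}, if $S$ is not the torus then $\phi_v$ is expansive if and only if $\Omega(\phi_v)=S$, $\phi_v$ has no periodic orbits, and $\sing$ is finite. The set $\sing=\{p_1,\dots,p_n\}$ is finite by construction, so this reduces to the two conditions: no periodic orbits, and $\Omega(\phi_v)=S$. For the first, periodic orbits of $\phi_v$ correspond exactly to periodic billiard trajectories in $P$ with initial direction $v$ (a closed orbit on $S$ projects to a trajectory in $P$ returning to its starting point and direction, and conversely), so ``no periodic orbits'' is literally the hypothesis ``no periodic orbit in the polygon with initial direction $v$.'' For the second, I claim $\Omega(\phi_v)=S$ \emph{always} holds for these billiard flows: the flat metric is preserved by $\phi_v$ (up to the reparametrization $\rho$, but the flat area is an invariant measure for the flat flow), so the flat flow preserves a finite Borel measure of full support on $S\setminus\sing$, and by Poincar\'e recurrence almost every point is recurrent, whence $\Omega(\phi_v)=S$. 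Thus, assuming $S$ is not the torus, $\phi_v$ is expansive iff there are no periodic billiard trajectories with direction $v$.

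It remains to deal with the case $S=$ torus and to confirm the ``only if'' direction cleanly. If $S$ is the torus, then Theorem \ref{toposupexp} (or the remark following it) says the torus admits no expansive flow, so $\phi_v$ is not expansive; this matches the stated equivalence since the right-hand side explicitly excludes $S$ being the torus. Conversely, if $\phi_v$ is expansive then $S$ is not the torus by Theorem \ref{toposupexp}, and by Proposition \ref{expnoper} $\phi_v$ has no periodic points, hence no periodic billiard trajectory with direction $v$. Putting these together with the previous paragraph gives both implications.

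The main obstacle I anticipate is the careful identification of the local model at the corners—specifically verifying that the cone angle at $p_i$ is $k_i\pi$ with $k_i$ a positive integer and that this makes $p_i$ a saddle of index $1-k_i$, so that in particular $p_i$ is never a source or sink and never creates an asymptotically stable separatrix. This is where I would lean on \cite{ZK} (Propositions 1 and 2) for the surface and flat-metric structure, and on the hyperbolic-sector analysis already recalled in Section 3 (following \cite{Hartman}) to read off the index. A secondary technical point is the bijection between $\phi_v$-periodic orbits and periodic billiard paths of direction $v$, including the degenerate case of a trajectory that hits a corner: such a trajectory is not periodic but runs into a singularity, and one must check these do not interfere—they don't, because a saddle separatrix is not a periodic orbit and its presence is already allowed by the characterization. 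Once these local and combinatorial facts are in place, the theorem follows by directly invoking Theorems \ref{charact3} and \ref{toposupexp}.
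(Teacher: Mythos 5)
Your proposal is correct and follows essentially the same route as the paper: the paper's proof is exactly the reduction to Theorem \ref{charact3}, observing that $\Omega(\phi_v)=S$ because of the invariant measure of full support from \cite{ZK}, with the finiteness of $\sing$ and the correspondence between periodic orbits of $\phi_v$ and periodic billiard trajectories of direction $v$ left implicit. Your additional care with the torus case (via Theorem \ref{toposupexp}) and the local cone-angle analysis at the corners is sound but not needed for the application of Theorem \ref{charact3}, which only requires $\Omega(\phi_v)=S$, no periodic orbits, and $\sing$ finite.
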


\begin{proof}
\par We have that $\Omega(\phi_v)=S$ because there is an invariant measure that is positive on open sets (see \cite{ZK}). 
Hence we can apply 
Theorem \ref{charact3} to conclude.
\end{proof}

\par It is known that the surface $S$ is the torus if and only if
the polygon has all of its angles of the form $\pi/n$ with $n=2,3,4,...$. 
This polygons are: rectangles and triangles $(\frac \pi3,\frac\pi 3,\frac\pi 3)$, 
$(\frac\pi 2,\frac\pi 4,\frac \pi 4)$ and $(\frac\pi 2,\frac\pi 3,\frac\pi 6)$.

\par 
The previous result gives us a family of examples of expansive flows.
Fix a polygon $P$ and notice that the set of direction $v$
such that there exist a periodic orbit with initial direction $v$, is countable.
Therefore, in order to get an expansive flow, just take $v$ without periodic orbits.

\begin{bibdiv}
\begin{biblist}

\bib{BW}{article}{
author={R. Bowen and P. Walters}, title={Expansive One-Parameter
Flows}, journal={J. Diff. Eq.}, year={1972}, pages={180--193},
volume={12}}

\bib{CoboGutierrez}{article}{
    AUTHOR = {M. Cobo},
    AUTHOR = {C. Gutierrez},
    AUTHOR = {J. Llibre},
     TITLE = {Flows without wandering points on compact connected surfaces},
   JOURNAL = {Trans. Amer. Math. Soc.},
      VOLUME = {362},
      YEAR = {2010},
    NUMBER = {9},
     PAGES = {4569--4580},}

\bib{GKT}{article}{
author={G. Galperin},
author={T. Kruger},
author={S. Troubetzkoy},
title={Local instability of orbits in polygonal and polyhedral billiards}, journal={Comm. Math. Phys.}, year={1995}, pages={463-473},
volume={169}}

\bib{Gutierre}{article}{
author={C. Gutierrez},
title={Smoothability of Cherry flows on two-manifolds},
journal={Lecture Notes in Math.},
volume={1007},
year={1983},
pages={308--331}}

\bib{Gutierrez}{article}{
author={C. Gutierrez},
title={Smoothing continuous flows on two-manifolds and recurrences},
journal={Ergod. Th \& Dynam. Sys.},
volume={6},
year={1986},
pages={17--14}}

\bib{Hartman}{book}{
author={P. Hartman},
title={Ordinary Differential Equations},
publisher={John Wiley \& Sons},
year={1964}}

\bib{H}{article}{
author={K. Hiraide},
title={Expansive Homeomorphisms of Compact Surfaces are Pseudo Anosov},
journal={Osaka Journal of Math.},
volume={27},
number={1},
year={1990},
pages={117--162}}

\bib{Hirsch}{book}{
author={M. W. Hirsch},
title={Differential Topology},
publisher={Springer-Verlag},
year={1976}}

\bib{JU}{article}{
author={J. F. Jakobsen and W. R. Utz}, title={The non-existence of
expansive homeomorphisms on a closed $2$-cell}, journal={Pacific
J. Math.}, year={1960}, volume={10}, number={4},
pages={1319--1321}}

\bib{Keane}{article}{
author={M. Keane}, title={Interval exchange transformations}, 
journal={Math. Z.}, 
year={1975}, 
volume={141}, 
pages={25--31}}

\bib{K}{article}{
author={M. Komuro}, title={Expansive properties of Lorenz
attractors}, journal={The Theory of dynamical systems and its
applications to nonlinear problems}, year={1984}, place={Kyoto},
pages={4--26}, publisher={World Sci. Singapure}}

\bib{L}{article}{
author={J. Lewowicz},
title={Expansive homeomorphisms of surfaces},
journal={Bol. Soc. Bras. Mat.},
year={1989},
volume={20},
number={1},
pages={113--133}}

\bib{LG}{article}{author={He Lianfa and Shan Guozhuo}, title={The Nonexistence of Expansive Flow on a Compact 2-Manifold}, journal={Chinese Annals of Mathematics},
volume={12}, number={2}, pages={213-218}, year={1991} }

\bib{Maier}{article}{
 title = {Trajectories on orientable surfaces},
     author = {A. G. Maier},
     journal = {Sb. Math.},
     volume = {12},
     number = {1},
     pages = {71--84},
     year = {1943},}

\bib{Markley}{article}{
 title = {On the Number of Recurrent Orbit Closures},
     author = {N. G. Markley},
     journal = {Proceedings of the American Mathematical Society},
     volume = {25},
     number = {2},
     pages = {413--416},
     year = {1970},
     publisher = {American Mathematical Society},}

\bib{Oka}{article}{
author={M. Oka}, title={Expansiveness of real flows},
journal={Tsukuba J. Math}, year={1990}, volume={14}, number={1},
pages={1--8} }

\bib{W}{article}{
author={H. Whitney},
title={Regular Family of Curves},
journal={Annals of Mathematics},
year={1933},
volume={34},
number={2},
pages={244--270}}

\bib{ZK}{article}{
author={A. N. Zemlyakov and A. B. Katok}, title={Topological
transitivity of billiards in polygons}, journal={Mat. Zametki},
volume={18}, number={2}, pages={291--300}, year={1975}}

\end{biblist}
\end{bibdiv}

\end{document}